\DeclareSymbolFontAlphabet{\mathbb}{AMSb}
\DeclareSymbolFontAlphabet{\mathbbl}{bbold}
\newenvironment{claimproof}[1][Proof of Claim]{\begin{proof}[#1]}{\end{proof}}
\newtheorem{lemma}{Lemma}[section]
\newtheorem*{lemma*}{Lemma}
\newtheorem{theorem}[lemma]{Theorem}
\newtheorem*{theorem*}{Theorem}
\newtheorem{corollary}[lemma]{Corollary}
\newtheorem{proposition}[lemma]{Proposition}
\newtheorem*{proposition*}{Proposition}
\newtheorem*{fact*}{Fact}
\newtheorem*{notation*}{Notation}
\newtheorem*{conventions*}{Conventions}
\newtheorem{remark}[lemma]{Remark}
\newtheorem*{remark*}{Remark}
\newtheorem*{corollary*}{Corollary}
\newtheorem*{conjecture*}{Conjecture}
\newtheorem*{problem*}{Problem}
\newtheorem{question}{Question}
\newtheorem*{question*}{Question}
\theoremstyle{definition}
\newtheorem{example}{Example}
\newtheorem*{example*}{Example}
\newtheorem{definition}[lemma]{Definition}
\newtheorem*{definition*}{Definition}
\theoremstyle{remark}
\newtheorem{claim}{Claim}
\newtheorem*{claim*}{Claim}
\newtheorem*{construction*}{Construction}
\newtheorem*{exercise*}{Exercise}
\numberwithin{equation}{section}
\newcommand{\N}{\mathbb{N}}
\newcommand{\Q}{\mathbb{Q}}
\newcommand{\set}[1]{\left\{ #1 \right\}}
\newcommand{\bs}{\backslash}
\newcommand{\meet}{\wedge}
\newcommand{\join}{\vee}
\renewcommand\AA{{\mathcal A}}
\newcommand\KK{{\mathcal K}}
\newcommand\LL{{\mathcal L}}
\newcommand\RR{{\mathcal R}}
\newcommand\UU{{\mathcal U}}
\newcommand\bbzero{\mathbbl 0} 
\newcommand\bbone{\mathbbl 1}  
\newcommand\fraisse{Fra\"\i ss\'e }
\DeclareMathOperator{\Lift}{Lift}
\begin{document}
	\title{Ramsey Expansions of $\Lambda$-Ultrametric Spaces}
	\author{Samuel Braunfeld}
	
	\begin{abstract}
	For a finite lattice $\Lambda$, $\Lambda$-ultrametric spaces are a convenient language for describing structures equipped with a family of equivalence relations.
	When $\Lambda$ is finite and distributive, there exists a generic $\Lambda$-ultrametric space, and we here identify a family of Ramsey expansions for that space. This then allows a description the universal minimal flow of its automorphism group, and also implies the Ramsey property for all of the homogeneous structures constructed in \cite{Braun}. A point of technical interest is that our proof involves classes with non-unary algebraic closure operations. As a byproduct of some of the concepts developed, we also arrive at a natural description of the known homogeneous structures in a language consisting of finitely many linear orders, thus completing one of the goals of \cite{Braun}.
	\end{abstract}
	
\keywords{\fraisse theory, Ramsey theory, infinite permutations, extreme amenability, universal mimimal flow}
\subjclass[2010]{03C13, 03C15, 03C50, 05D10, 37B05}

\maketitle

\section{Introduction}
   \newtheorem*{theorem:UMF}{\bf{Theorem \ref{theorem:UMF}}}
    \newtheorem*{corollary:RamseyPermutations}{\bf{Corollary \ref{corollary:RamseyPermutations}}}
     \newtheorem*{theorem:main}{\bf{Theorem \ref{theorem:mainthm}}}

 In \cite{Braun}, $\Lambda$-ultrametric spaces appeared as a convenient language for working with structures equipped with of a family of equivalence relations. There it was proven that, for $\Lambda$ finite and distributive, there exists a generic $\Lambda$-ultrametric space, in the sense of \fraisse theory. For example, when $\Lambda$ is a chain, this is the usual homogeneous ultrametric space, as in \cite{UM}. These spaces were used to produce homogeneous structures in a language consisting of linear orders, with lattice of $\emptyset$-definable equivalence relations isomorphic to $\Lambda$. The passage from $\Lambda$ to the language of linear orders ensured that every equivalence relation corresponding to a meet-irreducible in $\Lambda$ was made convex with respect to at least one of the linear orders added.
 
 This process of adding linear orders subject to convexity requirements looked suspiciously like what one would do in order to produce a Ramsey lift of the generic $\Lambda$-ultrametric space. This prompted the question whether the classes thus produced were Ramsey classes, and the related question of describing the universal minimal flow of the automorphism group of the generic $\Lambda$-ultrametric space.
 
 It soon became apparent that, rather than working with linear orders on a $\Lambda$-ultrametric space, there were technical advantages to working with certain partial orders on substructures of quotients, which we term \textit{subquotient orders}. In this paper, we define a lift by particular subquotient orders $\vec \AA_\Lambda^{min}$ of the class $\AA_\Lambda$ of all finite $\Lambda$-ultrametric spaces, and arrive at the following theorem.
 
 \begin{theorem:UMF}
 Let $\Lambda$ be a finite distributive lattice, $\AA_\Lambda$ the class of all finite $\Lambda$-ultrametric spaces, $\Gamma$  the \fraisse limit of $\AA_\Lambda$, and $\vec \Gamma^{min} = {(\Gamma, (<_{E_i})_{i=1}^n)}$ the \fraisse limit of $\vec \AA_\Lambda^{min}$. Then 
 \begin{enumerate}
 \item $\vec \AA_\Lambda^{\min}$ is a Ramsey class and has the expansion property relative to $\AA_\Lambda$.
 \item The logic action of $\text{Aut}(\Gamma)$ on $\overline{\text{Aut}(\Gamma) \cdot (<_{E_i})_{i=1}^n}$ is the universal minimal flow of $\text{Aut}(\Gamma)$.
 \end{enumerate}
 \end{theorem:UMF}
 
 The above theorem gives an explicit description of the universal minimal flow of $\text{Aut}(\Gamma)$ as isomorphic to the logic action on the full product of the following factors indexed by $i \in [n]$, where $n$ is the number of meet-irreducibles in $\Lambda$: the space of linear orders on $\Gamma$ if $<_{E_i}$ satisfies a certain condition (its top relation is $\bbone$), and otherwise the full infinite Cartesian power of that space. In particular, the universal minimal flow is metrizable.
 
 Most of the work in this theorem is directed toward establishing the Ramsey property. In the main theorem of this paper, we prove the Ramsey property for the broader class of all \textit{well-equipped lifts} of $\AA_\Lambda$, a condition used to capture the interdefinability of an expansion by generic subquotient orders with some expansion by linear orders.
 
  \begin{theorem:main} [Main Theorem]
  Let $\Lambda$ be a finite distributive lattice, $\AA_\Lambda$ the class of all finite $\Lambda$-ultrametric spaces, and $\vec \AA_\Lambda$ a well-equipped lift of $\AA_\Lambda$. Then $\vec \AA_\Lambda$ is a Ramsey class.
  \end{theorem:main}
  
  This theorem also provides a positive answer to the question of whether the amalgamation classes produced in \cite{Braun} are Ramsey classes.
  
  \begin{corollary:RamseyPermutations}
  The amalgamation classes corresponding to all the homogeneous finite-dimensional permutation structures constructed in \cite{Braun}*{Proposition 3.10} are Ramsey classes.
  \end{corollary:RamseyPermutations}
  
  Our main theorem is proven using tools from Hubi\v{c}ka and Ne{\v{s}}etril \cite{HN}. In particular, we use a combination and generalization of the encoding techniques used there to prove Ramsey theorems for the free product of Ramsey classes and for structures that have a chain of definable equivalence relations. 
   
   Perhaps the main point of technical interest is this: since we are dealing with an arbitrary finite distributive lattice $\Lambda$ of equivalence relations rather than just a chain of such, the algebraic closure operation in the structures we consider is non-unary, that is the algebraic closure of a set is not determined by the algebraic closures of the elements of the set. A non-unary algebraic closure significantly complicates applying the theorems of \cite{HN}, and consequently few classes with a non-unary algebraic closure have been proven to be Ramsey classes, although some examples were recently given in \cite{HN2}.
   
   The main point in the analysis of this closure is to show that, in an appropriate category, the closure of a finite set is finite. Rather than analyzing the closure operation explicitly, we derive this from the algebraic closure operation on imaginary elements in the generic $\Lambda$-ultrametric space (as in Lemma \ref{lemma:closure0}).
   
   Another point of considerable technical interest is our use of what we call a \textit{quantifier-free reinterpretation}, a generalization of the argument appearing in \cite{Bod}*{Section 4}, to transfer the Ramsey property between classes. The natural class our arguments would be carried out in has a linear order satisfying many constraints, and the reinterpretation technique allows us to argue in a class where the linear order is more nearly generic, thereby avoiding much bookkeeping.
  
   As a bonus, the development of the concepts of subquotient order and well-equipped lift enables us to complete the project initiated in \cite{Braun} (inspired by a question of Cameron \cite{Cameron}) of producing a census of "natural" homogeneous structures in a language consisting of finitely many linear orders, which we call \textit{finite-dimensional permutation structures}. 
   
   Although the main construction from \cite{Braun} was sufficient to produce, for every finite distributive lattice $\Lambda$, some homogeneous finite-dimensional permutation structure with lattice of $\emptyset$-definable equivalence relations isomorphic to $\Lambda$, there were already known finite-dimensional permutation structures that it could not produce. However, when the construction is modified to work with subquotient orders rather than linear orders, all known examples are captured, including those in the recently completed 3-dimensional classification \cite{3dim}. This naturally prompts the following question.
   
   \begin{question}
   Is every homogeneous finite-dimensional permutation structure with lattice of $\emptyset$-definable equivalence relations isomorphic to $\Lambda$ interdefinable with the Fra\"\i ss\'e limit of some well-equipped lift of the class of all finite $\Lambda$-ultrametric spaces?
   \end{question}
   
\textbf{Acknowledgements} I would like to thank Jan Hubi\v{c}ka informing me of the results of \cite{HN} as soon as it was prepared, as well as for subsequent discussion of the techniques used therein. I would also like to thank Gregory Cherlin for our many discussions on the material in this paper.
   
\section{Ramsey Classes and multi-amalgamation Classes}
Let $\KK$ be a class of structures closed under isomorphism. Given $A, B \in \KK$, let $\binom B A$ denote the set of substructures of $B$ that are isomorphic to $A$. We will say $\KK$ is a \textit{Ramsey class} if for any $n \in \N$ and $A, B \in \KK$, there is a $C \in \KK$ such that if $\binom C A$ is colored with $n$ colors, there is a $\widehat B \in \binom C B$ such that $\binom {\widehat B} A$ is monochromatic (we will often just say $\widehat B$ is monochromatic).

We now give an exposition of Theorem 2.2 in \cite{HN}, which provides sufficient conditions for proving a class is Ramsey. By \cite{Nes}, when a Ramsey class has the joint embedding property, it also has the amalgamation property, and this theorem of \cite{HN} is particularly useful for proving a Ramsey theorem for a class whose Fra\"\i ss\'e limit has a non-degenerate algebraic closure operation, in the sense that there exist sets which are not their own algebraic closure.

\begin{definition}
An $L$-structure $A$ is \textit{irreducible} if for every distinct $x, y \in A$, there is some $R \in L$ and a tuple $\vec t$ containing $x, y$ such that $R(\vec t)$ holds in $A$.

A homomorphism $f: A \to B$ is a \textit{homomorphism-embedding} if $f$ restricted to any irreducible substructure of $A$ is an embedding, i.e. $f$ is injective and for any $R \in L$, $R(x_1, ..., x_{r}) \Leftrightarrow R(f(x_1), ..., f(x_{r}))$, where $r$ is the arity of $R$.

Given an $L$-structure $C$ and a class $\KK$ of $L$-structures, we say $C'$ is a \textit{$\KK$-completion} of $C$ if $C' \in \KK$ and there is a homomorphism-embedding $f: C \to C'$.

Given an $L$-structure $C$, an irreducible $B \subset C$, and a class $\KK$ of $L$-structures, we say $C'$ is a \textit{$\KK$-completion of $C$ with respect to copies of $B$} if $C'$ is an irreducible $\KK$-structure and there is a function $f:C \to C'$ such that $f$ restricted to any $\hat B \in \binom C B$ is an embedding.
\end{definition}

\begin{definition}
Given a language $L$, a \textit{closure description} is a set of pairs $(R^U, B)$, where $R^U \in L$ is an $n$-ary relation, and $B$ is a non-empty irreducible $L$-structure on the set $\set{1, ..., m}$ for some $m \leq n$. We call $R^U$ a \textit{closure relation}, and the corresponding structure $B$ its \textit{root}.
\end{definition}

\begin{definition}
Given an $L$-structure $A$, an $n$-ary relation $R \in L$, and $k \leq n$, the \textit{$R$-out-degree} of a $k$-tuple $(x_1, ..., x_k) \in A^k$ is the number of tuples $(x_{k+1}, ..., x_n) \in A^{n-k}$ such that $R(x_1, ..., x_n)$ holds in $A$.  

Given a closure description $\UU$, we say that a structure $A$ is \textit{$\UU$-closed} if, for every $(R^U, B) \in \UU$, the $R^U$-out-degree of any tuple $\vec t$ of elements of $A$ is 1 if $\vec t$ is an embedding of $B$ into $A$, and 0 otherwise.
\end{definition}

Thus, in a $\UU$-closed structure, a closure relation can be thought of as a function assigning additional points to each copy of its root. The strong amalgamation condition in the following definition ensures that, in our applications, the closure relations are such that these functions generate the algebraic closure in the Fra\"\i ss\'e limit.

We are now ready for the main definition and theorem.

\begin{definition}
Let $\RR$ be a Ramsey class of finite irreducible $L$-structures, and let $\UU$ be a closure description in $L$. We say that a subclass $\KK \subset \RR$ is an \textit{$(\RR, \UU)$-multi-amalgamation class} if:
\begin{enumerate}
\item $\KK$ consists of finite $\UU$-closed $L$-structures.
\item $\KK$ is closed under taking $\UU$-closed substructures.
\item $\KK$ has strong amalgamation.
\item \textbf{Locally finite completion property}: Let $B \in \KK$ and $C_0 \in \RR$. Then there exists an $n=n(B, C_0)$ such that for any $\UU$-closed $L$-structure $C$ that satisfies the conditions below, there exists a structure $C'$ that is a $\KK$-completion of $C$ with respect to copies of $B$. The conditions required on $C$ are as follows.
\begin{enumerate}
\item $C_0$ is an $\RR$-completion of $C$.
\item Every substructure of $C$ with at most $n$ elements has a $\KK$-completion.
\end{enumerate}   
\end{enumerate} 
\end{definition}

\begin{theorem} [\cite{HN}*{Theorem 2.2}] \label{theorem:multiamalg}
Let $\RR$ be a Ramsey class. Then every $(\RR, \UU)$-multi-amalgamation class is a Ramsey class.
\end{theorem}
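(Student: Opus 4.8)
The plan is to reduce the statement to the structural Ramsey theorem for the ambient class $\RR$ by means of a \emph{partite construction} in the style of Ne\v{s}et\v{r}il and R\"odl, and then to use the locally finite completion property to transport the resulting Ramsey object from $\RR$ down into $\KK$. Fix $A, B \in \KK$ and a number of colours $k$. Since $\KK \subseteq \RR$ and the members of $\RR$ are finite irreducible $L$-structures, both $A$ and $B$ are finite irreducible; in particular, any copy of $A$ contained in a copy of $B$ is an irreducible substructure, hence is respected (mapped isomorphically onto a copy of $A$) by every homomorphism-embedding.

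First I would apply the Ramsey property of $\RR$ to fix $C_0 \in \RR$ with $C_0 \longrightarrow (B)^A_k$, and set $n = n(B, C_0)$, the bound furnished by clause (4) of the definition of an $(\RR,\UU)$-multi-amalgamation class. The goal becomes the construction of a finite $\UU$-closed $L$-structure $D$ such that:
\begin{enumerate}
\item $D \longrightarrow (B)^A_k$, with copies of $A$ and $B$ taken as substructures of $D$;
\item $C_0$ is an $\RR$-completion of $D$;
\item every substructure of $D$ on at most $n$ vertices has a $\KK$-completion.
\end{enumerate}
Given such a $D$, the locally finite completion property provides $C' \in \KK$ together with a map $f : D \to C'$ whose restriction to each copy of $B$ in $D$ is an embedding. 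Now given any $k$-colouring of $\binom{C'}{A}$, colour a copy $\widehat A$ of $A$ sitting inside a copy of $B$ in $D$ by the colour of $f(\widehat A)$, which is a genuine copy of $A$ in $C'$ by the first paragraph; by (1) there is a copy $\widehat B$ of $B$ in $D$ with $\binom{\widehat B}{A}$ monochromatic, and as $f$ maps $\widehat B$ isomorphically onto a copy of $B$ in $C'$ it maps $\binom{\widehat B}{A}$ bijectively onto $\binom{f(\widehat B)}{A}$, so $f(\widehat B)$ is monochromatic. Hence $\KK$ is Ramsey.

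The construction of $D$ is the partite construction. Put $a = |A|$ and work with $a$-partite $L$-structures (vertex set split into parts $X_1,\dots,X_a$, every irreducible substructure transversal), with transversal copies of $A$; the copies of $B$ are handled through the finitely many $a$-partite structures $(B,\chi)$ in which every copy of $A$ becomes transversal. The engine is the \emph{partite lemma}: for every $a$-partite ``picture'' $\mathbf P$ whose underlying structure admits an $\RR$-completion, there is an $a$-partite $\mathbf R$ with the same property and $\mathbf R \longrightarrow (\mathbf P)^{\mathbf A}_k$ for the transversal copy $\mathbf A$ of $A$; this is obtained from the Ramsey property of $\RR$ by filling in the parts one at a time. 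Beginning with a partite blow-up $P^0$ of $C_0$, one enumerates the copies inside $P^0$ of the partitioned versions of $B$ as $\mathbf B^1,\dots,\mathbf B^M$ and builds $P^0, P^1, \dots, P^M$: at stage $i$ one applies the partite lemma to $P^{i-1}$, then amalgamates one copy of $P^{i-1}$ over each copy of $\mathbf B^i$ inside the resulting Ramsey object. The invariants maintained are: (a) $C_0$ remains an $\RR$-completion of $P^i$ (using amalgamation in $\RR$, available by Ne\v{s}et\v{r}il's theorem since $\RR$ is Ramsey), which gives (2) for $D := P^M$; (b) $\UU$-closedness, adjoined fibrewise --- possible because each closure root is irreducible, hence transversal, so its copies are confined within single ``copies'' of the previous stage; and (c) a local-control property forcing every bounded irreducible substructure of $P^i$ to embed into $B$ or into $P^{i-1}$, from which strong amalgamation of $\KK$ yields a $\KK$-completion of every substructure of $D$ on at most $n$ vertices, i.e.\ (3). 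Property (1) for $D$ then follows from the standard ``last copy'' argument of the partite method.

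The main obstacle I anticipate is exactly the simultaneous maintenance of invariants (b) and (c): the closure relations $R^U$ attach a forced vertex to every copy of their root, and one must verify that neither the amalgamation hidden inside the partite lemma nor the gluing over the copies of $\mathbf B^i$ ever identifies or collides these forced vertices so as to create an $n$-vertex substructure with no $\KK$-completion --- this is where strong amalgamation of $\KK$, $\UU$-closedness of its members, and the specific choice of $n$ (large enough that every relevant local configuration already occurs inside $B$ or $C_0$) are all needed in concert. A lesser, but still nontrivial, point is the partite lemma itself: one must check that the transversal-copy Ramsey statement for $a$-partite structures with an $\RR$-completion genuinely reduces to finitely many applications of the Ramsey property of $\RR$, using irreducibility at each stage to keep the constructed objects within the partite analogue of $\RR$. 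Once these are settled, the pull-back of colourings and the last-copy argument are routine.
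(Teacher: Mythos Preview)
The paper does not prove this theorem: it is quoted from \cite{HN} as a black box and applied later. Your outline is essentially the argument given in \cite{HN} --- obtain $C_0 \in \RR$ with $C_0 \longrightarrow (B)^A_k$, run a partite construction over $C_0$ to produce a $\UU$-closed $D$ satisfying your (1)--(3), and then invoke the locally finite completion property to pull the Ramsey witness down into $\KK$. The transfer step at the end (pulling back a colouring of $\binom{C'}{A}$ along $f$ and pushing forward a monochromatic $\widehat B$) is exactly right.

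Two points in your sketch of the partite construction need correcting. First, the partite lemma in \cite{HN} is \emph{not} obtained from the Ramsey property of $\RR$; it is proved directly from the Hales--Jewett theorem via a product (combinatorial-line) construction. The Ramsey property of $\RR$ is used exactly once, to produce $C_0$, and thereafter the argument is combinatorial. Relatedly, your invariant (a) does not require amalgamation in $\RR$: one maintains throughout an explicit projection $P^i \to C_0$ which is a homomorphism-embedding by construction, so Ne\v{s}et\v{r}il's theorem is not invoked. Second, the iteration is indexed by the copies $A_1,\dots,A_M$ of $A$ in $C_0$, not by copies of partitioned versions of $B$ in $P^0$: at stage $i$ one restricts attention to the $a$-partite piece of $P^{i-1}$ lying over $A_i$, applies the partite lemma there, and then glues one copy of $P^{i-1}$ over each resulting transversal copy of that piece. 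Your description has the roles of $A$ and $B$ swapped in the inductive machinery, which would not yield the last-copy argument in the form you need. With these adjustments your plan matches \cite{HN}; the delicate point you flag --- compatibility of $\UU$-closure with the gluing, controlled by strong amalgamation of $\KK$ --- is indeed where the work lies.
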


If we wish to prove a class $\KK$ of $L$-structures is Ramsey, the following theorem from \cite{NR} provides, in many cases, a suitable $\RR$ for applying Theorem \ref{theorem:multiamalg}.

\begin{theorem} \label{theorem:NR}
Let $L$ be a finite relational language, such that $<$ is a binary relation in $L$. The class of all finite $L$-structures in which $<$ is interpreted as a linear order is a Ramsey class.
\end{theorem}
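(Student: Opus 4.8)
This is the classical theorem of Nešetřil and Rödl, so my plan is to reproduce their \emph{partite construction}, whose only genuinely new combinatorial input is the Hales--Jewett theorem; the presence of $<$ is precisely what keeps the bookkeeping tractable, since every finite structure in the class is rigid and every transversal copy of a fixed $A$ carries a canonical identification with $A$. For notational ease I would first reduce to a language $L=\{<,R\}$ with a single relation symbol $R$ of arity $r$: the relations of $L$ do not interact, so the general case follows by running the argument once per relation. Write $\KK$ for the class of all finite $L$-structures in which $<$ is a linear order. I would work in the auxiliary category of \emph{$N$-partite structures}: $L$-structures whose universe is partitioned into classes $X_1<X_2<\cdots<X_N$ (in the order) with every $r$-tuple in $R$ transversal to the partition, morphisms being transversal embeddings. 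A copy of a structure $A$ is \emph{placed at} $S\in\binom{[N]}{|A|}$ if its $i$-th vertex in the order lies in the part indexed by the $i$-th element of $S$; by rigidity this copy is canonically identified with $A$.

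The heart of the argument is the \textbf{Partite Lemma}: for every $N$-partite $\mathbf B$, every $S_0\in\binom{[N]}{|A|}$, and every $k$, there is an $N$-partite $\mathbf C$ such that every $k$-colouring of the copies of $A$ placed at $S_0$ inside $\mathbf C$ admits a transversal copy of $\mathbf B$ in $\mathbf C$ on which all copies of $A$ placed at $S_0$ receive one colour. I would prove this by forming a suitable product power of $\mathbf B$ along the parts indexed by $S_0$, with $R$ defined coordinatewise, so that the copies of $A$ at $S_0$ become the ``cells'' and the copies of $\mathbf B$ become combinatorial subcubes; a colouring then pulls back to one to which the Hales--Jewett theorem (in its Graham--Rothschild parameter-set form) applies, and a monochromatic line delivers the required monochromatic copy of $\mathbf B$.

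The main construction then iterates the Partite Lemma. Given $A,B\in\KK$ with $|A|=a$, $|B|=b$, and $k$ colours, use the \emph{classical} finite Ramsey theorem to choose $N$ with $N\to(b)^{a}_{k}$. Let $P_0$ be the ``$B$-picture'': the disjoint union, over all $T\in\binom{[N]}{b}$, of a copy $\mathbf B_T$ of $B$ placed at $T$, regarded as an $N$-partite structure. Enumerate $\binom{[N]}{a}=\{S_1,\dots,S_m\}$ and build a tower $P_0,P_1,\dots,P_m$ in which $P_i$ is obtained from $P_{i-1}$ by one application of the Partite Lemma with $\mathbf B=P_{i-1}$, $A=A$, $S_0=S_i$, so that $P_i\to(P_{i-1})^{(A,S_i)}_k$. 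Put $C:=P_m$ with its partition forgotten; then $C\in\KK$ and $C$ contains each $\mathbf B_T$. Given a $k$-colouring of $\binom{C}{A}$, successively pass to copies of $P_{m-1},P_{m-2},\dots,P_0$ inside $C$, each step using the Partite Lemma at $S_m,S_{m-1},\dots,S_1$ in turn; restriction preserves monochromaticity already achieved, so the final copy $Q_0\cong P_0$ has the colour of every copy of $A$ placed at $S_j$ constant, say equal to $\chi(S_j)$, for all $j$. This defines a $k$-colouring $\chi$ of $\binom{[N]}{a}$; since $N\to(b)^{a}_{k}$, some $T\in\binom{[N]}{b}$ has $\chi$ constant on $\binom{T}{a}$, and then $\mathbf B_T\subseteq Q_0\subseteq C$ is a monochromatic copy of $B$.

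The step I expect to be the main obstacle is the Partite Lemma itself, i.e.\ setting up the product power so that copies of $\mathbf B$ genuinely correspond to combinatorial subcubes and invoking Hales--Jewett cleanly; that is where all the real combinatorics lives. The remainder is organisational: checking that each application of the Partite Lemma stays within the $N$-partite category, that the tower keeps every picture copy $\mathbf B_T$ available, and that restriction down the tower does not disturb colours already stabilised. The linear order is what makes this organisational part routine, by forcing every transversal copy of $A$ (or of $B$) into a unique position and eliminating the symmetries that would otherwise obstruct the amalgamations.
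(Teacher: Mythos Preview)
The paper does not prove this theorem; it is quoted from Ne\v{s}et\v{r}il--R\"odl as a black box (to supply the ambient class $\RR$ in Theorem~\ref{theorem:multiamalg}) and no argument is given. Your sketch is precisely the classical partite construction from that reference, so you are supplying the proof the paper omits rather than offering an alternative to anything the paper does.

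One small correction: the reduction ``the relations do not interact, so run the argument once per relation'' does not work as stated. A Ramsey witness $C$ must be a single $L$-structure that works for all relations simultaneously; separate witnesses for $\{<,R_1\}$ and $\{<,R_2\}$ do not combine for free (that would require the free-superposition theorem, itself a nontrivial consequence of the partite method). Fortunately no reduction is needed: the Partite Lemma goes through unchanged with the full language, every relation being defined coordinatewise in the Hales--Jewett product power, and the amalgamation step in the tower is insensitive to how many relation symbols are present. Simply carry all of $L$ through the construction at once. With that adjustment your outline is the standard proof, and the point you flag as the main obstacle (matching copies of $\mathbf B$ to combinatorial lines in the product so that Hales--Jewett applies) is indeed where the content lies.
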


\section{$\Lambda$-Ultrametric Spaces and Subquotient Orders}

The first part of this section recalls some relevant results from \cite{Braun}, and we then introduce the shift in language from linear orders to subquotient orders.

\begin{definition} Let $\Lambda$ be a complete lattice. A \textit{$\Lambda$-ultrametric space} is a metric space where the metric takes values in $\Lambda$ and the triangle inequality involves join rather than addition, i.e. $d(x, z) \leq d(x, y) \join d(y, z)$.
\end{definition}

\begin{theorem}\label{theorem:isomorphism}
For a given finite lattice $\Lambda$, there is an isomorphism between the category of $\Lambda$-ultrametric spaces and the category of structures consisting of a set equipped with a family of equivalence relations, closed under taking intersections in the lattice of all equivalence relations on the set, and labeled by the elements of $\Lambda$ in such a way that the map from $\Lambda$ to the lattice of equivalence relations is meet-preserving. Furthermore, the functors of this isomorphism preserve homogeneity.
\end{theorem}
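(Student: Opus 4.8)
The plan is to exhibit a pair of functors between the two categories, each acting as the identity on underlying sets and on underlying maps, and to check that they are mutually inverse, well-defined on objects and on morphisms, and compatible with the formation of substructures; preservation of homogeneity will then be immediate. The translation of data is the obvious one: from a $\Lambda$-ultrametric space $(X,d)$ one reads off, for each $\lambda \in \Lambda$, the relation $E_\lambda$ given by $x \mathrel{E_\lambda} y \iff d(x,y) \leq \lambda$; conversely, from a set $X$ carrying a meet-preserving map $\lambda \mapsto E_\lambda$ into the lattice $\mathrm{Eq}(X)$ of all equivalence relations on $X$ one defines $d(x,y) = \bigwedge\{\lambda \in \Lambda : x \mathrel{E_\lambda} y\}$.

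For the forward direction I would check that $E_\lambda$ is reflexive because $d(x,x) = \bbzero \leq \lambda$, symmetric because $d$ is, and transitive because $d(x,y) \leq \lambda$ and $d(y,z) \leq \lambda$ force $d(x,z) \leq d(x,y) \join d(y,z) \leq \lambda$; thus $\lambda \mapsto E_\lambda$ lands in $\mathrm{Eq}(X)$. Since $d(x,y) \leq \lambda \meet \mu$ iff $d(x,y) \leq \lambda$ and $d(x,y) \leq \mu$, we get $E_{\lambda \meet \mu} = E_\lambda \cap E_\mu$, which is exactly meet-preservation (meet in $\mathrm{Eq}(X)$ being intersection), and — because $\Lambda$ is finite — also shows the image is closed under all intersections; moreover $E_\bbzero$ is the diagonal and $E_\bbone = X^2$. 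For the reverse direction, the one point needing care is that the infimum defining $d(x,y)$ is \emph{attained}: using finiteness of $\Lambda$, write it as a finite meet $\lambda_1 \meet \cdots \meet \lambda_k$ with $x \mathrel{E_{\lambda_i}} y$ for each $i$; meet-preservation gives $E_{\lambda_1 \meet \cdots \meet \lambda_k} = E_{\lambda_1} \cap \cdots \cap E_{\lambda_k}$, so $x \mathrel{E_{d(x,y)}} y$, and together with monotonicity of $\lambda \mapsto E_\lambda$ (itself a consequence of meet-preservation, since $\lambda \leq \mu$ gives $E_\lambda = E_{\lambda \meet \mu} = E_\lambda \cap E_\mu$) this yields the clean equivalence $d(x,y) \leq \lambda \iff x \mathrel{E_\lambda} y$. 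From that equivalence: $d(x,x) = \bbzero$ (using that $E_\bbzero$ is the diagonal; without that hypothesis one gets a pseudometric), $d$ is symmetric, and the $\Lambda$-triangle inequality holds, since with $\lambda = d(x,y)$, $\mu = d(y,z)$ monotonicity gives $x \mathrel{E_{\lambda \join \mu}} y$ and $y \mathrel{E_{\lambda \join \mu}} z$, hence $x \mathrel{E_{\lambda \join \mu}} z$ and $d(x,z) \leq \lambda \join \mu$.

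The same equivalence $d(x,y) \leq \lambda \iff x \mathrel{E_\lambda} y$ makes the two constructions visibly mutually inverse on objects, and shows that a map $f$ of underlying sets satisfies $d(f(x),f(y)) \leq d(x,y)$ for all $x,y$ (respectively, satisfies this with equality) exactly when it preserves each $E_\lambda$ (respectively, preserves and reflects each $E_\lambda$); so with the natural choice of morphisms on the two sides — homomorphisms on one side matching non-expansive maps, embeddings matching embeddings — the identity-on-underlying-sets assignment is functorial and invertible, giving an isomorphism of categories. Finally, this isomorphism carries a substructure on one side (a subset $S$ with $d$ restricted to $S$, or with each $E_\lambda$ intersected with $S^2$) to the corresponding substructure on the other, and is a bijection on isomorphisms and on automorphisms, so a structure has every isomorphism of finite substructures extending to an automorphism iff its image does; hence homogeneity is preserved in both directions. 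The only genuinely non-formal step in all of this is the observation in the reverse direction that finiteness of $\Lambda$ plus meet-preservation forces the infimum defining $d(x,y)$ to be attained — everything else is routine verification, so I expect that to be the crux (and the reason the hypothesis "$\Lambda$ finite" appears).
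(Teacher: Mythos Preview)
Your proof is correct, and the functors you construct are precisely the ones the paper specifies. Note, however, that the paper explicitly does \emph{not} prove this theorem: immediately after the statement it says ``Although we do not prove this theorem here, we will define the functors giving this isomorphism,'' and then records exactly the two assignments $E_\lambda = \{(x,y) : d(x,y) \leq \lambda\}$ and $d(x,y) = \bigwedge\{\lambda : x \mathrel{E_\lambda} y\}$ that you use. So there is no proof in the paper to compare against; you have supplied the verification the paper omits, and your identification of the one substantive step (attainment of the infimum via finiteness and meet-preservation) is apt.
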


Although we do not prove this theorem here, we will define the functors giving this isomorphism.

Given a system of equivalence relations as specified above, we get the corresponding $\Lambda$-ultrametric space by taking the same universe and defining $d(x, y) = \bigwedge \set{\lambda \in \Lambda | x E_\lambda y}$. In the reverse direction, given a $\Lambda$-ultrametric space, we get the corresponding structure of equivalence relations by taking the same universe and defining $E_\lambda = \set{(x,y) | d(x,y) \leq \lambda}$.

Because of this isomorphism, we will often conflate an element $\lambda \in \Lambda$ with the equivalence relation of being at distance $\leq \lambda$.

Since the lattices we are considering will always be finite, they will have a top and bottom element, denoted $\bbone$ and $\bbzero$, respectively. Thus, $d(x, y) = \bbzero$ iff $x = y$.

For every finite \textit{distributive} lattice $\Lambda$, a construction was given in \cite{Braun} producing a countable homogeneous finite-dimensional permutation structure $\Gamma$, such that the lattice of $\emptyset$-definable equivalence relations in $\Gamma$ is isomorphic to $\Lambda$. The structure $\Gamma$ is naturally presented as a $\Lambda$-ultrametric space, equipped with multiple orders. When $\Lambda$ is distributive, the class of all finite $\Lambda$-ultrametric spaces is an amalgamation class. The structure $\Gamma$ is constructed by taking the generic $\Lambda$-ultrametric space, and adding linear orders that are generic, except that they are required to be convex with respect to a prescribed set of equivalence relations corresponding to a chain of meet-irreducibles in $\Lambda$; enough such linear orders have to be added so that every meet-irreducible is convex with respect to at least one order, and there are further complications if $\bbzero$ (equality) is meet-reducible.

However, there are known countable homogeneous finite-dimensional permutation structures that cannot be produced by the construction outlined above. A slight modification to the language used in the construction remedies this, and the resulting construction produces all known countable homogeneous finite-dimensional permutation structures, as well as eliminating a special case the construction required when $\bbzero$ is meet-reducible. The idea behind the change of language is that when a linear order on $\Gamma$ is convex with respect to an equivalence relation $E$, it is better viewed as two partial orders: one that orders points within any given $E$-class, and one that encodes an order on $\Gamma/E$.

\begin{definition}
Let $X$ be a structure, and $E \leq F$ equivalence relations on $X$. A \textit{subquotient-order from $E$ to $F$} is a partial order on $X/E$ in which two $E$-classes are comparable iff they lie in the same $F$-class (note, this pulls back to a partial order on $X$). Thus, this partial order provides a linear order of $C/E$ for each $C \in X/F$. We call $E$ the \textit{bottom relation} and $F$ the \textit{top relation} of the subquotient-order. 
\end{definition}

Depending on the context, we will switch between considering a given subquotient order as a partial order on equivalence classes, or its pullback to a partial order on points. A special case of this is when the subquotient order has bottom relation equality, which amounts to equating $X$ with $X/=$.

Working at this level of generality requires a straightforward revision of the proof of amalgamation in \cite{Braun}*{Lemma 3.7}. The proof is actually simplified by the language change, yielding the following.

\begin{theorem} [\cite{3dim}] \label{theorem:amalg}
Let $\Lambda$ be a finite distributive lattice, and $\Gamma$ the generic $\Lambda$-ultrametric space. Then there is a homogeneous expansion of $\Gamma$ by finitely many subquotient orders, each of which has a meet-irreducible bottom relation, which is generic in a natural sense.
\end{theorem}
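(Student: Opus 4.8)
The plan is to exhibit the desired expansion as the \fraisse limit of a suitable amalgamation class, the only substantial point being the amalgamation of the subquotient orders.

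First I would fix the collection of subquotient orders to be added. Since the statement is existential, we are free to choose: fix a finite list $(E_1,F_1),\dots,(E_n,F_n)$ of pairs of elements of $\Lambda$ with $E_i\le F_i$ and each $E_i$ meet-irreducible (for instance, one pair $(E,\bbone)$ for each meet-irreducible $E$ of $\Lambda$). Let $\vec\AA_\Lambda$ be the class of finite $\Lambda$-ultrametric spaces $A$ carrying partial orders $<_1,\dots,<_n$, where $<_i$ is the pullback to $A$ of a subquotient order from $E_i$ to $F_i$, with embeddings as morphisms. This is a class in a finite relational language, hence has only countably many isomorphism types, and it is closed under substructures: a restriction of a $\Lambda$-ultrametric space is one, and if $A'\subseteq A$ then two $E_i$-classes of $A'$ are comparable under the restricted $<_i$ iff they were comparable in $A$ iff they lie in one $F_i$-class of $A$ iff they lie in one $F_i$-class of $A'$. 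Joint embedding will follow from amalgamation over the empty structure, so everything reduces to proving that $\vec\AA_\Lambda$ has amalgamation.

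For amalgamation, let $\vec A\hookrightarrow\vec B$ and $\vec A\hookrightarrow\vec C$ with $A=B\cap C$. On the underlying $\Lambda$-ultrametric spaces I would take the canonical strong amalgam from \cite{Braun}: $D=B\cup C$ with $d(b,c)=\bigwedge_{a\in A}\bigl(d(b,a)\vee d(a,c)\bigr)$ for $b\in B\setminus A$, $c\in C\setminus A$; distributivity of $\Lambda$ is exactly what makes this a $\Lambda$-ultrametric space, and this is the known amalgamation of $\AA_\Lambda$. The fact that lets us carry the orders along is a second use of distributivity: in a distributive lattice a meet-irreducible element is meet-prime, so if $d(b,c)\le E_i$ then $\bigwedge_{a\in A}(d(b,a)\vee d(a,c))\le E_i$ forces a single $a\in A$ with $d(b,a)\le E_i$ and $d(a,c)\le E_i$. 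Consequently every $E_i$-class of $D$ meeting both $B\setminus A$ and $C\setminus A$ already meets $A$; and since $d\restriction B=d_B$ and $d\restriction C=d_C$, each $F_i$-class of $B$ (resp.\ $C$) is the trace on $B$ (resp.\ $C$) of a unique $F_i$-class of $D$. Now I would extend each $<_i$ (write $E=E_i$, $F=F_i$) to $D$ independently. To give a subquotient order from $E$ to $F$ on $D$ is to choose, for each $F$-class $Y$ of $D$, a linear order on $Y/E$. Inside a fixed $Y$ the $E$-classes split into \emph{central} ones (meeting $A$), \emph{$B$-classes} (inside $B\setminus A$) and \emph{$C$-classes} (inside $C\setminus A$), with no other mixed $E$-classes by the above; the central ones are canonically linearly ordered by $<_i$ on $\vec A$, since they correspond to $(Y\cap A)/E$, which lies in one $F$-class of $A$. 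In $\vec B$ each $B$-class of $Y$ has a definite position relative to the central classes, hence falls into a definite ``gap'' of the central order, and similarly for $C$-classes in $\vec C$. I would then order $Y/E$ by extending the central order so that, within each gap, one lists first the $B$-classes of that gap in their $\vec B$-order and then the $C$-classes of that gap in their $\vec C$-order. Running over all $F$-classes $Y$, and keeping $E$-classes in distinct $F$-classes incomparable, defines a subquotient order $<_i$ on $D$. Its restriction to $B$ is $<_i^{\vec B}$ — central classes retain their order, $B$-classes retain their mutual order and their position relative to central classes, and $B$-classes are never compared with $C$-classes inside $B$ — and symmetrically for $C$; a short case check on pairs (central/central, central/$B$-class, $B$-class/$B$-class) confirms this. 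Hence $\vec D$, with $<_1,\dots,<_n$, lies in $\vec\AA_\Lambda$ and amalgamates $\vec B,\vec C$ over $\vec A$.

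Finally, $\vec\AA_\Lambda$ is a \fraisse class, so it has a countable homogeneous limit $\vec\Gamma$ with age $\vec\AA_\Lambda$. Its $\Lambda$-ultrametric reduct has age $\AA_\Lambda$ (every finite $\Lambda$-ultrametric space expands into $\vec\AA_\Lambda$) and inherits the extension property for $\AA_\Lambda$: to realise a one-point $\Lambda$-ultrametric extension of a finite $X\subseteq\Gamma$, first expand it — using the amalgamation above — to a one-point $\vec\AA_\Lambda$-extension of the structure $X$ inherits from $\vec\Gamma$, then realise that in $\vec\Gamma$ by homogeneity; so the reduct of $\vec\Gamma$ is $\Gamma$, and $\vec\Gamma$ is the asserted homogeneous expansion. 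Being a \fraisse limit, it is ``generic in a natural sense'' in that every finite pattern of distances and subquotient-order relations consistent with $\vec\AA_\Lambda$ is realised in it. The hard part is the amalgamation step, specifically the verification that the canonical $\Lambda$-ultrametric amalgam is compatible with freely interleaving the orders; this is exactly where meet-irreducibility of the bottom relations together with distributivity of $\Lambda$ (in the form ``meet-irreducible $\Rightarrow$ meet-prime'') enters, and it is also the point at which the present argument both generalises and, the language change notwithstanding, streamlines \cite{Braun}*{Lemma 3.7}.
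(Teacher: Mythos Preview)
The paper does not actually prove this theorem: it is stated with a citation to \cite{3dim}, and the surrounding text only remarks that the argument is a ``straightforward revision'' of \cite{Braun}*{Lemma 3.7}, ``simplified by the language change.'' Your proposal supplies exactly such a proof, and it is correct. The key step---that in the canonical metric amalgam no $E_i$-class can straddle $B\setminus A$ and $C\setminus A$ without meeting $A$, because meet-irreducibles are meet-prime in a distributive lattice---is precisely the point where the hypothesis on bottom relations enters, and your interleaving of $B$-classes and $C$-classes in the gaps of the central order is the natural way to complete the subquotient orders. One small comment: the phrase ``using the amalgamation above'' for the one-point extension step is slightly off; what you actually need there is just that any partial expansion of a finite $\Lambda$-ultrametric space by subquotient orders can be completed, which is immediate (place the new $E_i$-class anywhere in the relevant linear order). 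Otherwise this is the intended argument.
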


We now define two useful constructions with subquotient orders, and then give two examples of homogeneous finite-dimensional permutation structures not produced by the construction of \cite{Braun}, but which can be produced once linear orders are replaced by subquotient orders.

For the remainder of this section, if $x$ is an $E$-class, and $F$ an equivalence relation above $E$, then $x/F$ will represent the $F$-class containing $x$.

\begin{definition}
Let $<_{E, F}$ be a subquotient order with bottom relation $E$ and top relation $F$, and let  $<_{F, G}$ be a subquotient order with bottom relation $F$ and top relation $G$. Then the \textit{composition} of $<_{F, G}$ with $<_{E, F}$, denoted $<_{F, G}[<_{E, F}]$, is the subquotient order with bottom relation $E$ and top relation $F$ given by $x <_{F, G}[<_{E, F}] y$ iff either of the following holds.
\begin{enumerate}
\item $x$ and $y$ are in the same $F$-class, and $x <_{E, F} y$
\item $x$ and $y$ are in distinct $F$-classes, and $x/F <_{F, G} y/F$.
\end{enumerate}
\end{definition}

\begin{definition}
Let $<_{E, F}$ be a subquotient order with bottom relation $E$ and top relation $F$, and let $G$ be an equivalence relation lying between $E$ and $F$. Then the \textit{restriction of $<_{E, F}$ to $G$}, denoted $<_{E, F} \upharpoonright_G$, is the subquotient order with bottom relation $E$ and top relation $G$ given by $x <_{E, F} \upharpoonright_G y$ iff $x$ and $y$ are in the same $G$-class and $x <_{E, F} y$.
\end{definition}

\begin{example}
Let $\AA$ be the amalgamation class consisting of all finite structures in the language $\set{E, <_1, <_2}$, where $E$ is an equivalence relation, $<_1$ is a linear order, and $<_2$ is an $E$-convex linear order that agrees with $<_1$ on $E$-classes. 

 Let $\AA'$ be the class of all finite structures in the language $\set{E', <'_1, <'_2}$, where $E'$ is an equivalence relation, $<'_1$ is a subquotient order from $=$ to $\bbone$, and $<'_2$ a subquotient order from $E'$ to $\bbone$. This is also an amalgamation class, and its Fra\"\i ss\'e limit $\Gamma'$ is interdefinable with the Fra\"\i ss\'e limit $\Gamma$ of $\AA$. 

To define $\Gamma$ from $\Gamma'$, let $<_1 = <'_1$, and let $<_2 = <'_2[<'_1 \upharpoonright_E]$. To define $\Gamma'$ from $\Gamma$, let $<'_1 = <_1$, and let $x <'_2 y$ iff $\neg xEy$ and $x <_2 y$.
\end{example}

For the next example, we will use the following lemma, which also enters into the proof of Lemma \ref{lemma:sqoproduct}.

\begin{lemma} \label{lemma:movesqo}
Let $\Gamma$ be the generic $\Lambda$-ultrametric space. Let $E \in \Lambda$, with $E = F_1 \meet F_2$. Then a subquotient order $<_{F_1, F_1 \join F_2}$ on $\Gamma$ with bottom relation $F_1$ and top relation $F_1 \join F_2$ induces a definable subquotient order with bottom relation $E$ and top relation $F_2$.
\end{lemma}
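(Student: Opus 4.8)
The idea is that in the generic $\Lambda$-ultrametric space, distributivity makes the quotient by $E = F_1 \meet F_2$ "split" via the $F_i$: two points $x, y$ with $d(x,y) \le F_1 \join F_2$ are $E$-equivalent iff they are both $F_1$-equivalent and $F_2$-equivalent, and more usefully, the map sending (the $E$-class of) $x$ to the pair $(x/F_1, x/F_2)$ is, on each $(F_1\join F_2)$-class, an injection into the product of an $F_1$-class-set by an $F_2$-class-set, whose image is all pairs that are "consistent". Concretely, I would first verify the set-theoretic fact: for $x, y$ in a common $(F_1 \join F_2)$-class, $x \mathrel{E} y$ iff $x \mathrel{F_1} y$ and $x \mathrel{F_2} y$; this is immediate from $E = F_1 \meet F_2$ and does not even need genericity. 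What genericity (and distributivity, via Theorem \ref{theorem:amalg} / the amalgamation of $\AA_\Lambda$) buys is that inside a fixed $(F_1 \join F_2)$-class $D$, the set of realized pairs $(x/F_1, x/F_2)$ for $x \in D$ is a "grid": for any $F_1$-class $a \subseteq D$ and any $F_2$-class $b \subseteq D$ there is a (necessarily unique) $E$-class $c$ with $c \subseteq a \cap b$. I would state and use this grid property as the main structural input.

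Granting the grid picture, the construction is the following. Given the subquotient order $<_{F_1, F_1 \join F_2}$ — a partial order on $\Gamma/F_1$ that linearly orders $D/F_1$ for each $(F_1 \join F_2)$-class $D$ — I define a relation $<'$ on $\Gamma/E$ by declaring, for $E$-classes $x, y$: $x <' y$ iff $x \mathrel{F_2} y$ (so they lie in a common $F_2$-class, hence a common $(F_1\join F_2)$-class $D$) and $x/F_1 <_{F_1, F_1 \join F_2} y/F_1$. I then check $<'$ is a subquotient order from $E$ to $F_2$: it pulls back to a partial order on points; within a single $F_2$-class $b$ it is a linear order on $b/E$ because, by the grid property, distinct $E$-classes inside $b$ have distinct $F_1$-classes (two $E$-classes in $b$ with the same $F_1$-class would be equal), and those $F_1$-classes all lie in the common $(F_1 \join F_2)$-class $D \supseteq b$, where $<_{F_1, F_1 \join F_2}$ is a linear order; and $E$-classes in different $F_2$-classes are $<'$-incomparable by definition. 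Finally $<'$ is definable from $<_{F_1, F_1 \join F_2}$ together with $E$ and $F_2$ (both $\emptyset$-definable in $\Gamma$), which is all that is claimed.

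The main obstacle is the grid/surjectivity claim: that every pair (an $F_1$-class and an $F_2$-class lying in the same $(F_1\join F_2)$-class) is actually realized by some $E$-class. This is where distributivity of $\Lambda$ and the fact that $\Gamma$ is the \emph{generic} $\Lambda$-ultrametric space are essential — one exhibits a two-point $\Lambda$-ultrametric space with distance $F_1 \join F_2$ between the points and amalgamates it appropriately (using that $\AA_\Lambda$ is an amalgamation class, Theorem \ref{theorem:isomorphism} and the discussion preceding Theorem \ref{theorem:amalg}) to find the required $E$-class inside the intersection; distributivity is what guarantees the resulting distance constraints are jointly satisfiable, i.e.\ that the amalgam is a legitimate $\Lambda$-ultrametric space. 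I expect this to be a short argument but the one place where the hypotheses genuinely enter; once it is in hand, the verification that $<'$ is a well-defined definable subquotient order is routine bookkeeping of the kind already illustrated in the preceding Example.
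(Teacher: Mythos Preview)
Your construction of $<'$ is exactly the paper's, and the verification that it is a subquotient order from $E$ to $F_2$ is correct. However, you have misidentified where the difficulty lies. The surjectivity half of your ``grid property'' --- that every pair consisting of an $F_1$-class $a$ and an $F_2$-class $b$ inside a common $(F_1\join F_2)$-class is realized by some $E$-class --- is true in $\Gamma$, but it is not used anywhere in the argument. All that is needed is the \emph{injectivity}: distinct $E$-classes inside a fixed $F_2$-class $b$ lie in distinct $F_1$-classes. You already prove this in parentheses, directly from $E = F_1 \meet F_2$, with no appeal to genericity or distributivity; pulling back the linear order on $D/F_1$ along an injection yields a linear order on $b/E$ regardless of whether the injection is onto.

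The paper's proof is accordingly two sentences: note that within an $F_2$-class the $E$-classes have distinct $F_1$-classes, all lying in one $(F_1\join F_2)$-class, and define $x <_{E,F_2} y \Leftrightarrow x/F_1 <_{F_1,F_1\join F_2} y/F_1$. Your ``main obstacle'' is a phantom; drop the surjectivity claim and the amalgamation argument entirely, and what remains is the complete proof.
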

\begin{proof}
We wish to define an order on $E$-classes within $F_2$-classes. Since $E = F_1 \meet F_2$, within a given $F_2$-class each $E$-class is in a distinct $F_1$-class, and they are all in the same $(F_1 \join F_2)$-class. Thus, we can define a subquotient order $<_{E, F_2}$ with bottom relation $E$ and top relation $F_2$ by $x <_{E, F_2} y \Leftrightarrow x/F_1 <_{F_1, F_1 \join F_2} y/F_1$.
\end{proof}

\begin{example}
For a more complex example of the use of subquotient orders, consider the full product $\Q^2$. This is a homogeneous structure with universe $\Q^2$ in the language $\set{E_1, E_2, <_1, <_2}$, where $E_1$ and $E_2$ are the relations defined by agreement in the first and second coordinates, respectively, $<_1$ is the standard lexicographic order on $\Q^2$, and $<_2$ is the standard antilexicographic order on $\Q^2$. The structure $(\Q, E_1, E_2, <_1, <_2)$ can also be presented in the language of 4 linear orders and no equivalence relations.

We could also express this in the same signature, but with $<_1$ a subquotient order with bottom relation $E_1$ and top relation $\bbone$, and $<_2$ a subquotient order with bottom relation $E_2$ and top relation $\bbone$. We can take the subquotient orders to be generic. In the resulting Fra\"\i ss\'e limit, we can define the standard lexicographic order by using Lemma \ref{lemma:movesqo} on $<_2$ to induce a subquotient order $<_{=, E_1}$ with bottom relation equality, and top relation $E_1$, and then taking the composition $<_1[<_{=, E_1}]$. We may similarly define the standard antilexicographic order.
\end{example}

These examples cannot be produced by the construction from \cite{Braun} when using linear orders rather than subquotient orders, since there the only constraints we put on the linear orders were convexity conditions, which involves forbidding substructures of order 3. However, in Example 1, we must forbid a substructure of order 2 to force $<_1$ and $<_2$ to agree between $E$-related points. In Example 2, we must forbid the following substructure of order 4 (as well as another symmetric substructure):
\begin{enumerate}
\item $x_1 E_1 x_2$, $y_1 E_1 y_2$, $\neg x_1 E_1 y_1$
\item $x_1 E_2 y_1$, $x_2 E_2 y_2$, $\neg x_1 E_2 x_2$ 
\item $x_1 <_1 x_2$, $y_2 <_1 y_1$
\end{enumerate}

\begin{definition}
Let $\Lambda$ be a finite distributive lattice, and let $L$ be a language consisting of relations for the distances in $\Lambda$ and finitely many subquotient orders, labeled with their top and bottom relations. We say that the language $L$ is \textit{$\Lambda$-well-equipped} if $E \in \Lambda$ appears as the bottom relation of some subquotient order in $L$ with distinct bottom and top relations iff $E$ is meet-irreducible, for every $E \in \Lambda$. 

If $\AA_\Lambda$ is the class of all finite $\Lambda$-ultrametric spaces, and $L$ a $\Lambda$-well-equipped language, we will call $\vec \AA_\Lambda$ a \textit{well-equipped lift} of $\AA_\Lambda$ if it consists of all finite $\Lambda$-ultrametric spaces equipped with subquotient orders from $L$.
\end{definition}

\begin{lemma} \label{lemma:sqoproduct}
Let $\Lambda$ be a finite distributive lattice, $\AA_\Lambda$ the class of all finite $\Lambda$-ultrametric spaces, and $\vec \AA_\Lambda$ a well-equipped lift of $\AA_\Lambda$, with Fra\"\i ss\'e limit $\vec \Gamma$. Then for every $E<F \in \Lambda$, $\vec \Gamma$ has a definable subquotient order with bottom-relation $E$ and top-relation $F$.
\end{lemma}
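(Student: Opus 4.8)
The plan is to reduce to the case $F = \bbone$ and then bootstrap through the lattice. First I would observe that it suffices to produce, for every $E \in \Lambda$ with $E < \bbone$, a definable subquotient order with bottom relation $E$ and top relation $\bbone$: given such an order $<_{E,\bbone}$, the restriction $<_{E,\bbone}\upharpoonright_F$ is then a definable subquotient order from $E$ to $F$ for any $F$ with $E < F$. Throughout I would use, without further comment, that compositions and restrictions of definable subquotient orders are again definable, which is immediate from the quantifier-free formulas defining them.

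To build a definable subquotient order from $E$ to $\bbone$, I would induct downward on $E$ --- say, on the number of elements of $\Lambda$ lying above $E$ --- with the base case $E = \bbone$ vacuous. Assuming the claim for all $E' > E$: if $E$ is meet-irreducible, then since $L$ is $\Lambda$-well-equipped it contains some subquotient order $<_{E, F'}$ with bottom relation $E$ and a top relation $F' > E$; if $F' = \bbone$ this is what we want, and otherwise the induction hypothesis gives a definable $<_{F', \bbone}$, so the composition $<_{F',\bbone}[<_{E,F'}]$ works. If instead $E$ is meet-reducible, I would pick $F_1, F_2 > E$ with $E = F_1 \meet F_2$ (noting this forces $F_1, F_2 < \bbone$, and $F_1 < F_1 \join F_2$): the induction hypothesis gives a definable $<_{F_1, \bbone}$, whose restriction $<_{F_1,\bbone}\upharpoonright_{F_1 \join F_2}$ is a definable subquotient order from $F_1$ to $F_1 \join F_2$, and Lemma \ref{lemma:movesqo} (applied with bottom $F_1$, top $F_1 \join F_2$, and $E = F_1 \meet F_2$) converts this into a definable subquotient order $<_{E, F_2}$ from $E$ to $F_2$; composing with a definable $<_{F_2, \bbone}$ (again from the induction hypothesis, applicable since $F_2 > E$) yields the desired subquotient order from $E$ to $\bbone$.

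I do not expect a serious obstacle: the whole argument is bookkeeping with the three operations of composition, restriction, and Lemma \ref{lemma:movesqo}. The two places to be careful are checking, in the meet-reducible case, that $F_1$ and $F_2$ really can be taken strictly below $\bbone$ (so the induction hypothesis applies to each, and the restriction fed into Lemma \ref{lemma:movesqo} genuinely has distinct bottom and top), and noting that Lemma \ref{lemma:movesqo}, although stated for the generic $\Lambda$-ultrametric space $\Gamma$, transfers to the expansion $\vec\Gamma$ since the subquotient order it produces is given by a quantifier-free formula valid in any $\Lambda$-ultrametric space satisfying $E = F_1 \meet F_2$, and the $\Lambda$-ultrametric reduct of $\vec\Gamma$ is $\Gamma$.
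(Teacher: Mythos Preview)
Your proof is correct and takes essentially the same approach as the paper: downward induction on $E$, handling the meet-irreducible case via the well-equipped hypothesis and the meet-reducible case via Lemma~\ref{lemma:movesqo}, with compositions and restrictions as glue. The only cosmetic difference is that you reduce to producing subquotient orders with top relation $\bbone$, whereas the paper reduces to producing subquotient orders from $E$ to each cover of $E$.
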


\begin{proof}
We prove this by downward induction in $\Lambda$. Take an arbitrary $E \in \Lambda$, and assume the claim is true for every element above $E$. 

We first note that it is sufficient, for every $F' \in \Lambda$ covering $E$, to construct a definable subquotient order $<_{E, F'}$ with bottom relation $E$ and top relation $F'$. Indeed, by the induction hypothesis, there is some definable subquotient order $<_{F', F}$ with bottom relation $F'$ and top relation $F$, and then the composition $<_{F', F}[<_{E, F'}]$ gives the desired definable subquotient order.  

First assume $E$ is meet-irreducible. Then there is a unique $F' \in \Lambda$ covering $E$. By assumption, there is some subquotient order $<_E$ with bottom relation $E$ and top relation some $F'' \geq F'$. Then the restriction $<_{E}\upharpoonright_{F'}$ is as desired.

Now assume that $E$ is meet-reducible, and let $F'$ cover $E$. Since $E$ is meet-reducible, there is some $F''> E$ such that $E = F' \meet F''$. By the induction hypothesis, there is a definable subquotient order $<_{F'', F' \join F''}$ with bottom relation $F''$, and top relation $F' \join F''$. Then Lemma \ref{lemma:movesqo} provides a definable subquotient order with bottom relation $E$ and top relation $F'$.
\end{proof}

\begin{corollary} \label{corollary:extendsqo}
Let $\Lambda$ be a finite distributive lattice, $\AA_\Lambda$ the class of all finite $\Lambda$-ultrametric spaces, and $\vec \AA_\Lambda$ a well-equipped lift of $\AA_\Lambda$, with Fra\"\i ss\'e limit $\vec \Gamma$. Given any subquotient order $<_{E}$ on $\vec \Gamma$ with bottom relation $E$ and top relation $F$, we can define on $\vec \Gamma$ a subquotient order $<'_{E}$ with bottom relation $E$ and top relation $\bbone$, in such a way that $x <_{E} y$ iff $x <'_{E} y$ and $x, y$ are in the same $F$-class. 
\end{corollary}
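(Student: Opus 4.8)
The plan is to reduce everything to the composition operation on subquotient orders, using Lemma \ref{lemma:sqoproduct} to supply the missing piece. First, I would dispose of the degenerate case $F = \bbone$: here every pair of elements already lies in a common $F$-class, so one simply takes $<'_E \;=\; <_E$ and the asserted equivalence holds vacuously. So assume $F < \bbone$.

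Next, I would invoke Lemma \ref{lemma:sqoproduct}, applied to the pair $F < \bbone$, to obtain a definable subquotient order $<_{F,\bbone}$ on $\vec\Gamma$ with bottom relation $F$ and top relation $\bbone$. I then set
\[
<'_E \;:=\; <_{F,\bbone}[<_E],
\]
the composition of $<_{F,\bbone}$ with the given $<_E$ (which has bottom relation $E$ and top relation $F$, so the composition is defined). By definition of composition this is a subquotient order with bottom relation $E$ and top relation $\bbone$, and it is definable since both $<_E$ and $<_{F,\bbone}$ are.

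Finally, I would check the required property straight from the definition of composition. If $x$ and $y$ lie in the same $F$-class, then $x <'_E y$ holds iff $x <_E y$ (clause (1) of the composition); if $x$ and $y$ lie in distinct $F$-classes then $x <'_E y$ is governed by clause (2), i.e. by $x/F <_{F,\bbone} y/F$, and in particular $<_E$ plays no role. Hence "$x <'_E y$ and $x,y$ are in the same $F$-class" is equivalent to $x <_E y$, as desired.

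There is no real obstacle here; the statement is essentially a packaging of Lemma \ref{lemma:sqoproduct} together with the composition construction. The only point requiring the slightest care is the trivial boundary case $F = \bbone$, where Lemma \ref{lemma:sqoproduct} does not apply but the conclusion is immediate.
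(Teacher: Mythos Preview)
Your proof is correct and follows essentially the same approach as the paper: invoke Lemma~\ref{lemma:sqoproduct} to obtain a definable subquotient order from $F$ to $\bbone$, then compose with $<_E$. You are slightly more careful than the paper in explicitly separating off the degenerate case $F = \bbone$ (where Lemma~\ref{lemma:sqoproduct} does not literally apply) and in verifying the asserted equivalence from the definition of composition, but the argument is the same.
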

\begin{proof}
By Lemma \ref{lemma:sqoproduct}, there is a definable subquotient order $<_{F}$ with bottom relation $F$ and top relation $\bbone$. Then the composition ${<_{F}[<_{E}]}$ is as desired.
\end{proof}

\begin{remark} \label{rem:choice}
We will later find it useful to have made concrete choices when applying Lemma \ref{lemma:sqoproduct} and Corollary \ref{corollary:extendsqo}. In particular, given an enumeration ${(<_{G, i})}$ of the subquotient orders with bottom relation $G$ for every $G \in \Lambda$, we may always use subquotient orders that have 1 in the second index, with the possible exception of the specified subquotient order $<_E$ in Corollary \ref{corollary:extendsqo}. 
\end{remark}

\begin{proposition}
Let $\Lambda$ be a finite distributive lattice, $\AA_\Lambda$ be the class of all finite $\Lambda$-ultrametric spaces, and $\vec \AA_\Lambda$ a well-equipped lift of $\AA_\Lambda$, with Fra\"\i ss\'e limit $\vec \Gamma$. Then the relations of $\vec \Gamma$ are interdefinable with a set of linear orders.
\end{proposition}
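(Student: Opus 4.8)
The plan is to write down an explicit finite family of linear orders on $\vec\Gamma$, each definable in $\vec\Gamma$, from which every relation of $\vec\Gamma$ --- the distance relations and the subquotient orders of $L$ --- can be recovered. List the subquotient orders of $L$ that have distinct bottom and top relations as $<_{E_1,F_1},\dots,<_{E_k,F_k}$; by $\Lambda$-well-equippedness every meet-irreducible of $\Lambda$ occurs among the $E_i$. For each $i$, Corollary~\ref{corollary:extendsqo} supplies a definable subquotient order $<'_i$ with bottom relation $E_i$ and top relation $\bbone$ such that $x <_{E_i,F_i} y \Leftrightarrow (x <'_i y \wedge x\,F_i\,y)$; and since the unique $\bbone$-class is all of $\vec\Gamma$, $<'_i$ is a linear order of the quotient $\vec\Gamma/E_i$. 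If $E_i = \bbzero$ then $<'_i$ is already a linear order on $\vec\Gamma$ and I put $L_i = M_i = {<'_i}$. If $E_i \neq \bbzero$, then Lemma~\ref{lemma:sqoproduct} supplies a definable subquotient order $\sigma_i$ with bottom relation $\bbzero$ and top relation $E_i$, that is, a linear order of each $E_i$-class; writing $\sigma_i^{\mathrm{op}}$ for $\sigma_i$ with the order within each $E_i$-class reversed (again a definable subquotient order from $\bbzero$ to $E_i$), I put $L_i := {<'_i}[\sigma_i]$ and $M_i := {<'_i}[\sigma_i^{\mathrm{op}}]$. Each of $L_i, M_i$ is then a subquotient order with bottom relation $\bbzero$ and top relation $\bbone$, hence a genuine linear order on $\vec\Gamma$, and each is definable in $\vec\Gamma$, being built by quantifier-free composition and reversal from definable relations. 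I claim the finite family $\{L_i, M_i : 1 \le i \le k\}$ has the required property.

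For the substantive half of interdefinability I would recover the relations of $\vec\Gamma$ from this family in stages. First, $L_i$ and $M_i$ restrict to the same order (namely $<'_i$) on any pair of points lying in distinct $E_i$-classes, while they order oppositely any two distinct points of a common $E_i$-class; hence
\[
x\,E_i\,y \quad\Longleftrightarrow\quad x = y \ \text{ or }\ \bigl(x L_i y \ \not\leftrightarrow\ x M_i y\bigr),
\]
so each meet-irreducible $E_i$ is definable from $\{L_i, M_i\}$. Since every element of the finite lattice $\Lambda$ is a meet of meet-irreducibles, and meet in $\Lambda$ corresponds to intersection of equivalence relations, every $E_\lambda$, and hence every distance relation of $\vec\Gamma$, is definable from the family. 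Next, $x <'_i y \Leftrightarrow (\neg(x\,E_i\,y) \wedge x L_i y)$ recovers each $<'_i$, and then $x <_{E_i,F_i} y \Leftrightarrow (x <'_i y \wedge x\,F_i\,y)$ recovers each subquotient order of $L$, the required $E_i$ and $F_i$ already being in hand. (A subquotient order of $L$ with equal bottom and top relations is trivial and may be dropped from the list.) Together with the definability of each $L_i, M_i$ in $\vec\Gamma$ noted above, this gives interdefinability.

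The one genuinely delicate point is the recovery of the meet-irreducibles $E_i$: a single $E_i$-convex linear order cannot distinguish $E_i$ from any coarser $E_i$-convex equivalence relation, which is exactly why one introduces the companion order $M_i$ with the intra-class order reversed, so that ``$E_i$-related and distinct'' becomes precisely the relation ``$L_i$ and $M_i$ disagree.'' Everything else is bookkeeping; the structural inputs are Corollary~\ref{corollary:extendsqo} and Lemma~\ref{lemma:sqoproduct}, together with the elementary fact that in a finite lattice every element is a meet of meet-irreducibles.
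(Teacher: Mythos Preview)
Your argument is correct and follows essentially the same approach as the paper: both proofs use Corollary~\ref{corollary:extendsqo} and Lemma~\ref{lemma:sqoproduct} to turn each subquotient order into a linear order on points, and both encode an equivalence relation $E$ via a pair of $E$-convex linear orders related by a reversal. The only cosmetic differences are that the paper reverses \emph{between} $E$-classes (so $E$ is recovered as the agreement set) while you reverse \emph{within} $E$-classes (so $E$ is the disagreement set), and the paper carries out the reversal trick for every $E\in\Lambda$ whereas you do it only for the meet-irreducibles and recover the remaining equivalence relations as intersections---a mild economy, but not a genuinely different route.
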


\begin{proof}
For each $E \in \Lambda$, and each subquotient order $<_{E, i}$ in the language with bottom relation $E$, let $<'_{E, i}$ be a subquotient order as in Corollary \ref{corollary:extendsqo}. By Lemma \ref{lemma:sqoproduct}, let $<_{\bbzero, E}$ be a definable subquotient order with bottom relation equality and top relation $E$, and let $<''_{E, i}$ be the linear order given by the composition $<'_{E, i}[<_{\bbzero, E}]$.

Then, in the language consisting of the equivalence relations $E \in \Lambda$, the set of subquotient orders is interdefinable with the set of corresponding linear orders produced above. We may further use Lemma \ref{lemma:sqoproduct} to produce, for every $E \in \Lambda$, a definable linear order that is $E$-convex. Then, the equivalence relations can be interdefinably replaced with linear orders $\set{<^*_E}$ defined below.

For each $E \in \Lambda$:
\begin{enumerate}
\item Let $<_E$ be the definable linear order such that $E$ is $<_E$-convex
\item Let $<^*_E$ agree with $<_E$ within $E$-classes, and agree with the reverse of $<_E$ between $E$-classes.
\end{enumerate} 
\end{proof}
	
\section{The Classes $\KK_0$ and $\KK$}	
Let $\Lambda$ be a finite distributive lattice, ${\AA_\Lambda}$ the class of all finite $\Lambda$-ultrametric spaces, and $\vec \AA_\Lambda$ a well-equipped lift. In order to to prove the locally finite completion property required in Theorem \ref{theorem:multiamalg}, we will need to lift $\vec \AA_\Lambda$ to a linguistically more complex class. The first part of the lift, adding elements representing equivalence classes, is isolated below. It is similar to the lift employed in Lemma 4.28 of \cite{HN} for metric spaces with jumps, and is common in model theory.

A $\KK_0$-structure is meant to be viewed as follows: the elements of $P_{E, 1}$ represent the $E$-classes of a $\Lambda$-ultrametric space, and $U_{E, E'}(x, y)$ holds if $x$ represents an $E$-class and $y$ represents the $E'$-class containing $x$. The metric is not explicitly present in the language of the lift, since it is encoded by the family $\set{U_{E, E'}}$. 

\begin{definition}
Let $\LL_0 = \set{\set{P_{E, 1}}_{E \in \Lambda}, \set{U_{E,E'}}_{E<E' \in \Lambda}}$, be a relational language where the $P_{E, 1}$ are unary and the $U_{E, E'}$ are binary. Let $\UU_U$ be the following closure description: the $U_{E, E'}$ are closure relations, and the root of $U_{E, E'}$ is a single point $x$ such that $P_{E, 1}(x)$.

Let $\KK_0$ consist of all finite $\UU_U$-closed $\LL_0$-structures for which the following hold.
		
		\begin{enumerate}
			\item The family $\set{P_{E, 1}}_{E \in \Lambda}$ forms a partition
			\item If $U_{E,E'}(x,y)$, then $P_{E, 1}(x)$ and $P_{E', 1}(y)$
			\item (Coherence) If $E < E' < E'' \in \Lambda$ and $U_{E, E'}(x, y)$, then $U_{E', E''}(y, z)$ implies $U_{E, E''}(x, z)$
			\item (Downward semi-closure) If $P_{E, 1}(x)$ and $P_{E', 1}(x')$, then there is at most one $y$ such that $P_{E \meet E', 1}(y)$ and $U_{E \meet E', E}(y, x)$, $U_{E \meet E', E'}(y, x')$
		\end{enumerate}
			
\end{definition}

\begin{definition}
Let $\leq_U$ be the relation defined on a $\KK_0$-structure by $x \leq_U y$ if there are $E, E' \in \Lambda$ such that $U_{E, E'}(x, y)$. If $x \leq_U y$ and we wish to specify that $y$ is an $E'$-class, we will write $x/E' = y$.
\end{definition}
		
		\begin{proposition}
			Let $K \in \KK_0$ and let $x, x' \in K$.
			Suppose $x/E_1 = z_1 = x'/E_1$, $x/E_2 = z_2 = x'/E_2$. Then there exists $y \in K$ such that $x/(E_1\meet E_2) = y = x'/(E_1\meet E_2)$.
		\end{proposition}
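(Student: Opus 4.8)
Here is a plan for proving the proposition.

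The plan is to produce $y$ as the unique element representing ``the $(E_1\meet E_2)$-class of $x$'' and then to verify that it equally represents the $(E_1\meet E_2)$-class of $x'$ (this statement is the $\KK_0$-incarnation of the fact that in a $\Lambda$-ultrametric space being $E_1$- and $E_2$-related forces being $(E_1\meet E_2)$-related). Fix $E_x, E_{x'}\in\Lambda$ with $P_{E_x,1}(x)$ and $P_{E_{x'},1}(x')$, and adopt throughout the convention $x/E = x$ when $P_{E,1}(x)$, so that the argument is uniform even when some of the lattice elements involved coincide. Since the relations $U_{E,E'}$ occur in $\LL_0$ only for comparable $E,E'$ and obey axiom (2), the hypotheses immediately give $E_x\le E_1\meet E_2$, $E_{x'}\le E_1\meet E_2$, $P_{E_1,1}(z_1)$, and $P_{E_2,1}(z_2)$.

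First I would invoke $\UU_U$-closedness of $K$: since $x$ is an embedding of the root of the closure relation $U_{E_x, E_1\meet E_2}$, there is a unique $y$ with $U_{E_x, E_1\meet E_2}(x,y)$, and by axiom (2) $P_{E_1\meet E_2,1}(y)$; likewise there is a unique $y'$ with $U_{E_{x'}, E_1\meet E_2}(x',y')$ and $P_{E_1\meet E_2,1}(y')$. It then suffices to show $y=y'$, since that element witnesses $x/(E_1\meet E_2)=y=x'/(E_1\meet E_2)$.

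To prove $y=y'$ I would pin down the images of $y$ and $y'$ at levels $E_1$ and $E_2$. Applying $\UU_U$-closedness to $y\in P_{E_1\meet E_2,1}$ gives the unique $w$ with $U_{E_1\meet E_2,E_1}(y,w)$; coherence (axiom (3)) along the chain $E_x\le E_1\meet E_2\le E_1$, applied to $U_{E_x,E_1\meet E_2}(x,y)$ together with $U_{E_1\meet E_2,E_1}(y,w)$, yields $U_{E_x,E_1}(x,w)$, and then uniqueness of the $E_1$-image of $x$ forces $w=z_1$. Hence $U_{E_1\meet E_2,E_1}(y,z_1)$, and symmetrically $U_{E_1\meet E_2,E_2}(y,z_2)$; the same computation applied to $x'$ gives $U_{E_1\meet E_2,E_1}(y',z_1)$ and $U_{E_1\meet E_2,E_2}(y',z_2)$. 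Finally, downward semi-closure (axiom (4)) applied to the pair $z_1\in P_{E_1,1}$, $z_2\in P_{E_2,1}$ asserts that at most one $v$ satisfies $P_{E_1\meet E_2,1}(v)$, $U_{E_1\meet E_2,E_1}(v,z_1)$, and $U_{E_1\meet E_2,E_2}(v,z_2)$; since both $y$ and $y'$ do, $y=y'$.

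The only point needing care is the orientation of the coherence axiom: it propagates a $U$-edge \emph{forward} along a refinement $E\le E'\le E''$, which is exactly what is needed to identify the a priori abstract $E_1$-image $w$ of $y$ with the prescribed class $z_1$, once the functionality of closure relations is used. The degenerate configurations in which some of $E_x,E_{x'},E_1\meet E_2,E_1,E_2$ coincide are absorbed into the convention $x/E=x$: there the relevant closure relations collapse to the identity and the identifications above hold for free, so no separate casework is required.
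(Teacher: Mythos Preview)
Your argument is correct and follows exactly the same route as the paper's proof: obtain $y=x/(E_1\meet E_2)$ and $y'=x'/(E_1\meet E_2)$ from $\UU_U$-closedness, use coherence to see $y/E_i=z_i=y'/E_i$ for $i=1,2$, and conclude $y=y'$ by downward semi-closure. The only difference is that you spell out the coherence step and the degenerate cases more carefully than the paper does.
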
 
		\begin{proof}
			Since $K$ is $\UU_{U}$-closed, there are unique $y = x/(E_1 \meet E_2)$ and $y' = x'/(E_1 \meet E_2)$. By coherence, $y/E_1 = z_1=y'/E_1, y/E_2 = z_2=y'/E_2$. By downward semi-closure, $y=y'$.
		\end{proof}
	
		\begin{definition} For $x, y \in K_0$, define $\delta(x, y)$ to be the least $E$ such that $x/E = y/E$. By the above proposition, this is well-defined.
		\end{definition}
		
		\begin{proposition} \label{proposition:triangle}
			Let $K \in \KK_0$. Then $\delta$  satisfies the triangle inequality in $K$.
		\end{proposition}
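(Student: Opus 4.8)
The plan is to first record a functoriality property of the ``class of'' operation and then read off the triangle inequality as an immediate consequence. Write $x/E$ for the point assigned to $x$ by the closure relation into $P_{E,1}$, which is defined whenever $E$ is at least the $\Lambda$-label of $x$ (with the convention $x/A = x$ when $x \in P_{A,1}$). The lemma I would prove first is: \emph{if $x/E$ is defined and $E \le E' \in \Lambda$, then $(x/E)/E' = x/E'$.} For $E < E'$ this is exactly coherence (condition (3) of the definition of $\KK_0$) combined with $\UU_U$-closedness: if $x \in P_{A,1}$, then $x/E$ is the unique point $w$ with $U_{A,E}(x,w)$ and $x/E'$ is the unique point with $U_{A,E'}(x,\cdot)$, so coherence forces $x/E' = w/E' = (x/E)/E'$; the cases $E = E'$ and $E = A$ are trivial. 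An immediate corollary is \emph{monotonicity}: if $x/E = y/E$ and $E \le E'$, then $x/E' = (x/E)/E' = (y/E)/E' = y/E'$.

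Granting this, the triangle inequality is short. Fix $x, y, z \in K$, put $E_1 = \delta(x,y)$ and $E_2 = \delta(y,z)$, so $x/E_1 = y/E_1$ and $y/E_2 = z/E_2$. Since $\delta(x,y)$ being defined forces $E_1$ to be at least the $\Lambda$-labels of both $x$ and $y$, and similarly for $E_2$, the join $E_1 \vee E_2$ is at least the label of each of $x, y, z$; hence $x/(E_1 \vee E_2)$, $y/(E_1 \vee E_2)$, $z/(E_1 \vee E_2)$ are all defined. Applying monotonicity to the two equalities above with $E' = E_1 \vee E_2$ gives $x/(E_1 \vee E_2) = y/(E_1 \vee E_2)$ and $y/(E_1 \vee E_2) = z/(E_1 \vee E_2)$, hence $x/(E_1 \vee E_2) = z/(E_1 \vee E_2)$. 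Thus $E_1 \vee E_2$ belongs to $\{E \in \Lambda : x/E = z/E\}$, which is therefore nonempty and is meet-closed by the proposition preceding the definition of $\delta$, so it has a least element; since $\delta(x,z)$ is by definition that least element, we get $\delta(x,z) \le E_1 \vee E_2 = \delta(x,y) \vee \delta(y,z)$.

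I do not anticipate any real obstacle: the only content is the functoriality lemma, and within it the sole point needing care is citing coherence at the correct triple of lattice elements and checking that the values $x/E'$ in play are genuinely defined — pure bookkeeping, handled by the label computation above. As a side benefit the argument also shows that $\delta(x,z)$ is defined whenever $\delta(x,y)$ and $\delta(y,z)$ are, so the stated inequality is never vacuous.
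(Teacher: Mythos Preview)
Your proof is correct and follows essentially the same approach as the paper's: both argue that from $x/E_1 = y/E_1$ and $y/E_2 = z/E_2$ one passes up to the join to obtain $x/(E_1\vee E_2) = z/(E_1\vee E_2)$, whence $\delta(x,z)\le E_1\vee E_2$. The paper compresses this into three lines, taking the monotonicity step $x/E = y/E \Rightarrow x/E' = y/E'$ (for $E\le E'$) as evident from coherence and $\UU_U$-closedness, whereas you isolate and prove that functoriality lemma explicitly; the extra care about labels and well-definedness of $x/E'$ is sound but not something the paper pauses over.
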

		\begin{proof}
			Suppose $\delta(x_1, x_2) = F$, $\delta(x_2, x_3) = F'$. Let $a = x_1/(F \join F')$ and $b = x_3/(F \join F')$. Then $a = x_2/(F \join F') = b$, so $\delta(x_1, x_3) \leq F \join F'$.
		\end{proof}
		
	However, the function $\delta$ is technically not a $\Lambda$-ultrametric, or even a $\Lambda$-pseudoultrametric, since in general $\delta(x, x) \neq \bbzero$. Note that $\delta$ encodes all the information present in the family $\set{U_{E, E'}}$.

\begin{definition}
To any $\Lambda$-ultrametric space $A$, we associate a structure $A^{eq}$, such that if $A \in \AA_\Lambda$, then $A^{eq} \in \KK_0$, as follows.	
\begin{enumerate}
	\item The universe of $A^{eq}$ is $\sqcup_{E \in \Lambda} A/E$.
	\item For each $E \in \Lambda$, label the elements of $A/E$ with the predicate $P_{E, 1}$.
	\item For each $E, E' \in \Lambda$ with $E < E'$, let $U_{E,E'}(x,y)$ hold if $P_{E, 1}(x)$, $P_{E', 1}(y)$, and $y$ represents the $E'$-class containing the $E$-class that $x$ represents.
\end{enumerate}
\end{definition}
Note that this is only a fragment of the full model-theoretic $A^{eq}$, since we are not adding equivalence classes for equivalence relations definable on $A^n$ for $n>1$.

Conversely, to any $K \in \KK_0$, we can associate a structure $A_K \in \AA_\Lambda$. The following construction can be viewed as considering each point in $K$ as representing an equivalence class and picking a generic point in each class, i.e. points that are no closer to each other than necessary.

\begin{definition} \label{def:AK}
 Let $K \in \KK_0$. For each $x \in K$, create a corresponding point $x_A \in A_K$. Then, let $d(x_A, x_A) = \bbzero$, and let distances between distinct points be defined by $d(x_A, y_A) = \delta(x, y)$. By Proposition \ref{proposition:triangle}, the result is a $\Lambda$-ultrametric space.
\end{definition}
		
\begin{proposition} \label{proposition:eqsub}
Let $K \in \KK_0$. Then $K$ embeds into $(A_K)^{eq}$.	
\end{proposition}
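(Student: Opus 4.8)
The plan is to write down the obvious candidate embedding $\varphi\colon K\to (A_K)^{eq}$ and to verify the two requirements for an embedding: injectivity, and preservation together with reflection of every relation of $\LL_0$. By Definition \ref{def:AK} and Proposition \ref{proposition:triangle}, $A_K$ is a genuine $\Lambda$-ultrametric space, so $(A_K)^{eq}$ is defined; recall that its universe is $\sqcup_{E\in\Lambda}A_K/E$, that $A_K/E$ is exactly the $P_{E,1}$-part of $(A_K)^{eq}$, and that for $E<E'$ the relation $U_{E,E'}(a,b)$ holds there precisely when $a$ is an $E$-class, $b$ is an $E'$-class, and $a\subseteq b$ as subsets of $A_K$. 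Since in any $\KK_0$-structure the predicates $P_{E,1}$ partition the universe, each $x\in K$ has a unique $E=E(x)$ with $P_{E,1}(x)$, and the only reasonable choice is $\varphi(x):=[x_A]_{E(x)}$, the $E(x)$-class of the point $x_A$ in $A_K$. With this choice $\varphi$ preserves and reflects the unary predicates for free, since $\varphi(x)\in A_K/E(x)$.

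The one arithmetic input driving the rest is the identity $d(x_A,y_A)=\delta(x,y)$ for distinct $x,y\in K$ (Definition \ref{def:AK}), where $\delta(x,y)$ is the least $G\in\Lambda$ with $x/G=y/G$. Coherence makes the condition $x/G=y/G$ upward closed in $G$, so for $G\geq E(x)\join E(y)$ one gets: $x/G=y/G$ holds in $K$ if and only if $\delta(x,y)\leq G$ if and only if $x_A$ and $y_A$ lie in a common $G$-class of $A_K$. (Here I use the convention, implicit in the paper, that $x/E(x)=x$, i.e. $\delta(x,x)=E(x)$.) Injectivity of $\varphi$ is then immediate: if $\varphi(x)=\varphi(y)$ then $E(x)=E(y)=:E$ and $x_A,y_A$ share an $E$-class, so were $x\neq y$ we would get $\delta(x,y)\leq E=E(x)\join E(y)\leq\delta(x,y)$, forcing $x/E=y/E$, i.e. $x=y$, a contradiction.

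For the binary relations I would check both implications directly. If $U_{E,E'}(x,y)$ holds in $K$, then by condition (2) of the definition of $\KK_0$ we have $E(x)=E$ and $E(y)=E'$ (so $x\neq y$), while $x/E'=y=y/E'$ gives $\delta(x,y)\leq E'$; hence $x_A$ lies in the $E'$-class of $y_A$, so $\varphi(x)=[x_A]_E\subseteq[x_A]_{E'}=[y_A]_{E'}=\varphi(y)$, which is exactly $U_{E,E'}(\varphi(x),\varphi(y))$ in $(A_K)^{eq}$. Conversely, if $U_{E,E'}(\varphi(x),\varphi(y))$ holds in $(A_K)^{eq}$, then $\varphi(x)\in A_K/E$ and $\varphi(y)\in A_K/E'$, whence $E(x)=E$, $E(y)=E'$, $x\neq y$, and $[x_A]_E\subseteq[y_A]_{E'}$; the inclusion gives $d(x_A,y_A)\leq E'$, hence $\delta(x,y)\leq E'$, hence $x/E'=y/E'=y$, which is $U_{E,E'}(x,y)$ in $K$. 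Since $\LL_0$ contains only the predicates $P_{E,1}$ and the relations $U_{E,E'}$, this exhibits $\varphi$ as an embedding.

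I do not anticipate a genuine obstacle here: once the dictionary between $\delta$ on $K$ and distances in $A_K$ is set up, every step is short. The only care needed is bookkeeping — pinning down the conventions around $x/E$ for $x\in P_{E,1}$ and around the (a priori partial) function $\delta$; noting via Proposition \ref{proposition:triangle} that $A_K$ really is a $\Lambda$-ultrametric space before speaking of $(A_K)^{eq}$; and keeping the case $x=y$ out of the steps that equate $d(x_A,y_A)$ with $\delta(x,y)$, which is automatic, since in each such step $x$ and $y$ are forced to carry different predicates.
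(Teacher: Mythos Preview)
Your proof is correct and follows essentially the same approach as the paper: both define the map $x\mapsto x_A/E(x)$ and verify it is an embedding. The only cosmetic difference is that the paper checks that the map preserves $\delta$ and the predicates $P_{E,1}$ (invoking the earlier remark that $\delta$ encodes all of the $U_{E,E'}$-information), whereas you verify preservation and reflection of each $U_{E,E'}$ directly; the underlying computation is identical.
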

\begin{proof}
	For each $x \in K$, if $P_{E, 1}(x)$, we map $x$ to $x_A/E \in (A_K)^{eq}$. 
	
	Suppose, for $x \in K$, that $P_{E, 1}(x)$, and let $y=x$. Then $\delta(x, y) = E = \delta(x_A/E, y_A/E)$. 
	
	Now suppose $x, y \in K$ with $P_{E, 1}(x)$ and $P_{E', 1}(y)$ and $x \neq y$. Let $\delta(x, y) = F$. Then $d(x_A, y_A) = F$. Then in $(A_K)^{eq}$, the least $G \in \Lambda$ such that $x_A/G = y_A/G$ is $G=F$. Since $E, E' < F$, this means $\delta(x_A/E, y_A/E') = F$ as well. Thus, our map preserves the family $\set{P_{E,1}}$ and $\delta$, and so gives an embedding of $K$ into $(A_K)^{eq}$.
\end{proof}

Thus $\KK_0$ is exactly the closure under $\UU_U$-closed substructure of the class obtained by applying the $eq$ operation to $\AA_\Lambda$. We call such structures \textit{upward-closed}. We now consider an additional closure condition.
		
\begin{definition} \label{def: dclosed}
We say $K \in \KK_0$ is \textit{downward closed} if for any $x, y \in K$ such that $P_{E, 1}(x)$, $P_{F, 1}(y)$, and $\delta(x, y) = E \join F$, there is some $z \in K$ such that $P_{E \meet F, 1}(z)$ and $z \leq_U x,y$. 
\end{definition}
		
	\begin{lemma} \label{lemma:closure0}
	Let $K \in \KK_0$. Then there is a finite $\KK_0$-structure $cl_0(K)$ such that
	\begin{enumerate}
	\item $K$ embeds into $cl_0(K)$
	\item $cl_0(K)$ is downward closed
	\end{enumerate}
	\end{lemma}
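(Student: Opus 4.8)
The plan is to realize $cl_0(K)$ inside the imaginaries of the generic $\Lambda$-ultrametric space $\Gamma$, obtaining finiteness for free from $\omega$-categoricity, as flagged in the introduction. First I would embed $K$ into $\Gamma^{eq}$ (meaning the $\LL_0$-structure associated to $\Gamma$ as in the definition preceding Proposition~\ref{proposition:eqsub}). By Proposition~\ref{proposition:eqsub}, $K$ embeds into $(A_K)^{eq}$, and by Definition~\ref{def:AK} together with universality of the \fraisse limit, $A_K$ embeds into $\Gamma$. The assignment $A \mapsto A^{eq}$ is functorial for embeddings of $\Lambda$-ultrametric spaces: an embedding $\iota\colon A \hookrightarrow B$ induces injections $A/E \hookrightarrow B/E$ preserving each $P_{E,1}$ and each $U_{E,E'}$ in both directions, since $\iota$ preserves each distance relation in both directions. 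Composing, we get an $\LL_0$-embedding of $K$ into $\Gamma^{eq}$; henceforth identify $K$ with a finite substructure of $\Gamma^{eq}$.

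Next, let $cl_0(K)$ be the substructure of $\Gamma^{eq}$ on the universe $\operatorname{acl}(K)$, where the algebraic closure is computed in the $\LL_0$-structure $\Gamma^{eq}$. Since $\Gamma$ is the \fraisse limit of an amalgamation class in a finite relational language it is $\omega$-categorical, and $\Gamma^{eq}$, obtained from $\Gamma$ by adjoining the finitely many interpretable sorts $\Gamma/E$ ($E \in \Lambda$), is $\omega$-categorical as well; hence $\operatorname{acl}(K)$ is finite, and $K \subseteq \operatorname{acl}(K) = cl_0(K)$, giving part (1) of the lemma. That $cl_0(K) \in \KK_0$ is then routine: conditions (1)--(4) defining $\KK_0$ are inherited by substructures of $\Gamma^{eq}$ (being universal, or ``partition''/``at most one'' conditions); and $cl_0(K)$ is $\UU_U$-closed because for $x \in cl_0(K)$ with $P_{E,1}(x)$ the unique $y$ with $U_{E,E'}(x,y)$, namely $y = x/E'$, is definable over $x$ and so again lies in $\operatorname{acl}(K)$.

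For part (2), suppose $x, y \in cl_0(K)$ with $P_{E,1}(x)$, $P_{F,1}(y)$ and $\delta(x,y) = E \vee F$; write $x = a/E$, $y = b/F$ with $a,b \in \Gamma$, so $d(a,b) \leq E \vee F$. A short lattice computation (where distributivity of $\Lambda$ is used to check the triangle inequality) shows that $\{a,b\}$ extends to a finite $\Lambda$-ultrametric space by a point $c$ with $d(a,c) = E \wedge (F \vee d(a,b)) \leq E$ and $d(b,c) = F \wedge (E \vee d(a,b)) \leq F$; by the extension property of $\Gamma$ we may take such a $c$ in $\Gamma$. Then $z := c/(E \wedge F)$ satisfies $P_{E\wedge F,1}(z)$ and $z \leq_U x$, $z \leq_U y$, and by downward semi-closure (condition (4) of $\KK_0$, which holds in $\Gamma^{eq}$) it is the \emph{unique} element with these properties, hence definable over $\{x,y\} \subseteq K$ and so $z \in \operatorname{acl}(K) = cl_0(K)$; this is exactly the witness required by Definition~\ref{def: dclosed}, so $cl_0(K)$ is downward closed. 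The main obstacle is the bookkeeping of the second paragraph — confirming that the algebraic-closure substructure genuinely lies in $\KK_0$, in particular that it is $\UU_U$-closed; once that is in place, finiteness is immediate from $\omega$-categoricity and downward closure reduces to a single application of the extension property of $\Gamma$ together with the uniqueness built into the definition of $\KK_0$.
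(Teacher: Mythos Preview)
Your proof is correct and follows essentially the same route as the paper: embed $K$ into $\Gamma^{eq}$ via $(A_K)^{eq}$, take the algebraic closure there, and use the extension property of $\Gamma$ together with downward semi-closure to get the witness $z$ into $\operatorname{acl}$. The only minor differences are that the paper takes $cl_0(K)=\operatorname{acl}((A_K)^{eq})$ rather than $\operatorname{acl}(K)$ (yours is contained in theirs, and either works), and that the paper chooses representatives $x',y'$ with $\delta(x',y')=E\vee F$ exactly, whereas you allow $d(a,b)\le E\vee F$ and compensate with the distributive computation for $d(a,c),d(b,c)$; your version is a touch more careful there, and you are also more explicit than the paper about invoking $\omega$-categoricity for finiteness.
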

	\begin{proof}
	By Theorem \ref{theorem:amalg}, $\AA_\Lambda$ is an amalgamation class. Let $\Gamma$ be the Fra\"\i ss\'e limit of $\AA_\Lambda$. Embed $A_K$ into $\Gamma$. Then $(A_K)^{eq}$ is contained in $\Gamma^{eq}$. Let $cl_0(K)$ be the algebraic closure of $(A_K)^{eq}$ in $\Gamma^{eq}$.
	
	Given $x \in cl_0(K)$, with $P_{E, 1}(x)$, for any $E' > E$, $x/E'$ is definable from $x$ by the formula $\phi(y) = U_{E, E'}(x, y)$, and so is in its algebraic closure. Thus $cl_0(K)$ is $\UU_{U}$-closed.
	
	By Proposition \ref{proposition:eqsub}, $(1)$ already holds of $(A_K)^{eq}$.
	
	 We now prove $(2)$. Let $x', y' \in \Gamma$, with $d(x', y') = E \join F$. Since the structure $A = \set{x', y', z'}$, with $d(x', z') = E$, $d(y', z') = F$, $d(x', y') = E \join F$, satisfies the triangle inequality, we have $A \in \AA_\Lambda$. Thus, as $\Gamma$ is the Fra\"\i ss\'e limit of $\AA_\Lambda$, there is some $z' \in \Gamma$ such that $d(x', z') = E$ and $d(y', z') = F$. Given $x, y \in cl_0(K)$ as in Definition \ref{def: dclosed}, there exist $x', y' \in \Gamma^{eq}$ such that $P_{\bbzero, 1}(x')$, $P_{\bbzero, 1}(y')$,  $x'/E = x$, $y'/F = y$, and $\delta(x', y') = E \join F$. Then there is a $z' \in \Gamma^{eq}$ such that $\delta(x', z') = E$, $\delta(y', z') = F$. Thus $z' \leq_U x, y$, and so we may take $z=z'/(E \meet F) \leq_U x, y$. Finally, there can be at most one such $z$, since there is at most one $E \meet F$-class contained in any given $E$-class and $F$-class, and so $z$ is in the algebraic closure of $\set{x, y}$.
	\end{proof}

We now define the full class to which we will lift structures from $\vec \AA_\Lambda$. This will combine adding elements representing equivalence classes with the technique of duplicating the structure and connecting the parts by bijections used in Proposition 4.31 of \cite{HN} for structures with multiple linear orders. Since we are using subquotient orders instead of linear orders, we only need to duplicate part of the structure for each subquotient order.

The reason multiple structures are used to handle multiple linear orders is that Theorem \ref{theorem:NR}, which we plan to use to provide an $\RR$ for Theorem \ref{theorem:multiamalg}, provides a class with only a single linear order. Thus, in \cite{HN}, each linear order is placed on a single copy of the structure, and the copies are ordered one after another to form a single linear order.

The relation $D_{E_1, E_2}(x_1, x_2, y)$ in the definition below is meant to be viewed as stating that $x_1$ and $x_2$ represent an $E_1$ and $E_2$ class, respectively, and $y$ represents their intersection. This intersection of equivalence classes is the reason the algebraic closure operation in the class below will be binary.
	
	\begin{definition} \label{def:K}
	For each $E \in \Lambda$ let $N_E \geq 1$, and let $<_{1-types}$ be a linear order on $\set{(E, i) | E \in \Lambda, i \in [N_E]}$. Relative to these parameters, we define $\KK$, a class of structures in the relational language
		$$\LL = \LL_0 \cup \set{\set{P_{E, i}}_{E \in \Lambda, 2 \leq i \leq N_{E}}, \set{B_{E, i, j}}_{E \in \Lambda, i,j \in [N_E]}, \set{D_{E, E'}}_{E \neq E' \in \Lambda}, D^\exists, <}$$ 
	where the relations $P_{E, i}$ are unary, the $B_{E, i, j}$ are binary, the $D_{E, E'}$ are ternary, $D^\exists$ is binary, and $<$ is binary. Let $\KK$ consist of all finite $\LL$-structures for which the following hold.
			
	\begin{enumerate}
		\item The family $\set{P_{E, i}}_{E \in \Lambda, i \in [N_E]}$ forms a partition such that classes that agree in the first index have the same cardinality.
		\item The substructure on the points $x$ such that $P_{E, 1}(x)$ holds for some $E \in \Lambda$ is an $\LL$-expansion of a $\KK_0$-structure.
		\item $<$ is a linear order, which agrees with $<_{1-types}$ between 1-types, i.e. if $P_{E, i}(x), P_{F, j}(y)$, $(E, i) \neq (F, j)$, then $x < y \Rightarrow (E, i) <_{1-types} (F, j)$.
		\item $D^\exists(x, y)$ iff there exists a $z$ such that $U_{E, E \join E'}(x, z)$, $U_{E', E \join E'}(y, z)$.
		\item If $D_{E_1, E_2}(x_1, x_2, y)$, then $P_{E_1, 1}(x_1)$, $P_{ E_2, 1}(x_2)$, $D^\exists(x_1, x_2)$, $P_{E_1 \meet E_2, 1}(y)$, and $y \leq_U x_1, x_2$.
		\item $B_{E, i, j}$ is the graph of a bijection from the points of $P_{E, i}$ to the points of $P_{E, j}$.
		\item If $B_{E, i, j}(x, y)$ and $B_{E, j, k}(y, z)$, then $B_{E, i, k}(x, z)$.
	\end{enumerate}
	\end{definition}
	\begin{definition}		
	We also define a closure description $\UU_{\KK}$ for $\LL$, in which the relations $U_{E, E'}$, $B_{E, i, j}$, and $D_{E, E'}$ are closure relations. The root of $U_{E, E'}$ is a single point $x$ such that $P_{E, 1}(x)$. The root of $B_{E, i, j}$ is a single point $x$ such that $P_{E, i}(x)$. The root of $D_{E, E'}$ is a pair of points $x_1, x_2$ such that $P_{E, 1}(x_1)$, $P_{E', 1}(x_2)$, $D^\exists(x_1, x_2)$, $U_{E, E'}(x_1, x_2)$ if $E < E'$ or $U_{E', E}(x_2, x_1)$ if $E' < E$, and $x_1 < x_2$ if $(E, 1) <_{1-types} (E', 1)$ or $x_2 < x_1$ if $(E', 1) <_{1-types} (E, 1)$. 
	
	\end{definition}
	Although $\KK$ is not closed under taking substructures, the class of $\UU_K$-closed $\KK$-structures is closed under taking $\UU_K$-closed substructures.
	
	\begin{definition}The \textit{metric part} of $K \in \KK$ is the $\KK_0$-structure appearing in condition $(2)$ of Definition \ref{def:K}, with language $\LL_0$. 
	\end{definition}
	
	\begin{remark}
	For $K \in \KK$, we can assign a distance $\delta(x,y)$ between two points in the non-metric part of $K$ as well, by taking the distance between the points $x$ and $y$ are in bijection with in the metric part of $K$. 
	\end{remark}

	\begin{lemma} \label{lemma:Kextension}
	Let $K \in \KK$, let $K_0$ be the metric part of $K$, and let $K_0'$ be a $\KK_0$-structure containing $K_0$. Then there is a $\KK$-structure $K'$ such that $K \subset K'$ and the metric part of $K'$ is $K_0'$.
	
	Furthermore, if $K_0'$ is downward closed, $K'$ can be taken to be $\UU_{\KK}$-closed.
	\end{lemma}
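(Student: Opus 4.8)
The plan is to build $K'$ explicitly: adjoin to $K$ the metric points that $K_0'$ has over $K_0$, together with the bijective copies the axioms force; transplant the $\LL_0$-structure of $K_0'$ onto the metric part; and then extend the remaining relations of $\LL$ either as dictated by the axioms or as freely as they permit.

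\textbf{Construction.} I would take the universe of $K'$ to be $K$ together with one new point for each element of $K_0' \setminus K_0$ (placed in the predicate $P_{E,1}$ it occupies in $K_0'$) and, for each $E \in \Lambda$ and each $2 \le i \le N_E$, a set of $m_E := |P_{E,1}^{K_0'}| - |P_{E,1}^{K_0}|$ fresh points placed in $P_{E,i}$; since every class $P_{E,i}$ then grows by the same number $m_E$, condition (1) of Definition~\ref{def:K} is maintained. I would give the metric part (the union of the $P_{E,1}$) the $\LL_0$-structure of $K_0'$, which extends that of $K_0$ and yields condition (2), and which in turn determines $D^\exists$ by condition (4); because $K_0$ is upward-closed one has $x/(E\join E') \in K_0$ for $x \in K_0$, so the new $D^\exists$ restricts to the old one on pairs from $K$. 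I would extend the bijections coherently: for each $E,i$ pick a bijection from the $m_E$ new points of $P_{E,1}$ to the $m_E$ new points of $P_{E,i}$ (the identity for $i=1$), and let $B_{E,i,j}^{K'}$ be $B_{E,i,j}^K$ together with the induced bijection between new points; as old points pair with old and new with new, conditions (6) and (7) survive. I would extend $<$ by linearly ordering each $P_{E,i}^{K'}$ so as to extend $<^K \upharpoonright P_{E,i}^K$ (say, new points last) and, between distinct $1$-types, following $<_{1-types}$; since $<^K$ already obeyed condition (3), the result is a linear order extending $<^K$ that obeys (3). Finally I would set $D^{K'} := D^K$. Checking conditions (1)--(7) is then routine, so $K' \in \KK$; and $K$ is a substructure, since $U$ and $D^\exists$ involve only metric points and restrict correctly, $B$ restricts correctly by construction, $D^{K'} = D^K$, and $<^{K'} \upharpoonright K = <^K$ (within a $1$-type by construction, between $1$-types because both orders follow $<_{1-types}$). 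This gives the first assertion.

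\textbf{The $\UU_{\KK}$-closed case.} Assume now $K_0'$ is downward closed (Definition~\ref{def: dclosed}). Of the closure relations of $\UU_{\KK}$, the relations $U_{E,E'}$ are already closed in $K'$ (because $K_0'$ is $\UU_U$-closed) and the $B_{E,i,j}$ are closed (each being a total bijection), so the remaining work is to close the ternary relations $D_{E,E'}$. Given a root $(x_1,x_2)$ of $D_{E,E'}$ in $K'$ — necessarily with $P_{E,1}(x_1)$, $P_{E',1}(x_2)$, $D^\exists(x_1,x_2)$, and both points in $K_0'$ — I would observe that any $G$ with $x_1/G = x_2/G$ satisfies $G \ge E$ and $G \ge E'$, hence $G \ge E \join E'$, while $D^\exists(x_1,x_2)$ forces $x_1/(E\join E') = x_2/(E\join E')$; therefore $\delta(x_1,x_2) = E \join E'$. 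This is exactly the hypothesis in the definition of downward-closedness, so $K_0'$ contains a point $z$ with $P_{E\meet E',1}(z)$ and $z \leq_U x_1,x_2$, which is unique by downward semi-closure; note that $z$ already lies in $K_0'$, so no metric points are added and the metric part stays $K_0'$. I would then put $D^{K'}_{E,E'}(x_1,x_2,z)$ for this $z$, for every root; condition (5) is preserved, and since condition (5) pins down the witness of a root uniquely, the new $D$-instances are consistent with $D^K$, so $K$ remains a substructure. Hence $K'$ is $\UU_{\KK}$-closed.

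\textbf{Expected main obstacle.} Everything but the last paragraph is bookkeeping; the one point with genuine content is the computation $\delta(x_1,x_2) = E \join E'$ for a $D^\exists$-related pair consisting of an $E$-class and an $E'$-class. This is precisely what converts downward-closedness of $K_0'$ into the existence of the witnesses needed to close the binary-algebraic-closure relations $D_{E,E'}$, with downward semi-closure guaranteeing uniqueness — so that the closure stays inside $K_0'$ (keeping the metric part exactly $K_0'$) and the $\KK$-axioms are undisturbed.
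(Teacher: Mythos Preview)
Your proof is correct and follows essentially the same construction as the paper: adjoin the new metric points of $K_0'$, add their bijective copies in each $P_{E,i}$, extend $<$ compatibly with $<_{1\text{-types}}$, and in the downward-closed case use the witnesses provided by downward closure to populate the $D_{E,E'}$ relations. Your treatment is in fact more careful than the paper's, which does not explicitly verify that $K$ is a substructure or spell out the computation $\delta(x_1,x_2)=E\vee E'$.
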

\begin{proof}
For any $x, y \in K_0'$ such that for some $E, F \in \Lambda$, $P_{E, 1}(x)$, $P_{F, 1}(y)$, and $\delta(x, y) = E \join F$, add the relation $D^\exists(x, y)$. Furthermore, if $K_0'$ is downward-closed, there is a $z \leq_U x, y$ such that $P_{E \meet F}(z)$, so add the relation $D_{E, F}(x, y, z)$. 

Then, for every $x_1 \in K_0' \bs K_0$, perform the following. Let $E \in \Lambda$ be such that $P_{E, 1}(x_1)$.
	\begin{enumerate}
		\item for every $2 \leq i \leq N_E$, add a point $x_i$ to $P_{E, i}$
		\item for every $ i, j \in [N_E]$, add the relation $B_{E, i, j}(x_i, x_j)$  
	\end{enumerate}
Finally, complete $<$ arbitrarily to a linear order so that it still agrees with $<_{1-types}$ between 1-types.
\end{proof}
	
	\begin{lemma} \label{lemma:closure1}
	Let $K \in \KK$. Then there is a $\UU_{\KK}$-closed $\KK$-structure $cl(K)$ such that $K$ is a substructure of $cl(K)$.
	\end{lemma}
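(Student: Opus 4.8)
\emph{Proof proposal.} The plan is to build $cl(K)$ in two stages, reusing the two closure lemmas already proved. First I would pass to the metric part $K_0$ of $K$, which is a $\KK_0$-structure, and apply Lemma~\ref{lemma:closure0} to obtain a finite \emph{downward closed} $\KK_0$-structure $cl_0(K_0)$ containing $K_0$ (identifying $K_0$ with its image under the embedding). Then I would feed $K$ together with $K_0' := cl_0(K_0)$ into Lemma~\ref{lemma:Kextension}: since $K_0'$ is downward closed, the ``furthermore'' clause of that lemma yields a $\UU_{\KK}$-closed $\KK$-structure $K'$ with $K \subseteq K'$ and metric part $cl_0(K_0)$, and I would simply set $cl(K) := K'$.

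The only thing left to justify is that $K'$ really is $\UU_{\KK}$-closed, and I would organize this around the three closure relations $U_{E,E'}$, $B_{E,i,j}$, $D_{E,E'}$. The $U_{E,E'}$-out-degrees are determined by the metric part alone, which is the genuine $\KK_0$-structure $cl_0(K_0)$ and hence already $\UU_U$-closed. The $B_{E,i,j}$-out-degrees are forced by conditions $(6)$ and $(7)$ of Definition~\ref{def:K}, which make the $B$'s in any $\KK$-structure a coherent family of total bijections between the relevant $P$-classes, so each point of $P_{E,i}$ has a unique $B_{E,i,j}$-image; that is exactly $\UU_{\KK}$-closedness for these relations. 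I expect no obstacle in either of these.

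The one case with actual content is $D_{E,E'}$, and this is precisely where downward closure of the metric part gets consumed. I would first note that, for $x_1 \in P_{E,1}$ and $x_2 \in P_{E',1}$, condition $(4)$ of Definition~\ref{def:K} makes $D^\exists(x_1,x_2)$ equivalent to $x_1/(E\join E') = x_2/(E\join E')$, i.e.\ to $\delta(x_1,x_2) = E\join E'$ (the inequality $\delta(x_1,x_2)\geq E\join E'$ being automatic, since $x_i/G$ is only defined for $G$ above the type of $x_i$). Hence a pair $(x_1,x_2)$ is an embedded copy of the root of $D_{E,E'}$ exactly when $P_{E,1}(x_1)$, $P_{E',1}(x_2)$ and $\delta(x_1,x_2) = E\join E'$, the remaining root requirements (the relevant $U_{E,E'}$ or $U_{E',E}$ instance, and agreement of $<$ with $<_{1\text{-}types}$) then holding automatically. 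For such a root, downward closure of $cl_0(K_0)$ supplies a point $z$ with $P_{E\meet E',1}(z)$ and $z \leq_U x_1, x_2$, unique because at most one $(E\meet E')$-class lies below a given $E$-class and $E'$-class; the construction in Lemma~\ref{lemma:Kextension} adds exactly $D_{E,E'}(x_1,x_2,z)$ for this $z$, and by condition $(5)$ of Definition~\ref{def:K} it adds no $D$-relation off of a root, so the $D_{E,E'}$-out-degrees come out as required.

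In short, this lemma is essentially a bookkeeping corollary of Lemmas~\ref{lemma:closure0} and~\ref{lemma:Kextension}; all the genuine work is inside Lemma~\ref{lemma:closure0} (finiteness of the closure, obtained there from algebraic closure in $\Gamma^{eq}$), which I take as given. The only mild worry is whether some closure condition is secretly violated off the metric part — but every closure relation other than the $B$'s lives entirely on the points satisfying some $P_{E,1}$, and the $B$'s are handled by the bijection axioms, so nothing goes wrong there.
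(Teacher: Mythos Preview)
Your proposal is correct and follows exactly the same two-step approach as the paper: take the metric part $K_0$, apply Lemma~\ref{lemma:closure0} to get a downward closed $cl_0(K_0)$, then invoke Lemma~\ref{lemma:Kextension} with $K_0' = cl_0(K_0)$ and use its ``furthermore'' clause. The paper's proof is in fact just those three sentences; your additional case-by-case verification of $\UU_{\KK}$-closedness for $U$, $B$, and $D$ is a reasonable unpacking of what that clause asserts, but strictly speaking it belongs to the proof of Lemma~\ref{lemma:Kextension} rather than here.
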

\begin{proof}
	Let $K_0$ be the metric part of $K$. Let $cl_0(K_0)$ be as in Lemma \ref{lemma:closure0}. Then apply Lemma \ref{lemma:Kextension} to $K$ with $K_0' = cl_0(K_0)$, and let $cl(K)$ be the resulting $K'$.
\end{proof}

\section{Transfer}

In this section, we show that to prove $\vec \AA_\Lambda$ is a Ramsey class, it is sufficient to prove that the class of $\UU_\KK$-closed $\KK$-structures is a Ramsey class.

The first definition describes how we lift an $\vec \AA_\Lambda$-structure to a $\KK$-structure.

\begin{definition} \label{def:prelift}		
Let $\vec A \in \vec{\AA_\Lambda}$, and let $\vec \Gamma$ be the Fra\"\i ss\'e limit of $\vec \AA_\Lambda$. Before we describe how to lift an $\vec \AA_\Lambda$-structure into $\KK$, we first fix the following parameters and notation.
\begin{enumerate}[(a)]
\item For each meet-irreducible $E \in \Lambda$, let $N_E$ be the number of subquotient orders with bottom-relation $E$, and for each meet-reducible $E \in \Lambda$, let $N_E = 1$.
\item Enumerate the subquotient orders with bottom-relation $E$ as $<_{E, i}$ for ${i \in [N_E]}$.
\item For each $E \in \Lambda$, choose $F'_E$ a cover of $E$, and for $E$ meet-reducible choose $F''_E > E$ such that $E = F'_E \meet F''_E$.
\item For each element of $\set{(E, i)|E \in \Lambda, i \in [N_E]}$ with $E$ meet-reducible, use the construction in Lemma \ref{lemma:sqoproduct}, with the above choices of $F'_E$ and $F''_E$, to produce  a quantifier-free formula $\phi_{E, i}$ defining a subquotient order with bottom relation $E$ and top relation $\bbone$ on $\vec \Gamma$.

For $E$ meet-irreducible, use Corollary \ref{corollary:extendsqo} instead. 

Finally, as noted in Remark \ref{rem:choice}, we may assume that whenever the construction has to choose between multiple subquotient orders with a given bottom relation, it chooses the first in our enumeration.
\item Fix a linear order $<_{1-types}$ on $\set{(E, i)| E \in \Lambda, i \in [N_E]}$. 
\item Let $A$ be the metric part of $\vec A$.
\end{enumerate}

Note that, although $\phi_{E, i}$ is defined on elements of $\vec A$, it naturally induces a linear order on the elements of $P_{E, i}$ in $A^{eq}$.
We now associate a $\KK$-structure to $\vec A \in \vec \AA_\Lambda$. Let $L_\KK(\vec A)$ be as follows:
		
	\begin{enumerate}
		\item Construct $A^{eq}$.
		\item For each $E \in \Lambda$, $2 \leq i \leq N_E$, create a copy of the elements of $P_{E, 1}$, and label the elements of that copy with the predicate $P_{E, i}$.
		\item For each $E \in \Lambda$, for each $i, j \in [N_E]$, let $B_{E, i, j}(x, y)$ if $P_{E, i}(x)$, $P_{E, j}(y)$ and $x$ and $y$ represent the same $E$-class.
		\item For each $E \in \Lambda$, $i \in [N_E]$, define $<$ on the elements of $P_{E, i}$ to agree with the order induced by $\phi_{E, i}$ on those elements.
		\item Extend $<$ to a total order by setting $x < y$ if $P_{E, i}(x)$, $P_{F, j}(y)$, and $(E, i) <_{1-types} (F, j)$.
	\end{enumerate}
\end{definition} 

This gives a canonical lifting from $\vec \AA_\Lambda$ to $\KK$. However, we would like to lift elements of $\vec \AA_\Lambda$ to \textit{$\UU_\KK$-closed} $\KK$-structures, which will be done as follows. First, note that we can also apply the $L_\KK$-construction to $\vec \Gamma$.

\begin{definition} \label{def:lift}
Let $\vec A \in \vec{\AA_\Lambda}$, and let $\vec \Gamma$ be the Fra\"\i ss\'e limit of $\vec \AA_\Lambda$. Embed $\vec A$ into $\vec \Gamma$. This induces an embedding of $L_\KK(\vec A)$ into $L_\KK(\vec \Gamma)$, and let $\Lift(\vec A)$ be the algebraic closure of $L_\KK(\vec A)$ in $L_\KK(\vec \Gamma)$.
\end{definition}

By the proof of Lemma \ref{lemma:closure0}, $\Lift(\vec A)$ will be $\UU_\KK$-closed.

\begin{proposition} \label{prop:liftfunc}
Suppose $\vec A, \vec B \in \vec \AA_\Lambda$ and $\vec A$ embeds into $\vec B$. This induces an embedding from $L_\KK(\vec A)$ into $L_\KK(\vec B)$, and there is an embedding of $\Lift(\vec A)$ into $\Lift(\vec B)$ that extends this embedding. In particular, $\Lift(\vec A)$ is well-defined up to isomorphism over $\vec A$.
\end{proposition}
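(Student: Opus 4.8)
The plan is to reduce the proposition to the functoriality of the algebraic closure operation in $L_\KK(\vec\Gamma)$, which is where the statement really lives. Given an embedding $f\colon \vec A \to \vec B$, I would first observe that the $L_\KK$-construction is functorial on embeddings in a completely explicit way: the $eq$-construction sends $f$ to an embedding $A^{eq} \to B^{eq}$ (it acts on a point representing an $E$-class by sending it to the point representing the image $E$-class, which is well-defined since embeddings of $\Lambda$-ultrametric spaces preserve and reflect all distances), and the remaining steps of Definition \ref{def:prelift} — adding the $P_{E,i}$-copies, the bijection graphs $B_{E,i,j}$, and the order $<$ — are all determined pointwise by the metric part together with the fixed global data ($N_E$, the enumeration of subquotient orders, the formulas $\phi_{E,i}$, and $<_{1-types}$), so $f$ lifts canonically to an embedding $L_\KK(f)\colon L_\KK(\vec A)\to L_\KK(\vec B)$. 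The one point that needs a word is that $<$ is preserved: on each $P_{E,i}$ the order is induced by the quantifier-free formula $\phi_{E,i}$, which is preserved by embeddings of $\vec\Gamma$-substructures, and between $1$-types it is dictated by $<_{1-types}$; hence $L_\KK(f)$ is genuinely an embedding.

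Next I would handle the two embeddings of $\vec A$ and $\vec B$ into $\vec \Gamma$ used to define $\Lift$. Fix embeddings $g\colon \vec A\to\vec\Gamma$ and $h\colon\vec B\to\vec\Gamma$. Since $\vec\Gamma$ is homogeneous (as the Fra\"\i ss\'e limit of $\vec\AA_\Lambda$) and $h\circ f$ and $g$ are both embeddings of $\vec A$ into $\vec\Gamma$, there is an automorphism $\alpha$ of $\vec\Gamma$ with $\alpha\circ g = h\circ f$. Now $\alpha$ induces an automorphism $L_\KK(\alpha)$ of $L_\KK(\vec\Gamma)$ (by the same functoriality, applied to an isomorphism), and this automorphism preserves algebraic closure; it also commutes with the $L_\KK$-construction, so it carries $L_\KK(g)\bigl(L_\KK(\vec A)\bigr)$ onto $L_\KK(h\circ f)\bigl(L_\KK(\vec A)\bigr) = L_\KK(h)\bigl(L_\KK(f)(L_\KK(\vec A))\bigr)$, and hence carries $\mathrm{acl}$ of the former onto $\mathrm{acl}$ of the latter. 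Composing $L_\KK(g)$ with $L_\KK(\alpha)$ therefore gives an embedding of $L_\KK(\vec A)$ into $L_\KK(\vec\Gamma)$ whose image lands inside $L_\KK(h)(L_\KK(\vec B))$ in a way compatible with $L_\KK(f)$, and which sends $\Lift(\vec A)$ into $\mathrm{acl}\bigl(L_\KK(h)(L_\KK(\vec B))\bigr) = $ (the image of) $\Lift(\vec B)$. Composing back with the fixed embedding $L_\KK(h)^{-1}$ onto its image yields the required embedding $\Lift(\vec A)\to\Lift(\vec B)$ extending $L_\KK(f)$.

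For the final sentence — that $\Lift(\vec A)$ is well-defined up to isomorphism over $\vec A$ — I would apply the previous paragraph with $\vec B = \vec A$ and $f = \mathrm{id}$: any two choices of embedding $\vec A\to\vec\Gamma$ produce algebraic closures that are carried onto each other by some $L_\KK(\alpha)$, hence are isomorphic via an isomorphism restricting to the identity on $L_\KK(\vec A)$, and in particular on $\vec A$. (One should also note $\Lift(\vec A)$ is finite, which is exactly the content of Lemma \ref{lemma:closure0} combined with Lemma \ref{lemma:closure1}: the metric part of $L_\KK(\vec A)$ sits inside $\Gamma^{eq}$, its algebraic closure there is $cl_0$ of it which is finite, and the remaining closure relations $B_{E,i,j}, D_{E,E'}$ add only finitely many further points per point.)

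The main obstacle is not any single hard step but rather being careful that all the auxiliary structure added in Definition \ref{def:prelift} is genuinely canonical — i.e.\ that $L_\KK$ really is a functor from (the category of $\vec\AA_\Lambda$-structures with embeddings) to ($\LL$-structures with embeddings), and that an automorphism $\alpha$ of $\vec\Gamma$ really does induce an automorphism of $L_\KK(\vec\Gamma)$ commuting with everything in sight. The subtlety is that the orders $<$ on the blocks $P_{E,i}$ are defined via the formulas $\phi_{E,i}$, which are only quantifier-free formulas over $\vec\Gamma$; one must check that their restriction to any finite substructure is determined by that substructure (true, since $\phi_{E,i}$ is quantifier-free and $\vec A \hookrightarrow \vec\Gamma$ is an embedding) and that they are $\mathrm{Aut}(\vec\Gamma)$-invariant (true, since they are formulas in the language of $\vec\Gamma$). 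Once this bookkeeping is in place, the homogeneity of $\vec\Gamma$ plus invariance of algebraic closure under automorphisms does all the real work.
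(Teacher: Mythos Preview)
Your proof is correct and follows the same underlying idea as the paper---functoriality of $L_\KK$ combined with properties of algebraic closure in $L_\KK(\vec\Gamma)$---but is considerably more detailed. The paper's two-sentence argument simply takes the embedding of $\vec A$ into $\vec\Gamma$ to be the one induced by the chosen embedding of $\vec B$ (so that $\Lift(\vec A)\subseteq\Lift(\vec B)$ directly by monotonicity of $\mathrm{acl}$), whereas you allow arbitrary embeddings $g,h$ and invoke homogeneity of $\vec\Gamma$ to align them via an automorphism; your treatment of the well-definedness claim is correspondingly more explicit than the paper's terse final sentence.
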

\begin{proof}
The definition of the $\Lift$ operation is based on an embedding of $L_\KK(\vec B)$ into $L_\KK(\vec \Gamma)$. Given such an embedding, it induces a corresponding embedding of $L_\KK(\vec A)$, and relative to these embeddings, $\Lift(\vec A)$ will then be a substructure of $\Lift(\vec B)$.

For the final point, take the embedding of $L_\KK(\vec A)$ to be an isomorphism of $L_\KK(\vec A)$ with $L_\KK(\vec B)$, or its inverse.
\end{proof}

Note that the $\Lift$ operation produces only a subset of the structures in $\KK$, since the order cannot be generic within $1$-types, but is controlled by the $(\phi_{E, i})$ and the $(<_{E, i})$, which remain definable in the lifted structure as appropriate restrictions of $<$. The Ramsey property for $\vec \AA_\Lambda$ corresponds more directly to the Ramsey property for the class of lifted structures, but working with $\UU_\KK$-closed $\KK$-structures reduces much of the bookkeeping. The next definition will provide a way to transfer the Ramsey property from $\UU_\KK$-closed $\KK$-structures to the class of lifted structures (or rather its closure under $\UU_\KK$-closed substructure).

\begin{definition} \label{def:qfr}
Fix a relational language $L$. A \textit{quantifier-free reinterpretation scheme} is a family of quantifier-free $L$-formulas $\Phi = \set{\phi_R : R \in L}$ such that $\phi_R$ has $n_R$ free variables, where $n_R$ is the arity of R.

A quantifier-free reinterpretation scheme $\Phi$ naturally induces a function $f_\Phi:\set{\text{$L$-structures}} \to \set{\text{$L$-structures}}$, where $f_\Phi(A)$ is given by reinterpreting each $R \in L$ as $\phi_R$. We call such a function a \textit{quantifier-free reinterpretation}.

Given a class $\RR$ of $L$-structures and a quantifier-free reinterpretation $f_\Phi$, if $f_\Phi$ is a retraction when restricted to $\RR$, we call it a \textit{quantifier-free retraction on $\RR$}. The image of a quantifier-free retraction is a \textit{quantifier-free retract of $\RR$}
\end{definition}

\begin{example}
Let $L$ consist of two binary relations, $<_1, <_2$. Let $\RR$ be the class of all finite $L$-structures where $<_1, <_2$ are linear orders. Then the class $\KK_1 \subset \RR$ consisting of structures where $<_1 = <_2$ is a quantifier-free retract of $\RR$, induced by the quantifier-free reinterpretation scheme $\Phi = \{\phi_{<_1}(x_1, x_2) = x_1 <_1 x_2, \phi_{<_2}(x_1, x_2) = x_1 <_1 x_2 \}$. 

Similarly, the class $\KK_2 \subset \RR$ for which $<_2 = <_1^{opp}$ is a quantifier-free retract, induced by the quantifier-free reinterpretation scheme $\Phi = \{\phi_{<_1}(x_1, x_2) = x_1 <_1 x_2, \phi_{<_2}(x_1, x_2) = x_2 <_1 x_1 \}$.
\end{example}

The first of the above examples essentially appeared in \cite{Bod}, where it was used to argue that if a class of structures with two generic linear orders had the Ramsey property, one could forget one of those linear orders and keep the Ramsey property. We now generalize that argument.

\begin{lemma} \label{lemma:forget}
Let $\KK_1$ be a Ramsey class, and $\KK_2$ a quantifier-free retract of $\KK_1$ such that $\KK_2 \subset \KK_1$. Then $\KK_2$ is a Ramsey class.
\end{lemma}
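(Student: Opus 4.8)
The plan is to transfer a coloring problem on $\KK_2$ to one on $\KK_1$, solve it there using the Ramsey property of $\KK_1$, and then push the monochromatic object back down via the quantifier-free retraction $f_\Phi$. Let $f_\Phi$ be the quantifier-free reinterpretation witnessing that $\KK_2$ is a retract of $\KK_1$, so $f_\Phi$ restricts to a retraction $\KK_1 \to \KK_2$ with image $\KK_2$, and $f_\Phi$ fixes every structure in $\KK_2$ pointwise (as structures). The key observation is that because the $\phi_R$ are quantifier-free, $f_\Phi$ commutes with taking substructures: if $A \subseteq B$ then $f_\Phi(A) \subseteq f_\Phi(B)$, and moreover $f_\Phi$ does not move the underlying set, so $f_\Phi$ induces a map $\binom{B}{A} \to \binom{f_\Phi(B)}{f_\Phi(A)}$ on copies. (One should check this is well-defined: an embedding $A \hookrightarrow B$ in $\KK_1$ is still an embedding $f_\Phi(A) \hookrightarrow f_\Phi(B)$ since the relations of the images are quantifier-free definable from those of the originals.)

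Now take $A, B \in \KK_2$ and $k \in \N$; we must find $C \in \KK_2$ with $C \to (B)^A_k$. Since $A, B \in \KK_2 \subseteq \KK_1$, the Ramsey property of $\KK_1$ gives $C_1 \in \KK_1$ with $C_1 \to (B)^A_k$. Put $C := f_\Phi(C_1) \in \KK_2$. Given a $k$-coloring $\chi$ of $\binom{C}{A}$, pull it back along $f_\Phi$ to a $k$-coloring $\chi'$ of $\binom{C_1}{A'}$, where $A' \in \binom{C_1}{A'}$ ranges over copies of $A$ in $C_1$; more precisely, since $f_\Phi$ fixes the underlying set, every copy $A' \hookrightarrow C_1$ maps to a copy $f_\Phi(A') \hookrightarrow C = f_\Phi(C_1)$ on the same point set, and $f_\Phi(A') \cong A$ because $A \in \KK_2$ is a fixed point of $f_\Phi$; set $\chi'(A') := \chi(f_\Phi(A'))$. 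By the choice of $C_1$ there is a copy $\widehat B \in \binom{C_1}{B}$ with $\binom{\widehat B}{A}$ $\chi'$-monochromatic. Then $f_\Phi(\widehat B) \in \binom{C}{B}$ (again $f_\Phi$ fixes the point set and $f_\Phi(\widehat B) \cong B$ since $B \in \KK_2$), and $\binom{f_\Phi(\widehat B)}{A}$ is $\chi$-monochromatic: any copy of $A$ inside $f_\Phi(\widehat B)$ is $f_\Phi(A'')$ for some copy $A''$ of $A$ inside $\widehat B$ (using that $f_\Phi$ is a bijection on the point set and maps substructures to substructures), and $\chi(f_\Phi(A'')) = \chi'(A'')$ is constant over $A'' \in \binom{\widehat B}{A}$.

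The main obstacle, and the only place real care is needed, is the bookkeeping around "copies on the same underlying set": one must verify that the correspondence $A' \mapsto f_\Phi(A')$ is a \emph{bijection} between $\binom{C_1}{A'}$ (copies of $A$-in-$\KK_1$-shape, which happen all to be $f_\Phi$-fixed images of $A$... — actually the subtlety is that $\binom{C_1}{A}$ as computed in $\KK_1$ need not coincide with $\binom{C_1}{A}$ as computed in $\KK_2$-shape) and $\binom{C}{A}$, so that the monochromatic set downstairs is genuinely \emph{all} of $\binom{f_\Phi(\widehat B)}{A}$ and not merely those copies arising from upstairs. This works because $f_\Phi$ is a retraction: $f_\Phi \circ f_\Phi = f_\Phi$, so $f_\Phi$ is the identity on its image $C = f_\Phi(C_1)$, hence a copy of $A$ inside $C$ is literally a substructure of $C_1$ on which $f_\Phi$ acts as the identity, and pulling it back up along the inclusion $C \hookrightarrow$ (point set of $C_1$) gives a substructure $A''$ of $C_1$ with $f_\Phi(A'') = A''$ viewed in $C$; so surjectivity holds, and injectivity is immediate from $f_\Phi$ not moving points. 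A secondary point to state explicitly is that $f_\Phi$ preserves the isomorphism type of members of $\KK_2$ and sends $\KK_1$-members to $\KK_2$-members with $\leq$ as many relations holding, which is what makes $f_\Phi(\widehat B) \cong B$; this is exactly the definition of $f_\Phi$ being a retraction onto $\KK_2$ with $\KK_2 \subseteq \KK_1$.
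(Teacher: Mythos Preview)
Your proof is correct and follows the same strategy as the paper: take a Ramsey witness $C_1 \in \KK_1$, set $C = f_\Phi(C_1)$, pull back the coloring via $f_\Phi$, and push down the monochromatic $\widehat B$. The paper dispenses with your entire ``main obstacle'' paragraph in one line, however: since $\widehat B \cong B \in \KK_2$ and $f_\Phi$ is the identity on structures in $\KK_2$, $f_\Phi$ is literally the identity on $\widehat B$, so $\binom{\widehat B}{A}$ computed in $C_1$ and $\binom{f_\Phi(\widehat B)}{A}$ computed in $C$ are the \emph{same} set of substructures --- no bijection bookkeeping between $\binom{C_1}{A}$ and $\binom{C}{A}$ is needed at all.
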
	
\begin{proof}
	Let $A, B \in \KK_2$. Then we also have $A, B \in \KK_1$, and so there is some $C_1 \in \KK_1$ witnessing the Ramsey property for $A, B \in \KK_1$. Let $C_2 \in \KK_2$ be the retract of $C_1$. We claim $C_2$ witnesses the Ramsey property for $A, B \in \KK_2$.
	
	Let $\chi_2$ be a coloring of $\binom{C_2}{A}$. Let $f_\Phi$ be the quantifier-free reinterpretation, as in Definition \ref{def:qfr}, and recall $f_\Phi$ restricts to the identity on copies of $A, B$. Define a coloring $\chi_1$ of $\binom{C_1}{A}$ by $\chi_1(A) = \chi_2(f_\Phi(A))$. Let $\widehat B \subset C_1$ be a monochromatic copy of $B$. Then $f_\Phi(\widehat B) \subset C_2$ is a monochromatic copy of $B$, since $f_\Phi$ is the identity on $\widehat B$. 
\end{proof}

The idea of the retraction we will use is that in any lifted structure, $<$ is determined by the $(\phi_{E, i})$, and certain restrictions of $<$. In a $\KK$-structure, we can take these restrictions of $<$, forget the rest of $<$, and then use the $(\phi_{E, i})$ to define a new order from the restrictions. This is carried out in detail in the next definition.
  
 In the following definition, given $x \in P_{E, i}$ and $F >E$, $x/F$ is the $y \in P_{F, 1}$ such that $y \geq_U x'$, where $B_{E, i, 1}(x, x')$. In order for formulas involving $x/F$ to be quantifier-free, we must consider the relations $B_{E, i, j}$ and $U_{E, E'}$ to be functions. These relations define functions in $\UU_\KK$-closed $\KK$-structures, which is the reason for restricting ourselves to $\UU_\KK$-closed structures below.

\begin{definition} \label{def:K'}
Let $F'_E$, $F''_E$ be as in Definition \ref{def:prelift}.

For each $E \in \Lambda$, we inductively define a formula $\psi_{E, i}$ which gives a linear order on $P_{E, i}$. The case $E = \bbone$ is trivial. Now assume we have defined such $\psi_{F, i}$ for all $F > E$. 

If $E$ is meet-irreducible, let $\psi_{E, i}(x, y) \Leftrightarrow (\delta(x, y) = F' \meet x < y) \join (\delta(x, y) > F'_E \meet \psi_{F'_E, 1}(x/F'_E, y/F'_E))$.

If $E$ is meet-reducible, let $\psi_{E, i}(x, y) \Leftrightarrow (x \neq y) \meet (((\delta(x, y) = F'_E \meet \psi_{F''_E, 1}(x/F''_E, y/F''_E)) \join (\delta(x, y) \neq F'_E \meet \psi_{F'_E, 1}(x/F'_E, y/F'_E)))$. 

Let $\Phi_< = \set{\phi_<}$, where $\phi_<$ is
$$
(\bigvee_{\substack{E \in \Lambda \\ i \in [N_E]}} P_{E, i}(x) \wedge P_{E, i}(y) \wedge \psi_{E, i}(x, y)) \vee (\bigvee_{\substack{E, F \in \Lambda \\ i \in [N_E], j \in [N_F] \\ (E, i) <_{1-types} (F, j)}} P_{E, i}(x) \wedge P_{F, j}(y))
$$

Note that $f_\Phi$ restricts to the identity on structures for which $<$ is suitably encoded by the $(\psi_{F, i})$ and certain restrictions of $<$ on $P_{E, i}$ for meet-irreducible $E$. We thus let $\KK'$ be the quantifier-free retract of $\UU_\KK$-closed $\KK$-structures under the above quantifier-free reinterpretation scheme.
\end{definition}

\begin{remark}
Any $\KK'$-structure is $\UU_\KK$-closed.
\end{remark}

\begin{proposition}
Let $\vec A \in \vec \AA_\Lambda$. Then $\Lift(\vec A) \in \KK'$.
\end{proposition}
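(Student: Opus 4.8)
The plan is to use the description of $\KK'$ as the set of fixed points of the quantifier-free retraction $f_{\Phi_<}$ on the $\UU_\KK$-closed $\KK$-structures: a retraction equals the identity on its image, so by Definition \ref{def:qfr} the $\KK'$-structures are precisely the $\UU_\KK$-closed $\KK$-structures $K$ with $f_{\Phi_<}(K)=K$. By the observation recorded just after Definition \ref{def:lift}, $\Lift(\vec A)$ is a $\UU_\KK$-closed $\KK$-structure, so it only remains to verify that $f_{\Phi_<}$ fixes it; and since the scheme $\Phi_<$ reinterprets every relation of $\LL$ other than $<$ by itself, this amounts to checking that the interpretation of $\phi_<$ in $\Lift(\vec A)$ is exactly $<^{\Lift(\vec A)}$.

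The formula $\phi_<$ is a disjunction of two blocks. For distinct $x,y$ lying in different $1$-types the second block orders them according to $<_{1\text{-types}}$, which is exactly how $<^{\Lift(\vec A)}$ orders them by step $(5)$ of Definition \ref{def:prelift} (this persists in the algebraic closure, since the new points of $\Lift(\vec A)$ also lie in $1$-types and inherit their order from $L_\KK(\vec\Gamma)$); for $x=y$ both sides fail. Hence everything reduces to the following: for each $(E,i)$ and each pair of distinct $x,y\in P_{E,i}$, $\psi_{E,i}(x,y)$ holds in $\Lift(\vec A)$ iff $\phi_{E,i}(x,y)$ holds. Here $<^{\Lift(\vec A)}$ restricted to $P_{E,i}$ is the order induced by $\phi_{E,i}$, by step $(4)$ of Definition \ref{def:prelift} (again inherited from $L_\KK(\vec\Gamma)$, where $<$ on $P_{E,i}$ is everywhere this induced order), so the ``$<$'' occurring inside $\psi_{E,i}$ on a copy $P_{F,j}$ is, concretely, $\phi_{F,j}$.

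I would then establish $\psi_{E,i}\equiv\phi_{E,i}$ on $P_{E,i}$ by downward induction on $E$ in $\Lambda$, the case $E=\bbone$ being trivial. For the inductive step I would unwind $\phi_{E,i}$ according to Definition \ref{def:prelift}(d): for $E$ meet-irreducible it is the subquotient order produced by Corollary \ref{corollary:extendsqo}, and for $E$ meet-reducible the one produced by the construction of Lemma \ref{lemma:sqoproduct}, in each case relative to the fixed covers $F'_E$ (and $F''_E$) and, by Remark \ref{rem:choice}, with all auxiliary subquotient orders taken first in the enumeration --- so that the auxiliary orders appearing are literally $\phi_{F'_E,1}$ (and, in the meet-reducible case, the order that Lemma \ref{lemma:movesqo} builds from $\phi_{F''_E,1}$). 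The composition and restriction operations on subquotient orders are precisely the operations reflected in the clauses of $\psi_{E,i}$ in Definition \ref{def:K'}, and the matching uses: (i) the dictionary ``$\delta(x,y)=F'_E$'' $\Leftrightarrow$ ``$x,y$ lie in a common $F'_E$-class'' and ``$\delta(x,y)\neq F'_E$'' ($\delta(x,y)>F'_E$) $\Leftrightarrow$ ``$x,y$ lie in distinct $F'_E$-classes'' for distinct $E$-classes $x,y$, which holds because $F'_E$ covers $E$ and, by the proposition guaranteeing $\delta$ is well-defined, $\{G: x/G=y/G\}$ is an up-set closed under meets; (ii) in the meet-reducible case, the fact that two distinct $E$-classes in a common $F'_E$-class lie in distinct $F''_E$-classes but a common $(F'_E\vee F''_E)$-class (because $E=F'_E\wedge F''_E$), which is what Lemma \ref{lemma:movesqo} turns into a quotient comparison; and (iii) the induction hypotheses $\phi_{F'_E,1}\equiv\psi_{F'_E,1}$ and $\phi_{F''_E,1}\equiv\psi_{F''_E,1}$ applied to those quotient comparisons. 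Term by term, $\psi_{E,i}$ then recomputes $\phi_{E,i}$.

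The main obstacle is exactly this last verification. It is routine in character but requires care, since $\phi_{E,i}$ is built by the recursions embedded in the proofs of Lemma \ref{lemma:sqoproduct} and Corollary \ref{corollary:extendsqo} --- themselves iterated compositions and restrictions of subquotient orders along the chosen chain of covers --- while $\psi_{E,i}$ is built by the parallel recursion of Definition \ref{def:K'}; one must keep the choices of Remark \ref{rem:choice} straight so that the subquotient orders occurring in the two recursions genuinely coincide, and must correctly translate between values of $\delta$ and membership in the ambient equivalence relations. Once this is done, the equivalence $\phi_{E,i}\equiv\psi_{E,i}$ --- hence $f_{\Phi_<}(\Lift(\vec A))=\Lift(\vec A)$ and $\Lift(\vec A)\in\KK'$ --- follows.
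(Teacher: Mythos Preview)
Your proposal is correct and follows the same approach as the paper: verify that $\Lift(\vec A)$ is a $\UU_\KK$-closed $\KK$-structure and then that $f_{\Phi_<}$ fixes it. The paper's own proof is the single line ``$\Phi_<$ was defined so as to make this the case,'' whereas you supply the intended unpacking --- reducing to $\psi_{E,i}\equiv\phi_{E,i}$ on $P_{E,i}$ and arguing by downward induction on $E$ using Remark~\ref{rem:choice} to align the auxiliary choices --- which is exactly the verification the paper leaves implicit.
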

\begin{proof}
It is clear that $\Lift(\vec A)$ is a $\UU_\KK$-closed $\KK$-structure. We must check that $f_{\Phi_<}$ is the identity on $\Lift(\vec A)$. But $\Phi_<$ was defined so as to make this the case. 
\end{proof}

\begin{proposition} \label{proposition:KtoK'}
Suppose the class of $\UU_{\KK}$-closed $\KK$-structures is a Ramsey class. Then $\KK'$ is a Ramsey class.
\end{proposition}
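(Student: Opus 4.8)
The plan is to read this off Lemma~\ref{lemma:forget}, taking $\KK_1$ to be the class of $\UU_\KK$-closed $\KK$-structures (Ramsey by hypothesis) and $\KK_2 = \KK'$. By Definition~\ref{def:K'}, $\KK'$ is by construction the image of the quantifier-free reinterpretation $f_{\Phi_<}$ restricted to the $\UU_\KK$-closed $\KK$-structures, so the only thing that genuinely needs to be checked is that $f_{\Phi_<}$ really is a quantifier-free retraction on that class, i.e. that it maps the class into itself and is idempotent. This would simultaneously give that $\KK'$ is a quantifier-free retract of $\KK_1$ and that $\KK' \subseteq \KK_1$, which are exactly the hypotheses of Lemma~\ref{lemma:forget}.

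First I would verify that $f_{\Phi_<}$ sends $\UU_\KK$-closed $\KK$-structures to $\UU_\KK$-closed $\KK$-structures. Since $\Phi_<$ alters only the interpretation of $<$, conditions $(1)$, $(2)$, $(4)$--$(7)$ of Definition~\ref{def:K} are untouched (none mentions $<$, and $(2)$ concerns only the $\LL_0$-reduct), and for $\UU_\KK$-closedness one need only reconsider the closure relation $D_{E,E'}$, whose root refers to $<$. The key observation is that for $E \neq E'$ the root condition constrains $<$ only between the distinct $1$-types $(E,1)$ and $(E',1)$, where $<$ must in any case agree with $<_{1-types}$; since the second disjunct of $\phi_<$ forces the reinterpreted order to agree with $<_{1-types}$ between distinct $1$-types, the family of tuples embedding the root of $D_{E,E'}$ is unchanged, and $\UU_\KK$-closedness persists. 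It then remains to see that the reinterpreted $<$ is again a linear order agreeing with $<_{1-types}$ between $1$-types (condition $(3)$): agreement is built into $\phi_<$, and, because distinct $1$-types are laid end-to-end, linearity reduces to the statement that each $\psi_{E,i}$ defines a linear order on $P_{E,i}$ in every $\UU_\KK$-closed $\KK$-structure, which one confirms by the downward induction of Definition~\ref{def:K'} (each step presents $\psi_{E,i}$ as a lexicographic combination of a linear order on a coarser quotient with either the given $<$, restricted to the pairs at distance exactly $F'_E$, or a finer $\psi_{F,1}$).

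Next I would check idempotency of $f_{\Phi_<}$ on the $\UU_\KK$-closed $\KK$-structures. Writing $<'$ for the reinterpreted order, I need $\phi_<$ evaluated against $<'$ to return $<'$. Between distinct $1$-types this is immediate, as there $<'$ equals $<_{1-types}$ and $\phi_<$ reconstructs $<_{1-types}$. Within a single $P_{E,i}$, the only free occurrence of the structure symbol $<$ inside $\psi_{E,i}$ is, for meet-irreducible $E$, the conjunct $\delta(x,y) = F'_E \wedge x < y$; but for $x,y \in P_{E,i}$ with $\delta(x,y) = F'_E$ the whole of $\psi_{E,i}$ collapses to $x < y$, so $<'$ coincides with $<$ on exactly those pairs, and re-evaluating that conjunct against $<'$ returns the same relation. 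A downward induction on $\Lambda$ — using coherence of $\delta$ with the maps $x \mapsto x/F$ to handle the recursive calls such as $\psi_{F'_E,1}(x/F'_E, y/F'_E)$ — then shows $\psi_{E,i}$, and hence $\phi_<$, is unchanged, so $f_{\Phi_<} \circ f_{\Phi_<} = f_{\Phi_<}$.

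With these two facts in hand, Lemma~\ref{lemma:forget} would apply verbatim to yield that $\KK'$ is a Ramsey class. The main point of care will be the bookkeeping in the idempotency step — checking that every occurrence of the symbol $<$ reachable, through possibly several levels of the recursion, inside some $\psi_{E,i}$ is of the ``$\delta = F'_E$'' type on which $<$ and $<'$ provably coincide — but this is forced by the shape of the formulas in Definition~\ref{def:K'} and raises no real difficulty.
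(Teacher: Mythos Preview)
Your proposal is correct and follows exactly the paper's approach: the paper's proof is the single line ``This follows by Lemma~\ref{lemma:forget},'' relying on the fact that Definition~\ref{def:K'} already declares $\KK'$ to be the quantifier-free retract of the $\UU_\KK$-closed $\KK$-structures (and the subsequent remark that $\KK'$-structures are $\UU_\KK$-closed). What you have done is supply the verification---that $f_{\Phi_<}$ preserves the class and is idempotent---which the paper leaves implicit in that definition; your checks are sound and nothing more is needed.
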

\begin{proof}
This follows by Lemma \ref{lemma:forget}.
\end{proof}

\begin{proposition} \label{proposition:representation}
Let $K \in \KK'$. Then there is an $\vec A_K \in \vec \AA_\Lambda$ such that $K$ embeds into $\Lift(\vec A_K)$.
\end{proposition}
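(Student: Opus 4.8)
The plan is to reverse-engineer the $\Lift$ operation: starting from $K \in \KK'$, I would first extract a $\Lambda$-ultrametric space from the metric part of $K$, then equip it with subquotient orders read off from the linear order $<$ restricted to the relevant $1$-types, producing a candidate $\vec A_K \in \vec \AA_\Lambda$, and finally verify that $K$ embeds into $\Lift(\vec A_K)$. For the first step, recall that the metric part $K_0$ of $K$ is a $\UU_\KK$-closed (hence upward-closed) $\KK_0$-structure, so by Definition \ref{def:AK} we obtain $A_{K_0} \in \AA_\Lambda$, and by Proposition \ref{proposition:eqsub} $K_0$ embeds into $(A_{K_0})^{eq}$. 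The underlying space of $\vec A_K$ will be (an extension of) $A_{K_0}$; more precisely, since the $\Lift$ construction only produces finitely many points but we need genericity, I would embed $A_{K_0}$ into $\vec\Gamma$ and take $\vec A_K$ to be a sufficiently large finite substructure of $\vec\Gamma$ containing $A_{K_0}$ and closed enough that the relevant subquotient orders are realized — alternatively, argue directly that $A_{K_0}$ with suitable subquotient orders already lies in $\vec\AA_\Lambda$.

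The second step is to define the subquotient orders on $\vec A_K$. For each meet-irreducible $E \in \Lambda$ and each $i \in [N_E]$, the points of $P_{E, i}$ in $K$ carry a linear order (the restriction of $<$), and since $K \in \KK'$ this restriction, together with the $(\psi_{F, j})$, recovers all of $<$; I would use the restriction of $<$ on $P_{E, i}$ to define $<_{E, i}$ on $\vec A_K$ as follows — via the bijections $B_{E, i, 1}$ and the representation $K_0 \hookrightarrow (A_{K_0})^{eq}$, the points of $P_{E, i}$ correspond to $E$-classes of $A_{K_0}$, and $<$ restricted there is precisely a subquotient order with bottom relation $E$ whose top relation is determined by $\delta$ (one checks it is $E$-convex in the appropriate sense and comparable exactly on the right classes, using the $\KK$-axioms). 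This gives a well-equipped family of subquotient orders on $A_{K_0}$, which we then extend (using amalgamation of $\vec\AA_\Lambda$, i.e. Theorem \ref{theorem:amalg}, or genericity of $\vec\Gamma$) to a full $\vec A_K \in \vec\AA_\Lambda$ containing this data.

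The third step is the verification that $K \hookrightarrow \Lift(\vec A_K)$. Here one unwinds the definition of $\Lift$ (Definition \ref{def:lift}): $\Lift(\vec A_K)$ is the algebraic closure of $L_\KK(\vec A_K)$ inside $L_\KK(\vec\Gamma)$. Since $\vec A_K \supseteq A_{K_0}$, we get $L_\KK(A_{K_0}) \subseteq L_\KK(\vec A_K)$, and one checks that the $P$-, $B$-, $U$-, $D$-, $D^\exists$-structure of $K$ matches that of $L_\KK(\vec A_K)$ on the corresponding points (this is essentially Proposition \ref{proposition:eqsub} together with the definitions of $B_{E,i,j}$ and the $D$-relations in Definition \ref{def:prelift}), while the order $<$ matches because we chose the subquotient orders on $\vec A_K$ to induce, via the $(\phi_{E,i})$ and hence via the $(\psi_{E,i})$, exactly the order $<$ on $K$ — and this is where $K \in \KK'$ is essential, since it guarantees $<$ on $K$ is already the reinterpreted order. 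Passing to algebraic closure only adds points (the intersections of classes forced by downward closure), so $K$, being a $\UU_\KK$-closed substructure sitting inside this closure, embeds as required.

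The main obstacle I expect is bookkeeping around the top relations of the subquotient orders and the passage from the finite $K_0$ to a genuinely generic $\vec A_K$: one must be careful that the linear order $<$ restricted to $P_{E,i}$ genuinely arises as (the $A^{eq}$-trace of) some subquotient order definable the way Definition \ref{def:prelift} requires — in particular that its top relation is consistent with the choices $F'_E$, $F''_E$ and with $<_{1-types}$ — rather than an arbitrary linear order on those classes. Handling the meet-reducible $E$ (where $N_E = 1$ and the order comes from the composition/Lemma \ref{lemma:movesqo} machinery rather than from a primitive subquotient order in the language) will require the most care, and is where I would spend the bulk of the argument.
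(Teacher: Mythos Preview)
Your approach is essentially the same as the paper's, though the paper's argument is considerably more streamlined. The paper works directly on $A_{K_0}$ with no enlargement into $\vec\Gamma$: for each subquotient order $<_{E,i}$ in the language (so $E$ meet-irreducible), with prescribed top relation $E'$, it defines a partial order $<^*_{E,i}$ on the $E$-classes $x_A/E$ by pulling back $<$ along the bijections $B_{E,1,i}$, restricted to pairs with $\delta(x,y)\le E'$, and then extends arbitrarily to a subquotient order on all of $A_{K_0}$. The embedding into $\Lift(\vec A_K)$ is then asserted rather than verified in detail.

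Two minor corrections to your plan. First, your worry about the meet-reducible case is largely a red herring: by the well-equipped condition, no subquotient order in the language of $\vec\AA_\Lambda$ has meet-reducible bottom relation, so there is nothing to define on $\vec A_K$ for such $E$; the order on $P_{E,1}$ for meet-reducible $E$ is recovered automatically from the $\psi_{E,1}$ recursion, and matches $<$ on $K$ precisely because $K\in\KK'$ (this is where that hypothesis enters, as you correctly note). Second, the top relation of $<_{E,i}$ is not ``determined by $\delta$'' but is simply the one prescribed by the language; you must restrict $<$ to pairs with $\delta\le E'$ before reading off the partial subquotient order, and then extend---the full linear order on $P_{E,i}$ is not itself a subquotient order with the correct top relation.
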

\begin{proof}
	Let $K_0$ be the metric part of $K$. Taking $A_{K_0}$ as in Definition \ref{def:AK} gives a structure in $\AA_\Lambda$, which needs to be expanded by certain subquotient orders $<_{E, i}$ in order to obtain a structure in $\vec \AA_\Lambda$. 
	
	The $<_{E, i}$ are determined in the following manner: we know that $<_{E, i}$ should have bottom relation $E$, and let $E'$ be its prescribed top-relation. Recall that each $x \in K_0$ gives a point $x_A \in A_{K_0}$. Let $<^*_{E, i}$ be the partial order on points of $A_{K_0}/E$ of the form $x_A/E$ defined as follows. For $x, y \in K_0$,  let $x_A/E <^*_{E, i} y_A/E$ if there are points $x', y' \in P_{E, i}$ such that $B_{E, 1, i}(x, x')$ and $B_{E, 1, i}(y, y')$, we have $\delta(x, y) \leq E'$ and $x' < y'$. 
	
	Then let $<_{E, i}$ be an arbitrary extension of $<^*_{E, i}$ to a subquotient order of $A_{K_0}$ with bottom relation $E$ and top relation $E'$. Then the resulting structure is the desired $\vec A_K$. 
\end{proof}

Thus the class $\KK'$ is exactly the closure under $\UU_\KK$-closed substructure of the class obtained by applying the $\Lift$ operation to $\vec \AA_\Lambda$.
	
\begin{lemma} \label{lemma:K'toA}
Suppose $\KK'$ is a Ramsey class. Then $\vec{\AA_\Lambda}$ is a Ramsey class.
\end{lemma}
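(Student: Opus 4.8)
The plan is to show that the Ramsey property for $\KK'$ transfers down to $\vec\AA_\Lambda$ by exhibiting $\vec\AA_\Lambda$ as essentially a ``reduct'' of $\KK'$ in a way that respects the copy-structure needed for Ramsey colorings. The key facts already in hand are: the $\Lift$ operation is functorial on embeddings (Proposition \ref{prop:liftfunc}), every $\vec A\in\vec\AA_\Lambda$ has $\Lift(\vec A)\in\KK'$, and conversely every $K\in\KK'$ embeds into some $\Lift(\vec A_K)$ (Proposition \ref{proposition:representation}). The remaining ingredient is that the passage $\vec A\mapsto \Lift(\vec A)$ is, up to the closure operation, injective on copies: a copy of $\Lift(\vec A)$ inside $\Lift(\vec B)$ must arise from a copy of $\vec A$ inside $\vec B$. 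This is because $\vec A$ is interpretable in $\Lift(\vec A)$ by a fixed quantifier-free scheme (the metric part sits inside via the $P_{\bbzero,1}$-predicate together with the restrictions of $<$ that recover the subquotient orders $<_{E,i}$, exactly as recorded in Proposition \ref{proposition:representation}), so an embedding $\Lift(\vec A)\hookrightarrow\Lift(\vec B)$ restricts to an embedding $\vec A\hookrightarrow \vec B$, and by Proposition \ref{prop:liftfunc} this restriction determines the original embedding of the lifts up to the automorphisms fixing $\vec A$.

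Concretely, I would argue as follows. Fix $\vec A,\vec B\in\vec\AA_\Lambda$ and a number of colors $r$. Set $A^*=\Lift(\vec A)$ and $B^*=\Lift(\vec B)$, both in $\KK'$. Apply the Ramsey property of $\KK'$ to get $C^*\in\KK'$ with $C^*\to(B^*)^{A^*}_r$. By Proposition \ref{proposition:representation} there is $\vec C\in\vec\AA_\Lambda$ with $C^*\hookrightarrow \Lift(\vec C)=:C^{**}$; since $C^{**}$ is again a $\KK'$-structure and $C^*\hookrightarrow C^{**}$, the partition relation $C^{**}\to(B^*)^{A^*}_r$ still holds (enlarging the ambient structure preserves it). I claim $\vec C$ witnesses $\vec C\to(\vec B)^{\vec A}_r$. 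Given a coloring $\chi$ of $\binom{\vec C}{\vec A}$, define a coloring $\chi^*$ of $\binom{C^{**}}{A^*}$: for a copy $\iota\colon A^*\hookrightarrow C^{**}$, restrict it to the interpreted copy of $\vec A$ inside $A^*$ to get a copy $\bar\iota$ of $\vec A$ inside $\vec C$ (using that the interpreting scheme is quantifier-free, so it commutes with the embedding, and that $C^{**}=\Lift(\vec C)$ has $\vec C$ as its interpreted metric-plus-orders part), and set $\chi^*(\iota)=\chi(\bar\iota)$. By the choice of $C^{**}$ there is a monochromatic $\widehat{B^*}\in\binom{C^{**}}{B^*}$. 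Restricting $\widehat{B^*}$ to the interpreted part yields a copy $\widehat{\vec B}$ of $\vec B$ in $\vec C$; one checks $\binom{\widehat{\vec B}}{\vec A}$ is monochromatic under $\chi$, because every copy of $\vec A$ in $\widehat{\vec B}$ extends (by functoriality of $\Lift$, Proposition \ref{prop:liftfunc}) to a copy of $A^*$ in $\widehat{B^*}$ lying over it, and those are all one $\chi^*$-color.

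The step I expect to be the main obstacle, and the one requiring the most care, is the claim that a copy of $A^*=\Lift(\vec A)$ inside $\Lift(\vec B)$ is the same data as a copy of $\vec A$ inside $\vec B$ together with a canonical extension to lifts — i.e.\ that $\Lift$ induces a bijection between $\binom{\vec B}{\vec A}$ and $\binom{\Lift(\vec B)}{\Lift(\vec A)}$. Injectivity is immediate from the interpretation of $\vec A$ in $A^*$. Surjectivity/well-definedness is the content of Proposition \ref{prop:liftfunc} (every embedding $\vec A\hookrightarrow\vec B$ extends to $\Lift(\vec A)\hookrightarrow\Lift(\vec B)$), but one must also verify that an abstract embedding $A^*\hookrightarrow B^*$ does not do anything beyond what such an extension would do: its restriction to the $P_{\bbzero,1}$-part together with the $<$-restrictions recovering the $<_{E,i}$ lands inside the interpreted $\vec B$, and then the algebraic-closure/$\UU_\KK$-closedness of both sides forces the rest of the embedding to coincide with the canonical extension. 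I would make this precise by noting that in a $\UU_\KK$-closed structure every element is in the algebraic closure (indeed the image under the closure relations $U,B,D$) of the $P_{\bbzero,1}$-part, so an embedding is determined by its restriction to that part, which in turn is determined as a copy of the metric part of $\vec A$ together with the subquotient-order data, i.e.\ as a copy of $\vec A$. Once this dictionary between copies is established, the color-pullback argument above goes through routinely.

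\begin{proof}
Fix $\vec A, \vec B \in \vec \AA_\Lambda$ and $r \in \N$. Write $A^* = \Lift(\vec A)$ and $B^* = \Lift(\vec B)$; by the proposition preceding this lemma, $A^*, B^* \in \KK'$.

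We first record the key correspondence between copies. Recall from Proposition \ref{proposition:representation} (and the definition of $L_\KK$) that $\vec A$ is recovered from $A^*$ by a fixed quantifier-free scheme: the universe of (the metric part of) $\vec A$ is the set of points satisfying $P_{\bbzero, 1}$, and each subquotient order $<_{E, i}$ of $\vec A$ is an appropriate restriction of $<$ in $A^*$ pulled back along the bijections $B_{E, 1, i}$ and the functions $U_{\bbzero, E}$, all of which are quantifier-free definable in $\UU_\KK$-closed structures. Call this interpreted copy $\vec A \subseteq A^*$. Since in a $\UU_\KK$-closed $\KK$-structure every point lies in the image of the closure relations $U$, $B$, $D$ applied to the $P_{\bbzero, 1}$-part, any embedding $A^* \hookrightarrow B^*$ is determined by its restriction to $\vec A \subseteq A^*$; and because the interpreting scheme is quantifier-free, that restriction is an embedding of $\vec A$ into the interpreted copy $\vec B \subseteq B^*$, i.e.\ into $\vec B$. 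Conversely, by Proposition \ref{prop:liftfunc}, any embedding $\vec A \hookrightarrow \vec B$ extends to an embedding $\Lift(\vec A) \hookrightarrow \Lift(\vec B)$, and this extension is unique since it is determined on the $P_{\bbzero, 1}$-part. Hence $\iota \mapsto \iota\!\upharpoonright_{\vec A}$ is a bijection $\binom{B^*}{A^*} \to \binom{\vec B}{\vec A}$, and similarly with any $\vec C \in \vec \AA_\Lambda$ in place of $\vec B$.

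By hypothesis $\KK'$ is a Ramsey class, so there is $C^* \in \KK'$ with $C^* \to (B^*)^{A^*}_r$. By Proposition \ref{proposition:representation} there is $\vec C \in \vec \AA_\Lambda$ with $C^* \hookrightarrow \Lift(\vec C)$; put $C^{**} = \Lift(\vec C)$, which is again in $\KK'$. Enlarging the ambient structure preserves the partition relation, so $C^{**} \to (B^*)^{A^*}_r$. We claim $\vec C \to (\vec B)^{\vec A}_r$.

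Let $\chi$ be an $r$-coloring of $\binom{\vec C}{\vec A}$. Using the bijection $\binom{C^{**}}{A^*} \to \binom{\vec C}{\vec A}$ above, define $\chi^*$ on $\binom{C^{**}}{A^*}$ by $\chi^*(\iota) = \chi(\iota\!\upharpoonright_{\vec A})$. By choice of $C^{**}$ there is a $\chi^*$-monochromatic $\widehat{B^*} \in \binom{C^{**}}{B^*}$. Let $\widehat{\vec B} = \widehat{B^*}\!\upharpoonright_{\vec B} \in \binom{\vec C}{\vec A}$'s ambient, i.e.\ $\widehat{\vec B} \in \binom{\vec C}{\vec B}$. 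We check $\binom{\widehat{\vec B}}{\vec A}$ is $\chi$-monochromatic. Let $\bar\jmath \colon \vec A \hookrightarrow \widehat{\vec B}$ be any copy. By Proposition \ref{prop:liftfunc} it extends (uniquely) to $j \colon A^* \hookrightarrow \widehat{B^*}$, and by construction $j\!\upharpoonright_{\vec A} = \bar\jmath$ (viewing $\widehat{\vec B}$ as the interpreted copy inside $\widehat{B^*}$, which agrees with its interpreted copy inside $C^{**}$ since the interpreting scheme is quantifier-free). Hence $\chi(\bar\jmath) = \chi^*(j)$, and this is the single color of $\widehat{B^*}$, independent of $\bar\jmath$. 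Thus $\widehat{\vec B}$ is monochromatic for $\chi$, proving $\vec C \to (\vec B)^{\vec A}_r$, and so $\vec \AA_\Lambda$ is a Ramsey class.
\end{proof}
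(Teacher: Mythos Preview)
Your overall strategy matches the paper's, but there is a genuine gap in how you identify $\vec A$ inside $A^* = \Lift(\vec A)$. You assert that ``the universe of (the metric part of) $\vec A$ is the set of points satisfying $P_{\bbzero, 1}$'' in $A^*$, and build your bijection $\binom{B^*}{A^*} \cong \binom{\vec B}{\vec A}$ on this. But $\Lift(\vec A)$ is by definition the \emph{algebraic closure} of $L_\KK(\vec A)$ in $L_\KK(\vec\Gamma)$, and this closure can add new elements to $P_{\bbzero,1}$. Concretely, take $\Lambda$ to be the four-element Boolean lattice $\{\bbzero, E, F, \bbone\}$ and $\vec A$ to have underlying space $\{a,b\}$ with $d(a,b)=\bbone$. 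In $\Gamma$ the intersection $(a/E)\cap(b/F)$ is a single $(E\meet F)=\bbzero$-class, hence a single point $z$; this $z$ is algebraic over $\{a/E, b/F\} \subset L_\KK(\vec A)$, so $z \in P_{\bbzero,1}$ lies in $\Lift(\vec A)$ but $z \notin \{a,b\}$. Thus the $P_{\bbzero,1}$-part of $\Lift(\vec A)$ has (at least) three points while $\vec A$ has two, and there is no quantifier-free formula that picks out $\vec A$ inside it. Your claimed interpretation therefore does not recover $\vec A$, and the restriction map you define does not land in $\binom{\vec B}{\vec A}$ as written. (Relatedly, your general assertion that in any $\UU_\KK$-closed $\KK$-structure every point is reached by closure relations from the $P_{\bbzero,1}$-part is also false: a single point satisfying $P_{\bbone,1}$ already gives a counterexample.)

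The paper confronts exactly this difficulty, and does so not by a quantifier-free interpretation but by rigidity. Since any $\widehat X \cong \Lift(\vec X)$ carries a linear order, it is rigid, so there is a \emph{unique} isomorphism $\Lift(\vec X)\to \widehat X$; its image of $L_\KK(\vec X)$ is a canonical substructure $ker(\widehat X)$, and the $P_{\bbzero,1}$-part of $ker(\widehat X)$ (not of $\widehat X$ itself) \emph{is} exactly $\vec X$. The coloring transfer is then set up via $ker$ and the map $L^{opp}$. Once this extra step is inserted in place of your quantifier-free interpretation, the rest of your color-pullback argument goes through just as you describe.
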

\begin{proof}
Let $\vec A, \vec B \in \vec\AA_\Lambda$. Then $\Lift(\vec A), \Lift(\vec B)$ are $\KK'$-structures, and so there is some $C \in \KK'$ witnessing the Ramsey property for $\Lift(\vec A), \Lift(\vec B)$. By possible enlarging $C$, we may assume it has the form $\Lift(\vec C)$ for some $\vec C \in \vec \AA_\Lambda$. We will show that $\vec C$ witnesses the Ramsey property for $\vec A, \vec B$.

Let $\chi$ be a coloring of $\binom{\vec C}{\vec A}$. We wish to lift $\chi$ to a coloring $\widehat \chi$ of $\binom{C}{\Lift(\vec A)}$. 

\begin{claim}
Let $\vec X \in \vec A_\Lambda$, $\widehat X \in \KK'$, and $\widehat X \cong \Lift(\vec X)$. Then there is a unique substructure $\vec X_1$ of $\widehat X$ such that $(\vec X_1, \widehat X) \cong (L_\KK(\vec X), \Lift(\vec X))$.
\end{claim}
\begin{claimproof}
Since $\widehat X$ is equipped with a linear order, it is rigid, and hence there is a unique isomorphism of $\Lift(\vec X)$ with $\widehat X$. The claim follows.
\end{claimproof}

For any $\widehat X \cong \Lift(\vec X)$, we define $ker(\widehat X)$ to be the unique substructure such that $(ker(\widehat X), \widehat X) \cong (L_\KK(\vec X), \Lift(\vec X))$. 

Also, given a structure $X$ of the form $L_\KK(\vec X)$ we define a map $L_X^{opp}$ from $P_{\bbzero, 1} \subset X$  to $\vec A_X$ as defined in Proposition \ref{proposition:representation}, which sends $x$ to the corresponding point $x_A$. Note that if $\vec X \subset \vec Y$, and thus $L_\KK(\vec X) \subset L_\KK(\vec Y)$, then $L_{L_\KK(\vec Y)}^{opp}[L_\KK(\vec X)] \cong \vec X$. Furthermore, if we identify $L_{L_\KK(\vec Y)}^{opp}[L_\KK(\vec Y)]$ with $\vec Y$, then $L_{L_\KK(\vec Y)}^{opp}[L_\KK(\vec X)] = \vec X$.

\begin{claim}
There is a coloring $\widehat \chi$ of $\binom{C}{\Lift(\vec A)}$ such that, for $\widehat A \in \binom{C}{\Lift(\vec A)}$
$$\widehat \chi(\widehat A) = \chi(L_{ker(C)}^{opp}[ker(\widehat A)])$$
\end{claim}
\begin{claimproof}
Because $\widehat A \subset C$, $ker(\widehat A) \subset ker(C)$. Then, since $\widehat A \cong \Lift(\vec A)$, we have $L_{ker(C)}^{opp}[ker(\widehat A)] \in \binom{\vec C}{\vec A}$.
\end{claimproof}

By the Ramsey property for $C$, there is $\widehat B \cong \Lift(\vec B)$ in $C$ which is $\widehat \chi$-monochromatic. We now check $L_{ker(C)}^{opp}[ker(\widehat B)]$ is $\chi$-monochromatic.

If $\vec A_1 \subset \vec B$ with $\vec A_1 \cong \vec A$, then $L_\KK(\vec A_1) \subset L_\KK(\vec B)$, and by Proposition \ref{prop:liftfunc} this extends canonically to an embedding of $\Lift(\vec A_1)$ into $\widehat B$. Thus $\chi(\vec A_1) = \widehat \chi(\widehat A_1)$, with $\widehat A_1$ the image of $\Lift(\vec A_1)$ in $\widehat B$. Thus $\vec B$ is $\chi$-monochromatic.
\end{proof}

Thus, we have the following.
\begin{corollary} \label{lemma:KtoA}
Suppose the class of $\UU_{\KK}$-closed $\KK$-structures is a Ramsey class. Then $\vec{\AA}_\Lambda$ is a Ramsey class.
\end{corollary}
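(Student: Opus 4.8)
The plan is to establish the Ramsey property for the class of $\UU_\KK$-closed $\KK$-structures by applying Theorem \ref{theorem:multiamalg}, and then invoke Corollary \ref{lemma:KtoA} — but since Corollary \ref{lemma:KtoA} is exactly the statement to be proven, the real content is the chain: prove the class of $\UU_\KK$-closed $\KK$-structures is Ramsey, feed this into Proposition \ref{proposition:KtoK'} to get that $\KK'$ is Ramsey, then feed that into Lemma \ref{lemma:K'toA} to conclude $\vec\AA_\Lambda$ is Ramsey. So the proof of the corollary itself is one line: chain Proposition \ref{proposition:KtoK'} and Lemma \ref{lemma:K'toA}. The substance is that the hypothesis (class of $\UU_\KK$-closed $\KK$-structures is Ramsey) will be discharged in a later section, so here I only need to assemble the implications already proven.

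First I would observe that all the needed implications are in hand. By Proposition \ref{proposition:KtoK'}, if the class of $\UU_\KK$-closed $\KK$-structures is a Ramsey class, then $\KK'$ is a Ramsey class. By Lemma \ref{lemma:K'toA}, if $\KK'$ is a Ramsey class, then $\vec\AA_\Lambda$ is a Ramsey class. Composing these two implications gives exactly the corollary. So the proof is simply: assume the class of $\UU_\KK$-closed $\KK$-structures is Ramsey; by Proposition \ref{proposition:KtoK'}, $\KK'$ is Ramsey; by Lemma \ref{lemma:K'toA}, $\vec\AA_\Lambda$ is Ramsey.

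There is essentially no obstacle at the level of this corollary — it is a formal consequence of the two preceding results. The genuine difficulty (to be addressed in the sequel) lies in verifying that the class of $\UU_\KK$-closed $\KK$-structures is a Ramsey class, which requires checking the four conditions of an $(\RR, \UU_\KK)$-multi-amalgamation class with $\RR$ supplied by Theorem \ref{theorem:NR} (finite structures with a distinguished linear order): finiteness and $\UU_\KK$-closedness of members, closure under $\UU_\KK$-closed substructures, strong amalgamation, and — the hard part — the locally finite completion property, whose proof will rely on Lemma \ref{lemma:closure0} and Lemma \ref{lemma:closure1} to control the non-unary (binary) algebraic closure coming from the relations $D_{E, E'}$ encoding intersections of equivalence classes. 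But for the corollary as stated, the argument is the short composition above.

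\begin{proof}
This is immediate from the results of this section. Assume the class of $\UU_{\KK}$-closed $\KK$-structures is a Ramsey class. Then by Proposition \ref{proposition:KtoK'}, $\KK'$ is a Ramsey class. Hence by Lemma \ref{lemma:K'toA}, $\vec{\AA}_\Lambda$ is a Ramsey class.
\end{proof}
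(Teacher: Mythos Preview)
Your proposal is correct and matches the paper's approach exactly: the paper presents this corollary with no proof beyond the phrase ``Thus, we have the following,'' since it is the immediate composition of Proposition \ref{proposition:KtoK'} and Lemma \ref{lemma:K'toA}, just as you wrote.
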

	
\section{Ramsey Theorems}
We now use Theorem \ref{theorem:multiamalg} to prove the class of $\UU_\KK$-closed $\KK$-structures is a Ramsey class.

We first consider the downward-closed $\KK_0$-structures (Definition \ref{def: dclosed}).

\begin{lemma} \label{lemma:K0amalg}
The class of downward closed $\KK_0$-structures is a strong amalgamation class.
\end{lemma}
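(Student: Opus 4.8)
The plan is to verify directly the three requirements for a strong amalgamation class: closure under substructures, joint embedding, and strong amalgamation. Closure under $\UU_U$-closed substructures is nearly immediate, but we must check that the downward-closure condition survives: if $K$ is downward closed and $K^*$ is a $\UU_U$-closed substructure, then given $x, y \in K^*$ with $\delta(x,y) = E \vee F$ realized by roots $x, y$ of types $P_{E,1}, P_{F,1}$, the witness $z$ with $P_{E\wedge F, 1}(z)$ and $z \leq_U x, y$ is unique in $K$ (by downward semi-closure, since it is the unique $(E\wedge F)$-class inside both the $E$-class $x$ and the $F$-class $y$), hence it is algebraic over $\{x,y\}$ and therefore lies in any $\UU_U$-closed substructure containing $x$ and $y$. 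So $K^*$ is downward closed. Joint embedding follows from amalgamation once we exhibit a downward-closed structure embedding any single one (e.g. close up the disjoint union appropriately, or simply note it follows from strong amalgamation over the empty structure together with $cl_0$ of Lemma~\ref{lemma:closure0}).

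The heart of the matter is strong amalgamation. Suppose we are given $\KK_0$-structures $B, C_1, C_2$ with embeddings of $B$ into each $C_i$, and (by relabeling) $C_1 \cap C_2 = B$. Form the set-theoretic amalgam $D_0 = C_1 \cup C_2$ over $B$; it already carries the predicates $\set{P_{E,1}}$ and those relations $U_{E,E'}$ inherited from $C_1$ and $C_2$. The task is to extend $\delta$ (equivalently, to add the missing $U_{E, E'}$ relations) to all cross-pairs $x \in C_1 \setminus B$, $y \in C_2 \setminus B$, so that the resulting structure is a downward-closed $\KK_0$-structure, without adding new points. The natural choice, mimicking the amalgamation of $\Lambda$-ultrametric spaces, is to set $\delta(x,y)$ to be the smallest distance compatible with the triangle inequality through $B$, namely $\delta(x, y) = \bigwedge_{b \in B} \bigl(\delta(x, b) \vee \delta(b, y)\bigr)$ (and, if $B = \emptyset$, set $\delta(x,y) = \bbone$ for cross-pairs). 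Concretely this means: for each pair $(x, y)$ one first finds this value $G$, and then one must check there is already a point of $D_0$ representing both $x/G'$ and $y/G'$ for each $G' \geq G$ — which is where one uses that $B$ is downward closed and the coherence/semi-closure axioms, so that the required intersection classes are supplied by $B$ itself rather than needing to be freshly created. Then one sets $U_{E, E'}(x, z)$ and $U_{F, E'}(y, z)$ for the appropriate $z \in B$ whenever $E' \geq G$.

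Having defined the cross-distances, the remaining work is to check all four axioms of $\KK_0$ for $D_0$ plus downward closure. Axioms (1) and (2) are automatic. For coherence (3) and downward semi-closure (4) the only new instances involve cross-pairs; these reduce to computations with joins and meets in the distributive lattice $\Lambda$, using that $\delta$ was defined by the meet-of-joins formula and that $B$ is downward closed, so any "intersection class" demanded by semi-closure among $x \in C_1$, $y \in C_2$ must be an intersection class already present in $B$ (again by downward semi-closure applied inside $B$ together with the fact that $x, y$ can only agree at levels $\geq G$, where they agree with common points of $B$). The triangle inequality for $\delta$ on $D_0$ is Proposition~\ref{proposition:triangle} once the $U$-relations are seen to form a valid $\KK_0$-structure, but one should double-check the mixed triangles $x \in C_1, y \in C_2, c$ — these hold because $\delta(x,y)$ was taken as small as the through-$B$ constraints allow, exactly as in the standard ultrametric amalgamation argument. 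Finally, downward closure of $D_0$: a pair $x, y$ with $\delta(x,y) = E \vee F$ witnessing a failure must, if it is a cross-pair, have $E \vee F = G = \bigwedge_{b}(\delta(x,b)\vee\delta(b,y))$, and one locates the meet class from the corresponding data in $B$, which is downward closed; if it is an intra-$C_i$ pair, downward closure of $C_i$ already supplies the witness. I expect the main obstacle to be precisely this verification that no new points are ever forced — that the meet-of-joins distance is always "achieved" by an existing point — which is where distributivity of $\Lambda$ and the downward-closure hypothesis on the factors are essential, and it is the step most likely to require a careful case analysis; the alternative, cleaner route is to embed each $C_i$ into $\Gamma^{eq}$ (as in Lemma~\ref{lemma:closure0}), amalgamate inside $\Gamma^{eq}$ using the amalgamation property of $\AA_\Lambda$ from Theorem~\ref{theorem:amalg} on the metric parts, take $A_{C_1} , A_{C_2}$ amalgamated over $A_B$ in $\Gamma$, and then pass back through $cl_0$ — reducing everything to the already-established amalgamation of $\Lambda$-ultrametric spaces.
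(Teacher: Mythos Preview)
There are two genuine gaps. First, your closure-under-$\UU_U$-closed-substructures argument is incorrect, and its conclusion is actually false: the closure relations $U_{E,E'}$ point \emph{upward} in $\Lambda$ (each is a function from $P_{E,1}$ to $P_{E',1}$ with $E' > E$), so $\UU_U$-closing a set never produces points at lower levels. The downward-closure witness $z$ lives at level $E \wedge F$, strictly below the levels of $x$ and $y$; its being algebraic over $\{x, y\}$ does not place it in a $\UU_U$-closed substructure. In fact the class of downward-closed $\KK_0$-structures is not closed under $\UU_U$-closed substructures. This does no harm, however, since the lemma asserts only strong amalgamation, and the paper's proof addresses only that.

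Second, your primary amalgamation argument fails for exactly the reason you flagged as the likely obstacle: new points genuinely are forced. Take $\Lambda = \{\bbzero, E, F, \bbone\}$ with $E, F$ incomparable; let $B = \{b\}$ with $P_{\bbone, 1}(b)$, let $C_1 = \{x, b\}$ with $P_{E,1}(x)$ and $x/\bbone = b$, and let $C_2 = \{y, b\}$ with $P_{F,1}(y)$ and $y/\bbone = b$. All three are downward closed. In $D_0 = \{x, y, b\}$ one has $\delta(x,y) = \bbone = E \vee F$, so downward closure demands a point at level $E \wedge F = \bbzero$ below both $x$ and $y$, and none exists in $D_0$ or in $B$. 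The paper's argument is essentially your fallback route, streamlined: form the free amalgam $K^*$ (no cross-factor relations added), check directly that $K^* \in \KK_0$ --- the only nontrivial axiom being downward semi-closure, where one uses that the base is $\UU_U$-closed and downward closed --- and then apply $cl_0$ from Lemma~\ref{lemma:closure0} to obtain a downward-closed strong amalgam. The extra points are supplied by $cl_0$, not by a hand-built completion of $D_0$.
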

\begin{proof}
Let the downward closed $\KK_0$-structures $K_1, K_2$ be the factors of an amalgamation problem, and let $K^*$ be their free amalgam. 
\begin{claim*}
$K^* \in \KK_0$. 
\end{claim*}
\begin{claimproof}
Since the $U_{E, E'}$ are unary, and since both factors and the base are $\UU_U$-closed, $K^*$ is $\UU_U$-closed.

 We only check downward semi-closure, since the other constraints follow immediately from the fact that they are satisfied in each factor. Let $x, y \in K^*$ with $P_{E, 1}(x)$, $P_{F, 1}(y)$, and  $\delta(x, y) = E \join F$. 
 
 If $x, y$ are not in the same factor, then there is no $z \leq_U x, y$, since the base is $\UU_U$-closed. If they are in the same factor, then in each factor there is at most one $z \leq_U x, y$ such that $P_{E \meet F, 1}(z)$, since each factor is downward closed. Thus, the only possible contradiction would be if $x, y$ were in the base, and there were $z_1, z_2$ in the first and second factor, respectively, such that $z_i \leq_U x, y$ and $P_{E \meet F, 1}(z_i)$. But this is impossible, since the base is also downward closed.
 \end{claimproof}

Then $cl_0(K^*)$ as provided by Lemma \ref{lemma:closure0} is a downward closed strong amalgam.
\end{proof}

\begin{lemma} \label{lemma:Kamalg} 
The class of $\UU_{\KK}$-closed $\KK$-structures is a strong amalgamation class.
\end{lemma}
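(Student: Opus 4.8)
The plan is to reduce the amalgamation problem for $\UU_\KK$-closed $\KK$-structures to the amalgamation of downward-closed $\KK_0$-structures already established in Lemma \ref{lemma:K0amalg}, together with a free-amalgamation argument on the ``duplicate'' part of the structure (the points in $P_{E,i}$ for $i \geq 2$ and the bijections $B_{E,i,j}$) and a careful completion of the linear order $<$. Let $\UU_\KK$-closed $\KK$-structures $K_1, K_2$ have a common substructure $K_0^{\cap}$, which is itself a $\UU_\KK$-closed $\KK$-structure. First I would pass to metric parts: let $M_1, M_2$ be the metric parts of $K_1, K_2$ with common substructure $M_0 = $ metric part of $K_0^{\cap}$. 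By Lemma \ref{lemma:K0amalg} these have a downward-closed strong amalgam $M^*$, into which $M_1$ and $M_2$ embed over $M_0$ with disjoint universes apart from $M_0$.

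Next I would build the full $\KK$-structure over $M^*$. Apply Lemma \ref{lemma:Kextension} to $K_1$ with $K_0' = M^*$ to obtain a $\UU_\KK$-closed $\KK$-structure $K_1^+$ whose metric part is $M^*$ and which contains $K_1$; do the same for $K_2$ to get $K_2^+$. The point is that Lemma \ref{lemma:Kextension} adds, for each new point of $P_{E,1}$ in $M^* \setminus M_i$, exactly the forced duplicates in $P_{E,i}$ together with the forced $B$-, $D$-, $D^\exists$-, and $U$-relations, and then completes $<$; since the closure relations $B_{E,i,j}$, $U_{E,E'}$, $D_{E,E'}$ each have a uniquely determined value once their root is present, the duplicate parts of $K_1^+$ and $K_2^+$ over $M^*$ are canonically identified outside of $K_1, K_2$ themselves, so we may form $K^* = K_1^+ \cup K_2^+$ as a set amalgam that is still $\UU_\KK$-closed (every root that appears has its closure present, since it was present in one of $K_i^+$). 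Conditions (1), (2), (4)--(7) of Definition \ref{def:K} are inherited factor-wise (using that $M^*$ is downward closed for the $D$-relations, as in the proof of Lemma \ref{lemma:Kextension}), and $D^\exists$ is correct because it is determined by $U$-relations which live entirely in the metric part.

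The remaining issue is condition (3): $<$ must be a linear order agreeing with $<_{1-\text{types}}$ between $1$-types. On each $P_{E,i}$ we have partial linear orders coming from $K_1$ and $K_2$ (on the respective copies of $P_{E,i}$-points), agreeing on the shared part $P_{E,i} \cap K_0^{\cap}$-points; since within a fixed $1$-type the constraint is just ``linear order'' with no forbidden configurations, any common linear extension of the union works, and the new duplicate points introduced by Lemma \ref{lemma:Kextension} can be slotted in arbitrarily. Between distinct $1$-types $<$ is entirely dictated by $<_{1-\text{types}}$, so no choice is needed. Thus $<$ can be completed to a linear order on $K^*$ consistent with both factors, giving $K^* \in \KK$; and $K^*$ is $\UU_\KK$-closed by construction, with $K_1, K_2$ embedded over $K_0^{\cap}$ with no new identifications, so this is a strong amalgam.

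The main obstacle I anticipate is verifying that the set-theoretic amalgam $K^*$ of $K_1^+$ and $K_2^+$ over $M^*$ is genuinely well-defined and $\UU_\KK$-closed --- i.e. that the duplicate points and closure relations forced over the new metric points of $M^*$ are consistently defined in both $K_1^+$ and $K_2^+$, so that identifying them creates no conflict, and that no root of a closure relation ends up without its (unique) closure point. This is where the precise bookkeeping of how Lemma \ref{lemma:Kextension} acts, and the fact that in $\UU_\KK$-closed structures $B_{E,i,j}$, $U_{E,E'}$, $D_{E,E'}$ are functions with forced values, does the real work; once that is in hand, linearizing $<$ is routine since no nontrivial constraints on $<$ survive within a single $1$-type.
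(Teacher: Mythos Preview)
Your plan reaches the right conclusion but by a longer road than the paper takes, and with one wrinkle worth flagging. The paper argues directly: form the free amalgam $K^*$ of $K_1, K_2$ over the base; its metric part lies in $\KK_0$ by the claim inside Lemma~\ref{lemma:K0amalg}, and since the base is $\UU_\KK$-closed the bijections $B_{E,i,j}$ already patch up correctly on $K^*$. One then adds the forced $D^\exists$-relations and completes $<$ to a linear order respecting $<_{1\text{-types}}$, obtaining $K^+ \in \KK$ (not yet $\UU_\KK$-closed), and a single application of Lemma~\ref{lemma:closure1} gives the $\UU_\KK$-closed strong amalgam $cl(K^+)$. So the paper amalgamates first and closes once at the end, whereas you close the metric part first, extend each factor over it via Lemma~\ref{lemma:Kextension}, and then glue.

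Your route can be made to work, but the step ``form $K^* = K_1^+ \cup K_2^+$'' is not literally a union of $\LL$-structures: Lemma~\ref{lemma:Kextension} completes $<$ arbitrarily on each factor, and those choices will conflict --- e.g.\ on duplicates of points in $M_2 \setminus M_0$, where $K_2$ carries a fixed order but $K_1^+$ chose one freely. You effectively repair this by discarding those choices and re-linearizing from the partial orders on $K_1$ and $K_2$ alone, which is legitimate, but then you are not using Lemma~\ref{lemma:Kextension} as a black box, only its non-order content. (Your phrase ``canonically identified outside of $K_1, K_2$ themselves'' is also slightly off: the duplicates for $x \in M_2 \setminus M_0$ that must be matched with the new ones in $K_1^+$ live \emph{inside} $K_2$.) None of this is fatal, but the paper's ``free amalgam, then one closure'' organization sidesteps all of this bookkeeping.
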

\begin{proof}
Let the $\UU_\KK$-closed $\KK$-structures $K_1, K_2$ be the factors of an amalgamation problem. Let $K^*$ be the free amalgam of $K_1, K_2$, and let $K^*_0$ be the metric part of $K^*$. 

\begin{claim*} We can complete $K^*$ to a $\KK$-structure, $K^+$.
\end{claim*}
\begin{claimproof}
 By the claim in Lemma \ref{lemma:K0amalg}, $K^*_0 \in \KK_0$. Because the base is $\UU_\KK$-closed, $\set{B_{E, i, j}}$ are the graphs of the desired bijections in $K^*$. For any $x, y \in K^*$ such that for some $E, F \in \Lambda$, $P_{E, 1}(x)$, $P_{F, 1}(y)$, and $\delta(x, y) = E \join F$, add the relation $D^\exists(x, y)$. Finally, complete $<$ to a linear order that agrees with $<_{1-types}$ between 1-types (this doesn't conflict with any transitivity constraints). The remaining constraints are satisfied since they are satisfied in each factor. Thus, the resulting structure, $K^+$, is in $\KK$.
 \end{claimproof}

Then $cl(K^+)$ as provided by Lemma \ref{lemma:closure1} is a strong amalgam.
\end{proof}

\begin{lemma} \label{lemma:lfc}
	Let $\RR$ be the class of all finite $\LL$-structures where $<$ is a linear order. Then the class of $\UU_\KK$-closed $\KK$-structures has the locally finite completion property with respect to $(\RR, \UU_\KK)$.
\end{lemma}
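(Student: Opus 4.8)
The plan is to verify the locally finite completion property directly from its definition, using Lemma~\ref{lemma:closure0} (finiteness of the downward closure in $\KK_0$) as the main engine. Fix $B$ a $\UU_\KK$-closed $\KK$-structure and $C_0 \in \RR$. We must produce a bound $n = n(B, C_0)$ so that any $\UU_\KK$-closed $\LL$-structure $C$ with $C_0$ an $\RR$-completion of $C$, and with every substructure of size $\le n$ having a $\KK$-completion, admits a $\KK$-completion with respect to copies of $B$. Since a $\KK$-completion of $C$ with respect to copies of $B$ need only embed each copy of $B$ and land in $\KK$ (the map need not be a global homomorphism-embedding), the work is: (i) repair the metric part of $C$ into a $\KK_0$-structure, (ii) add the $D$-witnesses and duplicated $P_{E,i}$-copies via Lemma~\ref{lemma:Kextension}, and (iii) check the linear order $<$ and the bijections $B_{E,i,j}$ can be completed consistently.

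The first key step is to choose $n$. I would take $n$ large enough to see: the root of every closure relation in $\UU_\KK$ (these have size $\le 2$), every triangle $\{x,y,z\}$ needed to test the triangle inequality for $\delta$ and downward closure (size $3$), and every "coherence" and "downward semi-closure" configuration (size $\le 3$ or $4$); it also must dominate $|B|$ and the finite size of the closure $cl_0(-)$ applied to structures of bounded size. Concretely, because $C$ already has $C_0$ as an $\RR$-completion, the metric information on $C$ is already consistent with a linear order and with the $U$-relations being functions; and because all small substructures of $C$ have $\KK$-completions, every small metric configuration in $C$ satisfies the triangle inequality and the $\KK_0$-axioms (these are all witnessed on $\le 4$ points). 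Hence with $n \ge 4$ the metric part $C_0^{met}$ of $C$ is already a bona fide $\KK_0$-structure. Now apply Lemma~\ref{lemma:closure0} to get the finite downward-closed $cl_0(C_0^{met})$, then apply Lemma~\ref{lemma:Kextension} with this as $K_0'$ to obtain a $\UU_\KK$-closed $\KK$-structure $C'$ containing $C$, with $<$ completed arbitrarily so as to agree with $<_{1-types}$ between $1$-types. This $C'$ is the desired $\KK$-completion: any copy $\widehat B$ of $B$ inside $C$ is a substructure of $C'$, and since $B$ is already $\UU_\KK$-closed and the construction only adds points and relations forced by closure, the inclusion $\widehat B \hookrightarrow C'$ is an embedding.

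The one genuine subtlety — and the step I expect to be the main obstacle — is checking that $C'$ really lies in $\KK$, i.e. that no axiom of Definition~\ref{def:K} is violated by the free/arbitrary completions we performed. The delicate axioms are (5) (the root condition for $D_{E,E'}$: whenever $D^\exists(x_1,x_2)$ and the $U$- and $<$-position is right, a witness $y \le_U x_1, x_2$ with $P_{E_1\meet E_2,1}(y)$ must exist) and the coherence/downward-semi-closure axioms of $\KK_0$. Axiom (5) is exactly what downward closure of $cl_0(C_0^{met})$ buys us: $D^\exists(x_1,x_2)$ forces $\delta(x_1,x_2) \le E_1 \vee E_2$, and combined with $x_i \in P_{E_i,1}$ and the $U$-relations this forces $\delta(x_1,x_2) = E_1 \vee E_2$, so downward closure produces the required $z$; Lemma~\ref{lemma:Kextension} then adds $D_{E_1,E_2}(x_1,x_2,z)$. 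For the $\KK_0$-axioms I would argue that coherence and downward semi-closure are properties that hold in $cl_0(C_0^{met})$ by Lemma~\ref{lemma:closure0}, and are inherited by the subset-plus-duplication structure of $C'$ since those axioms only constrain the $P_{-,1}$-part, which is exactly $cl_0(C_0^{met})$. Finally the linear-order axiom (3) holds because we completed $<$ within each $P_{E,i}$ freely (no cycles, as $<$ was already a partial order compatible with $C_0 \in \RR$) and stacked the $1$-types according to $<_{1-types}$; the bijection axioms (6),(7) hold because the $B_{E,i,j}$-copies were added freshly and coherently in Lemma~\ref{lemma:Kextension}. Assembling these checks completes the verification that the class of $\UU_\KK$-closed $\KK$-structures has the locally finite completion property with respect to $(\RR, \UU_\KK)$.
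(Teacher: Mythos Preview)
Your approach has a genuine gap at the claim that ``with $n \ge 4$ the metric part $C_0^{met}$ of $C$ is already a bona fide $\KK_0$-structure.'' A $\KK$-completion is only a \emph{homomorphism-embedding}: it must be an embedding on irreducible substructures, but may identify points elsewhere. Coherence ($U_{E,E'}(x,y) \wedge U_{E',E''}(y,z) \Rightarrow U_{E,E''}(x,z)$) and downward semi-closure concern configurations that need not be irreducible in $C$. For coherence, the pair $\{x,z\}$ need carry no relation at all (recall $<$ is not total on $C$, only on the $\RR$-completion $C_0$), so $\{x,y,z\}$ is not irreducible; a $\KK$-completion of $\{x,y,z,w\}$, where $w$ is the actual $U_{E,E''}$-successor of $x$ supplied by $\UU_\KK$-closure, may simply collapse $z$ and $w$. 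Thus the hypothesis that all $4$-element substructures have $\KK$-completions does not reflect coherence back to $C$, and Lemma~\ref{lemma:closure0} cannot be invoked on $C_0^{met}$.

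The paper's argument is organized quite differently. It sets $n(B,C_0)=0$ and never uses the small-completion hypothesis. Since only a completion \emph{with respect to copies of $B$} is required, one first replaces $C$ by the union of its copies of $B$, discarding all other points and all relations not witnessed inside a single copy. This reduced $C$ is then only $\UU_\KK$-semi-closed, but one now has the key observation: if $U_{E,E'}(a,b)$ or $B_{E,i,j}(a,b)$ holds and $a$ lies in a copy $\widehat B$, then $b$ lies in the same $\widehat B$, since $\widehat B$ already contains the unique witness and semi-closure forbids a second one. Coherence, downward semi-closure, and the bijection axioms then follow because they hold inside each $\widehat B \in \KK$; downward semi-closure in particular is ruled out by semi-closure for the ternary $D_{E,E'}$. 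After completing $<$ to a linear order convex on the $P_{E,i}$-partition and agreeing with $<_{1-types}$, and adding the forced $D^\exists$-relations, one obtains $C' \in \KK$, and $cl(C')$ from Lemma~\ref{lemma:closure1} is the desired $\UU_\KK$-closed completion. The idea you are missing is precisely this reduction to a union of copies of $B$, which localizes the functional relations to individual copies and thereby forces the $\KK_0$-axioms without any appeal to the local hypothesis.
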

\begin{proof}
	 Let $B$ be a $\UU_\KK$-closed $\KK$-structure and $C_0 \in \RR$. Set $n(B, C_0) = 0$. Let $C$ be a $\UU_{\KK}$-closed $\LL$-structure with $\RR$-completion $C_0$. We first note that it is sufficient to produce $C'$ a $\KK$-completion of $C$ with respect to copies of $B$, since then $cl(C')$ as provided by Lemma \ref{lemma:closure1} will be a $\UU_{\KK}$-closed $\KK$-completion of $C$ with respect to copies of $B$.
	 
	 Since we only need to produce a $\KK$-completion of $C$ with respect to copies of $B$, rather than a $\KK$-completion, we may assume that $C$ is a union of copies of $B$ and that all relations are between points which lie in a common copy of $B$. However, assuming this means we may only assume that $C$ is \textit{$\UU_\KK$-semi-closed}, meaning that the $R$-out-degree, for any closure relation $R$, of a tuple $\vec t$ is at most 1 if $\vec t$ represents an embedding of the root of $R$ into $C$, and 0 otherwise. (We could actually assume full closure for the relations $U_{E, E'}$ and $B_{E, i, j}$, since they represent unary functions, but it will not be necessary.) We claim this places the following constraints on $C$.
	
	\begin{enumerate}
		\item The family $\set{P_{E, i}}$ forms a partition.
		\item $C$ is $\UU_U$-closed.
		\item The  $U_{E, E'}$ are coherent.
		\item $<$ is an irreflexive, asymmetric, acyclic relation. 
		\item $<$ agrees with $<_{1-types}$ between 1-types.
		\item If $U_{E, E'}(x, y)$, then $P_{E, 1}(x)$, $P_{E', 1}(y)$.
		\item If $D^{\exists}(x,y)$, then there are $E, E' \in \Lambda$ such that $P_{E, 1}(x)$, $P_{E', 1}(y)$, and $\delta(x, y) = E \join E'$.
		\item If $D_{E_1, E_2}(x_1, x_2, y)$, then we have $P_{E_1, 1}(x_1)$, $P_{ E_2, 1}(x_2)$, $D^\exists(x_1, x_2)$, $P_{E_1 \meet E_2, 1}(y)$, and $y \leq_U x_1, x_2$.
		\item If $x, y$ are such that $P_{E, 1}(x)$, $P_{E', 1}(y)$, and $D^\exists(x, y)$, then there is exactly one $z$ such that $D_{E, E'}(x, y, z)$.
		\item $B_{E, i, j}$ is the graph of a bijection from $P_{E, i}$ to $P_{E, j}$.
		\item If $B_{E, i, j}(x, y)$ and $B_{E, j, k}(y, z)$, then $B_{E, i, k}(x, z)$.
		\item $C$ is downward semi-closed.
	\end{enumerate}
	With the exceptions of $(3)$, $(4)$, $(10)$, $(11)$, and $(12)$, the constraints are immediate from the assumption that $C$ is $\UU_\KK$-semi-closed, is a union of copies of $B$, and all relations are between points that lie in the common copy of $B$.
	
	 Constraint $(4)$ follows from the assumption that there is a homomorphism-embedding from $C$ to $C_0$, and $<$ is a linear order on $C_0$.
	
	Before continuing, we observe that if $a \in C$ lies in a given copy of $B$ (perhaps one of several), and $U_{E, E'}(a, b)$ or $B_{E, i, j}(a, b)$, then $b$ lies in that same copy of $B$. For suppose $a \in \widehat B$, but $b \not\in \widehat B$. Then, since $\widehat B$ is $\UU_\KK$-closed, there is some $b' \in \widehat B$ such that we also have $U_{E, E'}(a, b')$ (resp. $B_{E, i, j}(a, b')$). But this is forbidden, since $C$ is $\UU_{\KK}$-semi-closed.
	
	We check constraint $(3)$. Suppose $U_{E, E'}(x, y)$, $U_{E', E''}(y, z)$. By our observation, $x, y, z$ all lie in a single copy of $B$, and so $U_{E, E''}(x, z)$. Constraint $(11)$ follows similarly.
	
	Constraint $(10)$ holds since each $B_{E, i, j}$ is a union of bijections, and $C$ is $\UU_\KK$-semi-closed.
	
	Finally, suppose constraint $(12)$ is violated, so there are $x, y \in C$ such that $P_{E, 1}(x)$, $P_{E', 1}(y)$, and $\delta(x, y) = E \join E'$, and there are distinct $z_1, z_2$ such that $P_{E \meet E', 1}(z_i)$ and $z_i \leq_U x, y$. By our earlier observation, we must have $x, y, z_1$ lying in a single copy of $B$, as well as $x, y, z_2$. Since $B$ is $\UU_\KK$-closed, we then have $D_{E, E'}(x, y, z_1)$ and $D_{E, E'}(x, y, z_2)$. But this is forbidden, since $C$ is $\UU_{\KK}$-semi-closed. 

	We define an equivalence relation $P$ on $C$ whose classes are the family $\set{P_{E, i}}$. Taking the transitive closure of $<$ gives a partial order, for which $P$ is a congruence and which agrees with $<_{1-types}$ between $P$-classes, and so $<$ can be completed to a linear order which is $P$-convex and agrees with $<_{1-types}$ between $P$-classes. Thus, after adding the relation $D^\exists$ where appropriate, we can complete $C$ to a $\KK$-structure $C'$.
\end{proof}

\begin{theorem} \label{theorem:KRamsey}
The class of $\UU_\KK$-closed $\KK$-structures is a Ramsey class.
\end{theorem}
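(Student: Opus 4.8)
The plan is to verify the hypotheses of Theorem~\ref{theorem:multiamalg} for the class $\KK = \KK(\UU_\KK)$ of $\UU_\KK$-closed $\KK$-structures, using for $\RR$ the class of all finite $\LL$-structures in which $<$ is a linear order. By Theorem~\ref{theorem:NR}, this $\RR$ is a Ramsey class, so it suffices to check that the class of $\UU_\KK$-closed $\KK$-structures is an $(\RR, \UU_\KK)$-multi-amalgamation class.

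The four conditions to verify are: (1) the structures are finite and $\UU_\KK$-closed, which is immediate by definition of the class; (2) closure under taking $\UU_\KK$-closed substructures, which was noted just after Definition~\ref{def:K}; (3) strong amalgamation, which is exactly Lemma~\ref{lemma:Kamalg}; and (4) the locally finite completion property with respect to $(\RR, \UU_\KK)$, which is exactly Lemma~\ref{lemma:lfc}. Assembling these, Theorem~\ref{theorem:multiamalg} yields that the class of $\UU_\KK$-closed $\KK$-structures is a Ramsey class, which is the statement.

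There is essentially no obstacle left at this point: the real work has already been carried out in the preceding lemmas. The only thing to be careful about is a bookkeeping check that the $\RR$ chosen in Lemma~\ref{lemma:lfc} is literally the same $\RR$ appearing in the statement of Theorem~\ref{theorem:multiamalg} (a Ramsey class of finite irreducible $\LL$-structures) — one should note that $\RR$ as defined is Ramsey by Theorem~\ref{theorem:NR}, and that although its members need not be irreducible, Theorem~\ref{theorem:multiamalg} only requires $\RR$ to be a Ramsey class in whose completions the structures of $\KK$ embed via homomorphism-embeddings, which is guaranteed since every $\UU_\KK$-closed $\KK$-structure is already an $\RR$-structure. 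So the proof is simply: \emph{The class of $\UU_\KK$-closed $\KK$-structures satisfies conditions (1)–(4) of the definition of an $(\RR,\UU_\KK)$-multi-amalgamation class by the remarks above together with Lemmas~\ref{lemma:Kamalg} and~\ref{lemma:lfc}, and $\RR$ is Ramsey by Theorem~\ref{theorem:NR}; hence the conclusion follows from Theorem~\ref{theorem:multiamalg}.}
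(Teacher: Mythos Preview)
Your proposal is correct and follows the same approach as the paper's proof, which simply cites Lemmas~\ref{lemma:Kamalg} and~\ref{lemma:lfc} to establish the $(\RR,\UU_\KK)$-multi-amalgamation property and then invokes Theorems~\ref{theorem:multiamalg} and~\ref{theorem:NR}. One small correction to your commentary: the members of $\RR$ \emph{are} irreducible, since $<$ is a linear order and hence any two distinct points are related by it; so your worry about irreducibility is unfounded, and the hypothesis of Theorem~\ref{theorem:multiamalg} is satisfied on the nose rather than by the workaround you describe.
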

\begin{proof}
By Lemmas \ref{lemma:Kamalg} and \ref{lemma:lfc}, the class of $\UU_\KK$-closed $\KK$-structures is an $(R, \UU_K)$-multi-amalgamation class, where $\RR$ is the class of finite $\LL$-structures where $<$ is interpreted as a linear order. Thus, by Theorems \ref{theorem:multiamalg} and \ref{theorem:NR}, it is a Ramsey class.
\end{proof}

\begin{theorem} \label{theorem:mainthm}
Let $\Lambda$ be a finite distributive lattice, $\AA_\Lambda$ the class of all finite $\Lambda$-ultrametric spaces, and $\vec \AA_\Lambda$ a well-equipped lift of $\AA_\Lambda$. Then $\vec \AA_\Lambda$ is a Ramsey class.
\end{theorem}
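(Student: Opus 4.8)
The plan is to obtain the theorem immediately from the two principal results established above: Theorem~\ref{theorem:KRamsey} asserts that the class of $\UU_\KK$-closed $\KK$-structures is a Ramsey class, and Corollary~\ref{lemma:KtoA} asserts that this already implies $\vec\AA_\Lambda$ is a Ramsey class. So the proof proper will consist of citing these two statements; the actual work sits in setting them up, and the paragraphs below recall where that work lives and which piece is delicate.

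For the transfer from $\KK$ to $\vec\AA_\Lambda$ I would route through the intermediate class $\KK'$. First, Proposition~\ref{proposition:KtoK'}, an application of the general forgetting lemma (Lemma~\ref{lemma:forget}), lets the Ramsey property descend along the quantifier-free retraction $f_{\Phi_<}$ from $\UU_\KK$-closed $\KK$-structures onto $\KK'$: in a $\KK'$-structure the full linear order $<$ is recoverable from the inductively defined formulas $\psi_{E,i}$ together with the restrictions of $<$ to the parts $P_{E,i}$ with $E$ meet-irreducible, so discarding the redundant part of $<$ is harmless. Then Lemma~\ref{lemma:K'toA} turns a Ramsey witness $C=\Lift(\vec C)$ in $\KK'$ into a witness $\vec C$ in $\vec\AA_\Lambda$: since $\Lift$-structures carry a linear order they are rigid, each copy of $\Lift(\vec A)$ inside $C$ has a canonical kernel $ker(\widehat A)$, and a coloring of $\binom{\vec C}{\vec A}$ pulls back along the maps $L^{opp}$ to a coloring of $\binom{C}{\Lift(\vec A)}$ whose monochromatic $\Lift(\vec B)$ projects to a monochromatic $\vec B$.

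Theorem~\ref{theorem:KRamsey} itself I would prove by feeding the Hubi\v{c}ka--Ne\v{s}et\v{r}il machinery (Theorem~\ref{theorem:multiamalg}) the Ramsey class $\RR$ of all finite $\LL$-structures in which $<$ is a linear order (Ramsey by Theorem~\ref{theorem:NR}), together with the closure description $\UU_\KK$. This requires checking two hypotheses: that the $\UU_\KK$-closed $\KK$-structures have strong amalgamation (Lemma~\ref{lemma:Kamalg}, via free amalgamation followed by the closure operator $cl$ of Lemma~\ref{lemma:closure1}), and that they satisfy the locally finite completion property (Lemma~\ref{lemma:lfc}, where in fact the completion bound $n(B,C_0)$ can be taken to be $0$, since once one only needs to complete with respect to copies of $B$ the structure $C$ is already a union of copies of $B$ with all relations internal to a single copy).

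The hard part, and the reason this does not fall out of existing Ramsey theorems for free products or for chains of equivalence relations, is that the algebraic closure in the \fraisse limit is \emph{non-unary}: the intersection of an $E$-class with an $E'$-class is a new imaginary, encoded here by the ternary relations $D_{E,E'}$. I would tame this not by computing the closure operator by hand but through Lemma~\ref{lemma:closure0}, which shows the relevant closure of a finite set is finite by identifying it with the algebraic closure of imaginaries inside the generic $\Lambda$-ultrametric space $\Gamma$, the \fraisse limit of the amalgamation class $\AA_\Lambda$ (Theorem~\ref{theorem:amalg}). This is precisely the step I expect to be the main obstacle, and it is what justifies the entire detour through $\KK_0$ and $\KK$.
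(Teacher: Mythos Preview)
Your proposal is correct and matches the paper's own proof essentially line for line: the paper's argument is simply ``By Theorem~\ref{theorem:KRamsey} and Corollary~\ref{lemma:KtoA}, we are done,'' and your expanded discussion of how those results are established (the quantifier-free retraction to $\KK'$, the transfer via $\Lift$ and rigidity, the $(\RR,\UU_\KK)$-multi-amalgamation verification, and the finiteness of closure via Lemma~\ref{lemma:closure0}) accurately recapitulates the supporting architecture.
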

\begin{proof}
By Theorem \ref{theorem:KRamsey} and Corollary \ref{lemma:KtoA}, we are done.
\end{proof}

\begin{corollary} \label{corollary:RamseyPermutations}
The amalgamation classes corresponding to all the homogeneous finite-dimensional permutation structures constructed in \cite{Braun}*{Proposition 3.10} are Ramsey classes.

In particular, for every finite distributive lattice $\Lambda$, there is a Ramsey class such that $\Lambda$ is isomorphic to the lattice of $\emptyset$-definable equivalence relations in the class's \fraisse limit.
\end{corollary}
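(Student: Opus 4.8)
The plan is to reduce the Corollary to Theorem \ref{theorem:mainthm}. Write $\Gamma_B$ for a homogeneous structure produced in \cite{Braun}*{Proposition 3.10}. I would show that $\Gamma_B$ is interdefinable with the \fraisse limit $\vec\Gamma$ of a suitable well-equipped lift $\vec\AA_\Lambda$ of $\AA_\Lambda$, invoke Theorem \ref{theorem:mainthm} to conclude $\vec\AA_\Lambda$ is a Ramsey class, and then transfer the Ramsey property across the interdefinability to the age of $\Gamma_B$. The ``in particular'' clause then follows because \cite{Braun}*{Proposition 3.10} produces, for each finite distributive $\Lambda$, such a $\Gamma_B$ whose lattice of $\emptyset$-definable equivalence relations is isomorphic to $\Lambda$; alternatively it follows by applying Theorem \ref{theorem:mainthm} directly to any well-equipped lift of $\AA_\Lambda$, for instance $\vec\AA_\Lambda^{min}$, whose \fraisse limit manifestly has $\Lambda$ as its lattice of $\emptyset$-definable equivalence relations.

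Recall that $\Gamma_B$ is the generic $\Lambda$-ultrametric space expanded by finitely many linear orders $<_1,\dots,<_k$, where each $<_j$ is generic subject to being convex with respect to a prescribed chain $\CC_j\colon E^j_1 > \cdots > E^j_{m_j}$ of meet-irreducibles, every meet-irreducible of $\Lambda$ lying in at least one $\CC_j$ (with an additional provision when $\bbzero$ is meet-reducible). Carrying out the language shift of Section 3 --- as in Examples 1 and 2 and as licensed by Theorem \ref{theorem:amalg} --- I would replace each $<_j$ by the family of subquotient orders it induces layer-by-layer along $\CC_j$: for each consecutive pair in $\CC_j$, the subquotient order with bottom relation $E^j_{\ell+1}$ and top relation $E^j_\ell$ obtained from $<_j$ by descending to the quotient by $E^j_{\ell+1}$ and restricting to $E^j_\ell$-classes; a subquotient order with bottom relation $E^j_1$ and top relation $\bbone$; and, when $\bbzero$ is meet-irreducible, a subquotient order with bottom relation $\bbzero$ and top relation $E^j_{m_j}$ ordering the points inside each minimal class. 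Each of these has a meet-irreducible bottom relation and is generic in the sense of Theorem \ref{theorem:amalg}; and since every meet-irreducible of $\Lambda$ occurs in some $\CC_j$ while no meet-reducible element is used as a bottom relation (when $\bbzero$ is meet-reducible it is simply omitted as a bottom relation, matching the special provision in \cite{Braun}), the resulting language $L$ is $\Lambda$-well-equipped.

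Conversely, each $<_j$ is recovered from this family by iterated composition, $<_j = {<_{E^j_1,\bbone}}\big[\cdots\big[{<_{\bbzero,E^j_{m_j}}}\big]\cdots\big]$, using the composition and restriction operations of Section 3 and --- where $\bbzero$ is meet-reducible, so the innermost factor is not in $L$ --- Lemma \ref{lemma:movesqo} and Lemma \ref{lemma:sqoproduct} to define the needed subquotient orders with bottom relation $\bbzero$; this is precisely the construction in the Proposition following Corollary \ref{corollary:extendsqo}. Hence $\Gamma_B$ and the \fraisse limit $\vec\Gamma$ of the well-equipped lift determined by $L$ are interdefinable. By Theorem \ref{theorem:mainthm}, that lift is a Ramsey class; to transfer this to the age of $\Gamma_B$, note that interdefinable homogeneous structures in finite relational languages have ages related by a pair of mutually inverse quantifier-free reinterpretations (using elimination of quantifiers), so one runs the argument of Lemma \ref{lemma:forget} in both directions to move a coloring between the two ages --- equivalently, the two structures have the same (rigid) topological automorphism group, whose extreme amenability governs the Ramsey property of either age. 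Thus the amalgamation class corresponding to $\Gamma_B$ is a Ramsey class.

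The step I expect to be the main obstacle is the interdefinability itself, and within it two points: first, verifying that the subquotient orders obtained by slicing the generic convex linear orders of \cite{Braun} are genuinely generic, so that $\vec\Gamma$ really is their \fraisse limit (this is where one leans on Theorem \ref{theorem:amalg} together with a comparison of the defining constraints); and second, spelling out the dictionary between the special case of the \cite{Braun} construction when $\bbzero$ is meet-reducible and the well-equipped-lift formalism, in which $\bbzero$ never occurs as a bottom relation and the relevant orderings within minimal meet-irreducible classes are supplied definably by Lemmas \ref{lemma:movesqo} and \ref{lemma:sqoproduct}. Once the interdefinability is established, the remaining invocations of Theorem \ref{theorem:mainthm} and of the transfer principle behind Lemma \ref{lemma:forget} are routine.
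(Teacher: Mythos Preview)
Your proposal is correct and follows essentially the same approach as the paper: the paper's proof simply asserts that all such classes ``are representable as well-equipped lifts'' of $\AA_\Lambda$ for the appropriate $\Lambda$, then invokes Theorem \ref{theorem:mainthm}, and cites \cite{Braun}*{Theorem 3.1} for the second part. Your argument unpacks the same reduction in more detail---spelling out the slicing of convex linear orders into subquotient orders, the reverse composition, and the transfer of the Ramsey property across interdefinability---which the paper compresses into a single clause.
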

\begin{proof}
For the first part, all such classes are representable as well-equipped lifts of the class of all $\Lambda$-ultrametric spaces for some finite distributive $\Lambda$.

For the second part, by \cite{Braun}*{Theorem 3.1}, for every such $\Lambda$, there is a homogeneous finite-dimensional permutation structure with lattice of equivalence relations isomorphic to $\Lambda$.
\end{proof}

\section{The Expansion Property}
We now identify a Ramsey lift of $\AA_\Lambda$ with the expansion property as defined below, and use this to compute the universal minimal flow of $Aut(\Gamma)$, where $\Gamma$ is the \fraisse limit of $\AA_\Lambda$. The following definitions and theorem are from \cite{NVT}, extending the work of \cite{KPT}.

\begin{definition}
Given an $L$-structure $F$, we let $\text{Age}(F)$ denote the set of all $L$-structures isomorphic to a finite substructure of $F$.
\end{definition}

\begin{definition}
Let $L$ be a language, and $L^*$ a countable relational expansion of $L$. Let $F$ be a homogeneous $L$-structure. Then an $L^*$-expansion $F^*$ of $F$ is \textit{precompact} if any $A \in \text{Age}(F)$ has only finitely many $L^*$-expansions in $\text{Age}(F^*)$.
\end{definition}

\begin{definition}
Let $F$ be a homogeneous structure, and $F^*$ a precompact relational expansion of $F$. Then $\text{Age}(F^*)$ has the \textit{expansion property} relative to $\text{Age}(F)$ if for every $A \in \text{Age}(F)$ there exists a $B \in \text{Age}(F)$ such that, for any $L^*$-expansions $A^*$ of $A$ and $B^*$ of $B$ in $\text{Age}(F^*)$, $A^*$ embeds into $B^*$.
\end{definition}

\begin{definition}
Let $L$ be a language, and $L^* = L \cup \set{R_i}_{i \in I}$ be a relational expansion. Let $a(i)$ be the arity of $R_i$. Given a homogeneous $L$-structure $F$, we define $P^*$ as
$$P^* = \prod_{i \in I}\set{0, 1}^{F^{a(i)}}$$

We may define a group action of $\text{Aut}(F)$ on a given factor as follows: for $g \in \text{Aut}(F)$ and $S_i \in F^{a(i)}$, $g \cdot S_i(y_1, ..., y_{(a_i)}) \Leftrightarrow S_i(g^{-1}(y), ..., g^{-1}(y_{a(i)}))$.

Finally, we may define the \textit{logic action} of $\text{Aut}(F)$ on $P^*$ as given by applying the above action componentwise.
\end{definition}

\begin{theorem} [\cite{NVT}*{Theorems 4, 5}] \label{theorem:NVT}
Let $L$ be a language, $L^* = L \cup \set{R_i}_{i \in I}$ be a countable relational expansion, and $F$ a homogeneous $L$-structure. Let $F^* = (F, \vec R^*)$ be a precompact $L^*$-expansion of $F$. Then we have the following equivalence, with the closure taken in $P^*$.
\begin{enumerate}
\item The logic action of $Aut(F)$ on $\overline{Aut(F) \cdot \vec R^*}$ is minimal.
\item $\text{Age}(F^*)$ has the expansion property relative to $\text{Age}(F)$.
\end{enumerate}

Furthermore, the following are also equivalent.

\begin{enumerate}
\item The logic action of $Aut(F)$ on $\overline{Aut(F) \cdot \vec R^*}$ is its universal minimal flow.
\item $\text{Age}(F^*)$ has the Ramsey property and the expansion property relative to $\text{Age}(F)$.
\end{enumerate}
\end{theorem}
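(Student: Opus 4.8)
The plan is to obtain Theorem~\ref{theorem:NVT} from the Kechris--Pestov--Todorcevic correspondence \cite{KPT} in the form extended to precompact expansions. Write $G = \text{Aut}(F)$ and $H = \text{Aut}(F^*)$, so that $H$ is precisely the stabilizer of the point $\vec R^*$ for the logic action of $G$ on $P^*$. Precompactness of $F^*$ is what makes the coset space $G/H$ precompact, so that its completion $\widehat{G/H}$ is a compact $G$-flow and the orbit map $gH \mapsto g\cdot\vec R^*$ extends to a $G$-homeomorphism of $\widehat{G/H}$ onto $X^* := \overline{G\cdot\vec R^*}$. It also yields the combinatorial dictionary I rely on: since $F$ is homogeneous, for $\vec R' \in P^*$ one has $\vec R' \in X^*$ iff $\text{Age}(F,\vec R') \subseteq \text{Age}(F^*)$, and $\vec R^* \in \overline{G\cdot\vec R'}$ iff $\text{Age}(F^*) \subseteq \text{Age}(F,\vec R')$; each is proved by matching $\vec R'$ with translates $g\cdot\vec R^*$ on larger and larger finite substructures of $F$.

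For the first equivalence, I would note that since $\vec R^*$ has dense orbit in $X^*$, the flow $X^*$ is minimal iff $\vec R^* \in \overline{G\cdot\vec R'}$ for every $\vec R' \in X^*$, i.e. iff every $\vec R' \in X^*$ satisfies $\text{Age}(F,\vec R') = \text{Age}(F^*)$, and then show this is equivalent to the expansion property. For one direction, given the expansion property and $\vec R'\in X^*$, any $A^*\in\text{Age}(F^*)$ with reduct $A$ has a witness $B$ such that every expansion of $B$ in $\text{Age}(F^*)$ contains $A^*$; applying this to the expansion of $B$ induced by $\vec R'$, which lies in $\text{Age}(F,\vec R')\subseteq\text{Age}(F^*)$, gives $A^*\in\text{Age}(F,\vec R')$. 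For the converse, if the expansion property fails for some $A$ then, using precompactness to reduce to a single bad expansion $A^*$ of $A$ and a compactness (finite-intersection) argument over an exhaustion $F=\bigcup_n B_n$, one builds $\vec R'\in X^*$ whose age omits $A^*$, so $X^*$ is not minimal.

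For the ``furthermore'' part, I would combine this with the KPT theorem itself. If $\text{Age}(F^*)$ has the Ramsey property then $H$ is extremely amenable; then for any minimal $G$-flow $Y$, an $H$-fixed point $y_0\in Y$ yields a continuous $G$-map $G/H\to Y$, $gH\mapsto gy_0$, hence a $G$-map $\widehat{G/H}\cong X^*\to Y$ whose image, being nonempty, closed and $G$-invariant in the minimal $Y$, is all of $Y$; so $X^*$ maps onto every minimal flow, and if additionally the expansion property holds then $X^*$ is itself minimal by the first part, hence is the universal minimal flow. Conversely, if $X^* = M(G)$ then $X^*$ is minimal, so the expansion property holds; and to recover the Ramsey property one shows $H$ is extremely amenable by taking an arbitrary $H$-flow $Y$, inducing it to a $G$-flow (possible since $G/H$ is precompact), mapping $M(G)=X^*$ onto a minimal subflow of the induced flow, composing with the canonical $G$-map from the induced flow back to $\widehat{G/H}=X^*$, and using coalescence of the universal minimal flow to conclude the resulting endomorphism is an automorphism, which forces $Y$ to have an $H$-fixed point; then KPT converts extreme amenability of $H$ into the Ramsey property of $\text{Age}(F^*)$.

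I expect the main obstacle to be the pair of compactness arguments underlying the combinatorial dictionary and its reformulation as the expansion property, since this is where a purely topological statement about orbit closures in $P^*$ must be converted into the finitary expansion property, and where precompactness must be used carefully so that $X^*$ contains no expansion not already realized in $\text{Age}(F^*)$. The direction deducing the Ramsey property from $X^* = M(G)$ is the other delicate point, as it needs the induced-flow construction and a coalescence property of $M(G)$ rather than the abstract dictionary alone.
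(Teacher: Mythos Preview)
The paper does not prove this theorem: it is quoted from \cite{NVT} (Theorems 4 and 5) and used as a black box to deduce Theorem~\ref{theorem:UMF}, with no argument given in the present paper. So there is no ``paper's own proof'' to compare your proposal against.

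That said, your sketch is a faithful outline of the Nguyen Van Th\'e argument itself: identifying $\overline{G\cdot\vec R^*}$ with the completion $\widehat{G/H}$ via precompactness, translating minimality of that flow into the expansion property through the age dictionary, and combining this with extreme amenability of $H$ (via KPT) plus the induced-flow/coalescence argument to handle the second equivalence. One small point to be careful about if you actually carry this out: applying KPT to $H=\mathrm{Aut}(F^*)$ requires knowing that $F^*$ is itself homogeneous (equivalently, that the expansion is \emph{reasonable} in the sense of \cite{NVT}), which is part of the standing hypotheses in that paper but is not stated explicitly in the version quoted here.
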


\begin{proposition}
Suppose $F^*$ is a precompact expansion of a homogeneous structure $F$, and for every $A^* \in \text{Age}(F^*)$, there is a $B \in \text{Age}(F)$ such that for any expansion $B^* \in \text{Age}(F^*)$ of $B$, $A^*$ embeds into $B^*$. Then $\text{Age}(F^*)$ has the expansion property relative to $\text{Age}(F)$.
\end{proposition}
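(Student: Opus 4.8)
The statement is essentially a reformulation of the expansion property, trading the two-sided quantification over expansions of $A$ for a one-sided quantification where we start from a fixed expansion $A^*$. The plan is to reduce the definition's requirement to the hypothesis by exploiting precompactness, which guarantees that each $A \in \text{Age}(F)$ has only finitely many expansions in $\text{Age}(F^*)$.

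First I would fix an arbitrary $A \in \text{Age}(F)$ and enumerate its finitely many $L^*$-expansions in $\text{Age}(F^*)$ as $A_1^*, \dots, A_k^*$; precompactness is exactly what makes this list finite. For each $j \in \{1, \dots, k\}$, apply the hypothesis to $A_j^*$ to obtain a structure $B_j \in \text{Age}(F)$ such that every expansion $B_j^* \in \text{Age}(F^*)$ of $B_j$ admits an embedding of $A_j^*$. Next I would amalgamate the $B_j$ into a single $B \in \text{Age}(F)$ into which every $B_j$ embeds: since $F$ is homogeneous, $\text{Age}(F)$ has the joint embedding property (it is an amalgamation class), so such a $B$ exists.

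It then remains to verify that this $B$ witnesses the expansion property for $A$. So let $A^*$ be any $L^*$-expansion of $A$ in $\text{Age}(F^*)$ and let $B^*$ be any $L^*$-expansion of $B$ in $\text{Age}(F^*)$. By construction $A^* = A_j^*$ for some $j$, and $B_j$ embeds into $B$; restricting $B^*$ along this embedding yields an expansion $B_j^*$ of $B_j$ which is still in $\text{Age}(F^*)$ (ages are closed under substructure), so by the defining property of $B_j$ we get an embedding $A^* = A_j^* \into B_j^* \into B^*$, as required.

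The only genuinely delicate point is ensuring that the amalgamated $B$ stays in $\text{Age}(F)$ and that restrictions of $B^*$ remain in $\text{Age}(F^*)$; both follow from general \fraisse theory ($\text{Age}$ is always hereditary, and homogeneity gives amalgamation hence joint embedding), so there is no real obstacle — the argument is a bookkeeping exercise in quantifier order made finite by precompactness. One minor subtlety worth stating explicitly is that we should note $\text{Age}(F^*)$ is itself an age (hereditary with joint embedding) so that the finitely many expansions $A_j^*$ and the restricted $B_j^*$ genuinely lie in it; this is immediate from $F^*$ being a structure and $\text{Age}$ being closed under finite substructures.
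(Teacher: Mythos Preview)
Your proof is correct and follows essentially the same approach as the paper: enumerate the finitely many expansions of $A$ using precompactness, apply the hypothesis to each, and use joint embedding in $\text{Age}(F)$ to produce a single witness $B$. The paper's version is simply more terse, omitting the explicit verification that restricting $B^*$ to the copy of $B_j$ yields an expansion in $\text{Age}(F^*)$ into which $A_j^*$ embeds.
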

\begin{proof}
Let $A \in \text{Age}(F)$, and enumerate the expansions of $A$ in $\text{Age}(F^*)$, which there are finitely many of by precompactness, as $(A^*_i)_{i=1}^n$. For each $i \in [n]$, let $B_i \in \text{Age}(F)$ be the structure provided by hypothesis for $A^*_i$. By the joint embedding property, let $B \in \text{Age}(F)$ embed every $B_i$. Then $B$ witnesses the expansion property for $A$.
\end{proof}

\begin{definition}
Let $\Lambda$ be a finite distributive lattice. Enumerate the meet-irreducibles as $E_i$, and for each $i$, let $E_i^+$ cover $E_i$. Then $\vec \AA_\Lambda^{\min}$ is the class of all expansions of elements of $\AA_\Lambda$ by a single subquotient order for each $i$, with bottom relation $E_i$ and top relation $E_i^+$.
\end{definition}

We first reduce the desired expansion property to one which will be easier to prove.

\begin{definition}
 Let $\KK'^{\min}$ be the closure under $\UU_\KK$-closed substructure of structures of the form $\Lift(\vec A)$ for some $\vec A \in \vec \AA_\Lambda^{\min}$. 
 
 Let $\KK'^{\min}_r$ be the reduct of $\KK'^{\min}$ forgetting the order.
 \end{definition}

\begin{lemma} \label{lemma:expansiontransfer}
Suppose $\KK'^{min}$ has the expansion property relative to $\KK'^{min}_r$. Then $\vec \AA_\Lambda^{\min}$ has the expansion property relative to $\AA_\Lambda$.
\end{lemma}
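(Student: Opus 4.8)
The plan is to transfer the expansion property along the $\Lift$ construction, exploiting that the order-forgetting reduct of $\Lift(\vec A)$ depends only on the metric part $A$. Write $r$ for the reduct forgetting $<$. The first step is to observe that $r(\Lift(\vec A))$ is, up to isomorphism, independent of the chosen subquotient orders on $A$: the subquotient orders enter Definitions~\ref{def:prelift} and~\ref{def:lift} only through the definition of $<$ (via the formulas $\phi_{E,i}$), whereas the closure relations $U_{E,E'}$, $B_{E,i,j}$, $D_{E,E'}$ of $\UU_\KK$ and the closure operation of Lemma~\ref{lemma:closure0} never refer to $<$. Hence there is a well-defined operation $\Lift_r$ on $\AA_\Lambda$ with $\Lift_r(A) = r(\Lift(\vec A)) \in \KK'^{\min}_r$ for every expansion $\vec A \in \vec\AA_\Lambda^{\min}$ of $A$ (and every $A \in \AA_\Lambda$ admits such an expansion, since the required subquotient orders can always be chosen on a finite structure; this also makes both expansions precompact, a finite structure carrying only finitely many linear orders and only finitely many subquotient orders of the prescribed sorts).

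Given $A \in \AA_\Lambda$, set $K_r = \Lift_r(A)$ and let $L_r \in \KK'^{\min}_r$ be a witness for $K_r$ furnished by the hypothesized expansion property of $\KK'^{\min}$ relative to $\KK'^{\min}_r$. By definition of $\KK'^{\min}$, there are $\vec B_0 \in \vec\AA_\Lambda^{\min}$ and a $\UU_\KK$-closed substructure $K$ of $\Lift(\vec B_0)$ with $L_r \cong r(K)$; let $B$ be the metric part of $\vec B_0$. I claim $B$ witnesses the expansion property of $\vec\AA_\Lambda^{\min}$ relative to $\AA_\Lambda$ for $A$.

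So let $\vec A$ and $\vec B$ be expansions in $\vec\AA_\Lambda^{\min}$ of $A$ and of $B$ respectively; we must embed $\vec A$ into $\vec B$. By well-definedness of $\Lift_r$ there is an isomorphism $\theta \colon r(\Lift(\vec B_0)) \to r(\Lift(\vec B))$, and $L := \theta(K)$, equipped with the order inherited from $\Lift(\vec B)$, is a $\UU_\KK$-closed substructure of $\Lift(\vec B)$ (closedness involves only $U$, $B$, $D$, not $<$), so $L \in \KK'^{\min}$ and $r(L) \cong L_r$; thus $L$ is an $\LL$-expansion in $\KK'^{\min}$ of a copy of $L_r$. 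On the other side, $\Lift(\vec A) \in \KK'^{\min}$ is an $\LL$-expansion of $K_r = r(\Lift(\vec A))$. Applying the witness property of $L_r$ (along the isomorphism $r(L)\cong L_r$) gives an embedding $\Lift(\vec A) \hookrightarrow L$, and composing with $L \hookrightarrow \Lift(\vec B)$ yields an embedding $e \colon \Lift(\vec A) \hookrightarrow \Lift(\vec B)$. Finally, $e$ is pushed down to an embedding $\vec A \hookrightarrow \vec B$ exactly as in the proof of Lemma~\ref{lemma:K'toA}: since $e(\Lift(\vec A)) \in \binom{\Lift(\vec B)}{\Lift(\vec A)}$, we have $ker(e(\Lift(\vec A))) \subseteq ker(\Lift(\vec B)) = L_\KK(\vec B)$, and applying $L_{L_\KK(\vec B)}^{opp}$ to $ker(e(\Lift(\vec A))) \cong L_\KK(\vec A)$ produces a substructure of $\vec B$ isomorphic to $\vec A$.

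I expect the main obstacle to be the first step together with the final descent: one must check carefully that $r \circ \Lift$ genuinely factors through the metric part — in particular that the algebraic-closure step of Lemma~\ref{lemma:closure0} yields the same $\KK_0$-skeleton no matter which subquotient orders are present — and that an arbitrary embedding of $\Lift$-structures, not only one induced by an inclusion $\vec A \subseteq \vec B$, respects kernels, so that the operation $L^{opp}$ applies. Both facts are already essentially present in Sections~4 and~5, so the genuinely new content of the lemma is the routing: lift $A$ to $\Lift_r(A)$, apply the hypothesis, and read the witness off the metric part of the resulting structure.
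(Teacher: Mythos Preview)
Your proof is correct and follows the same high-level strategy as the paper's: lift the problem to $\KK'^{\min}$, apply the hypothesis there, and descend. The implementations differ in a few places, though. The paper uses the one-sided form of the expansion property (the Proposition preceding the definitions): it fixes an already-expanded $\vec A \in \vec\AA_\Lambda^{\min}$, takes a $\KK'^{\min}_r$-witness $B$ for $\Lift(\vec A)$, and sets the $\AA_\Lambda$-witness to be $A_{B_0}$ from Definition~\ref{def:AK} applied to the metric part $B_0$ of $B$. An expansion of $A_{B_0}$ then induces an expansion $\vec B \in \KK'^{\min}$ of $B$ directly via Proposition~\ref{proposition:representation}, and the descent is the explicit map $x \mapsto x_A$. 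Your version instead uses the two-sided expansion property, which forces you to establish that $r\circ\Lift$ factors through the metric part (your $\Lift_r$); you then enlarge the $\KK'^{\min}_r$-witness to some $r(\Lift(\vec B_0))$ and take the metric part $B$ of $\vec B_0$ as the $\AA_\Lambda$-witness, transferring orders via the reduct isomorphism $\theta$ and descending via the $ker$/$L^{opp}$ machinery of Lemma~\ref{lemma:K'toA}. The paper's route is shorter because fixing $\vec A$ up front avoids the $\Lift_r$ well-definedness check and the passage through an ambient $\Lift(\vec B_0)$; your route is a bit more systematic in that it reuses the descent from Lemma~\ref{lemma:K'toA} verbatim rather than introducing a separate $x\mapsto x_A$ argument.
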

\begin{proof}




Let $\vec A \in \vec \AA_\Lambda^{\min}$. By assumption, there is a $B \in \KK'^{min}_r$ witnessing the expansion property for $\Lift(\vec A)$. Let $B_0$ be the metric part of $B$, and consider $A_{B_0} \in \AA_\Lambda$. We claim $A_{B_0}$ witnesses the expansion property for $\vec A$.

Let $\vec A_{B_0} \in \vec \AA_\Lambda^{\min}$ be an arbitrary expansion of $A_{B_0}$. This induces an expansion $\vec B \in \KK'^{min}$ of $B$, such that $\vec A_{\vec B}$, as defined in Proposition \ref{proposition:representation}, is isomorphic to $\vec A_{B_0}$. 

Then $\vec B$ embeds $\Lift(\vec A)$, and therefore embeds $L_\KK(\vec A)$. Composing the natural injection of $\vec A$ into $L_\KK(\vec A)$ with the embedding of $L_\KK(\vec A)$ into $\vec B$, and composing the result with the map from $\vec B$ to $\vec A_{B_0}$ sending $x$ to $x_A$, gives an embedding of $\vec A$ into $\vec A_{B_0}$.
\end{proof}

We now use a standard argument (see \cite{digraphs}*{Theorem 8.6} for example, although it appeared earlier) to prove the expansion property for the lifted classes

\begin{lemma} \label{lemma:expansion}
$\KK'^{min}$ has the expansion property relative to $\KK'_r$.
\end{lemma}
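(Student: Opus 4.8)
The plan is to deduce the expansion property from the Ramsey property by the standard argument of \cite{digraphs}*{Theorem 8.6}. The first task is to record that $\KK'^{min}$ is itself a Ramsey class: since $\vec\AA_\Lambda^{min}$ is a well-equipped lift of $\AA_\Lambda$ — one subquotient order per meet-irreducible $E_i$, with bottom relation $E_i$ and top relation a cover $E_i^+$ — every step of the transfer machinery of the preceding sections specializes to it, so the class of $\UU_\KK$-closed $\KK$-structures built from the parameters attached to $\vec\AA_\Lambda^{min}$ is Ramsey by Theorem \ref{theorem:KRamsey}, and hence its quantifier-free retract $\KK'^{min}$ is Ramsey by Lemma \ref{lemma:forget}, exactly as in Proposition \ref{proposition:KtoK'}. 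I also note that $\KK'^{min}$ is a precompact expansion of $\KK'^{min}_r$ — a $\KK'^{min}_r$-structure $A$ is determined by its finite metric part, and by Definition \ref{def:K'} the order of any $\KK'^{min}$-expansion of $A$ is recovered from the fixed formulas $\psi_{E,i}$ together with the restriction of $<$ to the finitely many finite sets $P_{E,i}$ with $E$ meet-irreducible, so $A$ has only finitely many expansions — that $\KK'^{min}$ has joint embedding (transferred from $\vec\AA_\Lambda^{min}$ through the functoriality of $\Lift$, Propositions \ref{prop:liftfunc} and \ref{proposition:representation}) and hence amalgamation by \cite{Nes}, and that it is reasonable (using Lemma \ref{lemma:Kextension}).

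The argument then proceeds as follows. Fix $A^* \in \KK'^{min}$ with reduct $A \in \KK'^{min}_r$ and enumerate the finitely many $\KK'^{min}$-expansions of $A$ as $A^* = A^*_1, \dots, A^*_k$. Using joint embedding, choose $D^* \in \KK'^{min}$ embedding each $A^*_i$, carefully enough that no $\KK'^{min}$-order on the reduct $D$ of $D^*$ can collapse all the copies of $A$ in $D$ to a single expansion type distinct from that of $A^*$; this is where the analysis of \cite{digraphs}*{Theorem 8.6} is adapted. Applying the Ramsey property of $\KK'^{min}$ once for each $A^*_i$, obtain $C^* \in \KK'^{min}$ such that every $k$-coloring of the copies of $A$ inside $C^*$ admits a copy $\widehat D^* \in \binom{C^*}{D^*}$ that is monochromatic on copies of $A$ of each fixed type (under the order of $C^*$). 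Let $B$ be the reduct of $C^*$. Then $B$ witnesses the expansion property for $A^*$: given any expansion $B^* \in \KK'^{min}$ of $B$, color each copy of $A$ in $B$ by the $\KK'^{min}$-type that $B^*$ induces on it, produce $\widehat D^*$ as above, and conclude from the choice of $D^*$ that some copy of $A$ inside $\widehat D^*$ receives the type of $A^*$, i.e.\ $A^* \hookrightarrow B^*$. Finally, the argument of the Proposition stated just after Theorem \ref{theorem:NVT} upgrades ``a witness for every $A^* \in \KK'^{min}$'' to the expansion property of $\KK'^{min}$ relative to $\KK'^{min}_r$.

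The main obstacle is precisely the choice of $D^*$ and the corresponding last inference: a Ramsey expansion need not have the expansion property, so the argument genuinely has to use that $\vec\AA_\Lambda^{min}$, and hence $\KK'^{min}$, is as economical as possible — so that the $\KK'^{min}$-type of a copy of $A$ under an arbitrary compatible order is not freely choosable but is forced once enough of the order is pinned down. This is what guarantees that $D^*$ can be taken so that any $\KK'^{min}$-order on its reduct which is monochromatic on same-type copies of $A$ must agree with the order of $D^*$ up to unavoidable symmetries, and in particular must still realize $A^*$. I expect this to be handled exactly as in the cited standard argument; once it is in place, the remaining bookkeeping — running the coloring argument inside the language $\LL$ — is routine.
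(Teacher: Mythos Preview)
Your overall strategy is right and matches the paper's: $\KK'^{min}$ is Ramsey (via Theorem \ref{theorem:KRamsey} and Lemma \ref{lemma:forget}), and the expansion property is then extracted by a colouring argument in the style of \cite{digraphs}*{Theorem 8.6}. But the step you label ``the main obstacle'' and then defer --- the choice of $D^*$ and the reason a monochromatic copy must realise $A^*$ --- is precisely the content of the proof, and your sketch of it does not work as written.

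Two specific problems. First, you colour copies of the \emph{reduct} $A$ by their $\prec$-type and look for a $\widehat D^*$ that is monochromatic on each $<$-type class. Nothing prevents the resulting map from $<$-types to $\prec$-types from missing $A^*$ entirely; your promise that $D^*$ was chosen ``carefully enough'' is exactly what needs to be proved, and you give no construction. Second, embedding \emph{all} expansions $A^*_1,\dots,A^*_k$ into $D^*$ is not what the standard argument does and is not what drives the conclusion.

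The paper's proof is concrete where yours is vague. It exploits the specific minimality of $\vec\AA_\Lambda^{min}$: by Definition \ref{def:K'}, an order in $\KK'^{min}$ is completely determined by its restrictions to pairs $x,y\in P_{E_i,1}$ with $\delta(x,y)=E_i^+$, one meet-irreducible $E_i$ at a time. So the paper colours not copies of $A$ but copies of the two-point ``pair'' structures $\vec p_i$ (two $E_i$-classes inside a common $E_i^+$-class), with colour recording whether $\prec$ agrees with $<$ on that pair. Iterating Ramsey over $i=1,\dots,n$ yields a copy of $\vec C_0$ on which, for each $i$, either $\prec_i={<_{E_i,1}}$ or $\prec_i={<_{E_i,1}^{opp}}$. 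Correspondingly $\vec C_0$ is chosen to contain not all expansions of $A$, but the $2^n$ structures $\vec A_I=\Lift(\vec B_I)$ obtained by reversing the subquotient orders indexed by $I\subseteq[n]$; whichever pattern of agreements/reversals $\prec$ exhibits, the matching $\vec A_I$ supplies the embedding of $\vec A$. This is the missing idea in your write-up: identify the pair structures $\vec p_i$ as the objects to colour, and the $2^n$ reversals as the only ``unavoidable symmetries''.
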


\begin{proof}
Let $\vec A \in \KK'^{\min}$. Enumerate the meet-irreducibles in $\Lambda$ as $(E_i)_{i=1}^n$, and let $E_i^+$ cover $E_i$. For each $i$, consider the structure on $\set{x_1, x_2, x_3}$, such that
\begin{enumerate}
 \item $P_{E_i, 1}(x_1)$, $P_{E_i, 1}(x_2)$, $P_{E_i^+, 1}(x_3)$ \item $x_1, x_2 \leq_U x_3$
 \item $x_1 < x_2$
 \item The remaining order information is determined by $<_{1-types}$.
 \end{enumerate}
  Let $\vec p_i \in \KK'^{\min}$ be the minimal $\UU_\KK$-closed substructure of the \fraisse limit of $\KK'^{min}$ containing the above structure.
 
 By possibly enlarging $\vec A$, we may assume it has the form $\Lift(\vec B)$ for some $\vec B \in \vec \AA_\Lambda^{min}$. For each $i \in [n]$, let $<_{E_i}$ be the unique subquotient order on $\vec B$ from $E_i$ to $E_i^+$. For each $I \subset [n]$, let $\vec B_I$ be $\vec B$, but with $<_{E_i}$ reversed for every $i \in I$, and let $\vec A_I \cong \Lift(\vec B_I)$.

 Let $\vec C_0 \in \KK'^{min}$ embed $\vec A_I$ for every $I \subset [n]$, and let $\vec C_{i+1} \rightarrow (\vec C_i)^{\vec p_{i+1}}_2$ for every $i \in [n]$. Let $C_n \in \KK'_r$ be the reduct of $\vec C_n$. We will show $C_n$ witnesses the expansion property for $\vec A$.

Let $(C_n, \prec) \in \KK'^{min}$ be an expansion of $C_n$. For each $i \in [n]$, let $\prec_i$ be the restriction of $\prec$ to pairs $x, y \in P_{E_i, 1}$ such that $\delta(x, y) = E_i^+$, let $<_{E_i, 1}$ be the corresponding restriction of $<$, and let $\chi_i$ be a coloring of $\binom {\vec C_n}{\vec p_i}$ defined by
$$\chi_i(\vec p_i) =  \begin{cases} 
      0 & x_2 \prec_i x_1 \\
      1 & x_1 \prec_i x_2 \\
   \end{cases}$$
Then iterated applications of the Ramsey property yield $\vec C_0^* \subset \vec C_n$, a copy of $\vec C_0$ in which, for every $i \in [n]$, either $\prec_i = <_{E_i, 1}$, or $\prec_i = <_{E_i, 1}^{opp}$. Let $I \subset [n]$ be such that $\prec_i = <_{E_i, 1}^{opp}$ iff $i \in I$. We have that $\vec C_0^*$ contains a copy $\vec A_I^*$ of $\vec A_I$. Letting ${A_I}^*$ be the reduct of $\vec A_I^*$ to $\KK'_r$, we have $\vec A \cong \left(A_I^*, \set{\prec_i}_{i = 1}^n \right)$.
\end{proof}

\begin{theorem} \label{theorem:UMF}
 Let $\Lambda$ be a finite distributive lattice, $\AA_\Lambda$ the class of all finite $\Lambda$-ultrametric spaces, $\Gamma$  the \fraisse limit of $\AA_\Lambda$, and $\vec \Gamma^{min} = {(\Gamma, (<_{E_i})_{i=1}^n)}$ the \fraisse limit of $\vec \AA_\Lambda^{min}$. Then 
 \begin{enumerate}
 \item $\vec \AA_\Lambda^{\min}$ is a Ramsey class and has the expansion property relative to $\AA_\Lambda$.
 \item The logic action of $\text{Aut}(\Gamma)$ on $\overline{\text{Aut}(\Gamma) \cdot (<_{E_i})_{i=1}^n}$ is the universal minimal flow of $\text{Aut}(\Gamma)$.
 \end{enumerate}
\end{theorem}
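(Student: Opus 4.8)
The plan is to derive both parts from machinery already assembled in the paper, with Part (1) doing essentially all the work and Part (2) following formally from Theorem \ref{theorem:NVT}. For Part (1), the Ramsey property for $\vec \AA_\Lambda^{\min}$ is immediate: $\vec \AA_\Lambda^{\min}$ is a well-equipped lift of $\AA_\Lambda$ (the subquotient orders are exactly indexed by the meet-irreducibles, each with meet-irreducible bottom relation), so Theorem \ref{theorem:mainthm} applies directly. For the expansion property relative to $\AA_\Lambda$, I would invoke Lemma \ref{lemma:expansiontransfer}, which reduces the task to showing that $\KK'^{\min}$ has the expansion property relative to $\KK'^{\min}_r$; but that is precisely Lemma \ref{lemma:expansion}. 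So Part (1) is assembled by citing Theorem \ref{theorem:mainthm}, Lemma \ref{lemma:expansiontransfer}, and Lemma \ref{lemma:expansion}.

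For Part (2), I would first check that $\vec \Gamma^{\min} = (\Gamma, (<_{E_i})_{i=1}^n)$ is a precompact relational expansion of $\Gamma$: each $A \in \AA_\Lambda$ has only finitely many expansions to a structure in $\vec \AA_\Lambda^{\min}$, since on a finite $\Lambda$-ultrametric space there are finitely many choices of subquotient order from $E_i$ to $E_i^+$ for each of the finitely many $i$. Then Theorem \ref{theorem:NVT} applies with $F = \Gamma$ and $F^* = \vec \Gamma^{\min}$: by Part (1), $\text{Age}(\vec \Gamma^{\min}) = \vec \AA_\Lambda^{\min}$ has both the Ramsey property and the expansion property relative to $\text{Age}(\Gamma) = \AA_\Lambda$, and therefore the logic action of $\text{Aut}(\Gamma)$ on $\overline{\text{Aut}(\Gamma) \cdot (<_{E_i})_{i=1}^n}$ is its universal minimal flow.

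There is one small bookkeeping point worth making explicit. Theorem \ref{theorem:NVT} is stated for a homogeneous $L$-structure $F$ and an expansion, and we are implicitly identifying $\text{Age}(\Gamma)$ with $\AA_\Lambda$ and $\text{Age}(\vec \Gamma^{\min})$ with $\vec \AA_\Lambda^{\min}$; the latter identification is exactly the statement that the Fra\"\i ss\'e limit of $\vec \AA_\Lambda^{\min}$ has age $\vec \AA_\Lambda^{\min}$ and reduct $\Gamma$, which holds because $\vec \AA_\Lambda^{\min}$ is an amalgamation class (being a well-equipped lift) with the appropriate reduct. Given that, no genuine obstacle remains: the entire content has been front-loaded into Theorem \ref{theorem:mainthm} and Lemma \ref{lemma:expansion}, and this final theorem is their packaging.

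\begin{proof}
For $(1)$, the class $\vec \AA_\Lambda^{\min}$ is a well-equipped lift of $\AA_\Lambda$: its language has exactly one subquotient order with bottom relation $E$ and distinct top relation for each meet-irreducible $E$, and none for meet-reducible $E$. Hence by Theorem \ref{theorem:mainthm}, $\vec \AA_\Lambda^{\min}$ is a Ramsey class. For the expansion property, by Lemma \ref{lemma:expansion} the class $\KK'^{\min}$ has the expansion property relative to $\KK'^{\min}_r$, and so by Lemma \ref{lemma:expansiontransfer}, $\vec \AA_\Lambda^{\min}$ has the expansion property relative to $\AA_\Lambda$.

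For $(2)$, note that $\vec \Gamma^{\min}$ is a precompact expansion of $\Gamma$: a finite $\Lambda$-ultrametric space admits only finitely many subquotient orders with bottom relation $E_i$ and top relation $E_i^+$, and there are finitely many $i$, so each $A \in \AA_\Lambda$ has only finitely many expansions in $\vec \AA_\Lambda^{\min}$. Since $\text{Age}(\Gamma) = \AA_\Lambda$ and $\text{Age}(\vec \Gamma^{\min}) = \vec \AA_\Lambda^{\min}$, part $(1)$ gives that $\text{Age}(\vec \Gamma^{\min})$ has the Ramsey property and the expansion property relative to $\text{Age}(\Gamma)$. By the second equivalence in Theorem \ref{theorem:NVT}, the logic action of $\text{Aut}(\Gamma)$ on $\overline{\text{Aut}(\Gamma) \cdot (<_{E_i})_{i=1}^n}$ is the universal minimal flow of $\text{Aut}(\Gamma)$.
\end{proof}
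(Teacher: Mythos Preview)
Your proposal is correct and follows essentially the same route as the paper's own proof: invoke Theorem \ref{theorem:mainthm} for the Ramsey property, combine Lemmas \ref{lemma:expansion} and \ref{lemma:expansiontransfer} for the expansion property, and then apply Theorem \ref{theorem:NVT} for the universal minimal flow. Your version is slightly more explicit in checking that $\vec \AA_\Lambda^{\min}$ is a well-equipped lift and that the expansion is precompact, but these are routine verifications and the overall argument is identical.
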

\begin{proof}
By Theorem \ref{theorem:mainthm}, $\vec \AA_\Lambda^{\min}$ is a Ramsey class. By Lemmas \ref{lemma:expansiontransfer} and \ref{lemma:expansion}, $\vec \AA_\Lambda^{\min}$ has the expansion property relative to $\AA_\Lambda$. The second part then follows by Theorem \ref{theorem:NVT}.
\end{proof}

\begin{bibdiv}
\begin{biblist}

\bibselect{Bib}

\end{biblist}
\end{bibdiv}

\end{document}